\numberwithin{equation}{section}
\newcommand{\RR}{\mathbf{R}}
\newcommand{\EE}{\varepsilon}
\newcommand{\Ss}{\mathbb{S}}
\newcommand{\Div}{\textnormal{div }}
\newcommand{\supp}{\textnormal{supp }}
\newcommand{\axi}{\textnormal{axi}}
\newcommand{\essup}{\textnormal{essup}}
\newcommand{\lin}{\textnormal{lin}} 
\newcommand{\BMO}{\textnormal{BMO}} 
\newcommand{\bydef}{\overset{def}{=}}
\newcommand{\norm}[1]{\left\Vert #1\right\Vert}
\newtheorem{Theo}{Theorem}[section]
\newtheorem{lem}[Theo]{Lemma}
\newtheorem{prop}[Theo]{Proposition}
\theoremstyle{plain}
\theoremstyle{definition}
\newtheorem{defi}[Theo]{Definition}
\theoremstyle{remark}
\newtheorem{Rema}[Theo]{Remark}
\newtheorem*{rema*}{Remark}
\author[A. Hanachi]{Adalet Hanachi}
\address{LEDPA, Universit\'e de Batna --2--\\ Facult\'e des Math\'ematiques et Informatique\\ D\'epartement de Math\'ematiques\\ 05000 Batna Alg\'erie}
\email{a.hanachi@univ-batna2.dz}
\author[H. Houamed]{Haroune Houamed}
\address{Department of Mathematics, New York University in Abu Dhabi, Saadiyat Island \\
P.O. Box 129188, Abu Dhabi, United Arab Emirates }
\email{haroune.houamed@nyu.edu}
\author[M. Zerguine]{Mohamed Zerguine}
\address{LEDPA, Universit\'e de Batna --2--\\ Facult\'e des Math\'ematiques et Informatique\\ D\'epartement de Math\'ematiques\\ 05000 Batna Alg\'erie}
\email{m.zerguine@univ-batna2.dz}
\date{}
\subjclass{35Q35; 35B33; 35Q30.}
\keywords{Boussinesq equations, Axisymmetric solutions, Biot-Savart law, Global well-posedness, Critical spaces, Measure theory.}
\thanks{$\bullet$The authors would like to thank Taoufik Hmidi for the fruitful discussion on the subject. }
\thanks{$\bullet$The second author would like to thank Gaëtan Cane for the helpful discussion on Measure theory.}
\begin{document}

\title[Inviscid limit]
{Remarks on the global well-posedness of the  axisymmetric Boussinesq system with rough initial data}

\begin{abstract}This work concerns the global well-posedness problem for the 3D axisymmetric viscous Boussinesq system with critical rough initial data. More precisely, we aim to extending our recent result \cite{Hanachi-Houamed-Zerguine} to the case of initial data of measure type.  To this end, we should first develop some notions of axisymmetric measures in a general context, then, in the spirit of \cite{Gallay-Sverak}, we prove the global wellposedness result provided that the atomic parts of the initial measures are small enough.
\end{abstract}
\maketitle
\tableofcontents

\section{Introduction} 
\subsection{Model and epitome of results} In geophysical fluid dynamics, density variations may arise at low speeds due to the changes in temperature or humidity like in atmosphere, or salinity as in oceans which give rise to buoyancy forces. The effect of these density changes can be expressive even if the fractional change in density is small. The Boussinesq approximation retains density variations in gravity term  responsible for the buoyancy effect but disregards them in the inertial term. This provides a set of equations which are well-known as the Boussinesq system  
\begin{equation}\label{B(mu,kappa)}
\left\{ \begin{array}{ll} 
\partial_{t}v+v\cdot\nabla v-\mu\Delta v+\nabla p=\rho \vec e_z & \textrm{$x\in\RR^3,\quad t\in(0,\infty)$,}\\
\partial_{t}\rho+v\cdot\nabla \rho-\kappa\Delta \rho=0 & \textrm{$x\in\RR^3,\quad t\in(0,\infty)$,}\\ 
\Div v=0, &\\ 
({v},{\rho})_{| t=0}=({v}_0,{\rho}_0). \tag{B$_{\mu,\kappa}$}
\end{array} \right.
\end{equation} 
Usually, $v(t,x)$ refers to the distribution of the fluid velocity localized in $x\in\RR^3$ at a time $t\in(0,\infty)$ with free-divergence, the scalar function $\rho(t,x)$ designates the mass density in the modeling of geophysical fluids and $p(t,x)$ is the force of internal pressure. The non--negative parameters $\mu$ and $\kappa$ represent respectively the kinematic viscosity and molecular diffusivity of the fluid which can be seen as the inverse of Reynolds numbers. On the other hand, the term $\rho\vec{e}_z$ models the influence of the buoyancy force in the fluid motion with respect to the vertical direction $\vec{e}_z=(0,0,1)$.

\hspace{0.5cm}The system \eqref{B(mu,kappa)} strongly ubiquitous in the mathematics community either theoretically or experimentally because it arises in many phenomena like thermal convection, dynamic of geophysical fluids, and optimal mass transport topic, e.g. \cite{Brenier,Ped}.  


\hspace{0.5cm}For better understanding, we embark with Navier-Stokes equations which is coming from \eqref{B(mu,kappa)} when the initial density is constant. The outcome system reads as follows
\begin{equation}\label{NS(mu)}
\left\{ \begin{array}{ll} 
\partial_{t}v+v\cdot\nabla v-\mu\Delta v+\nabla p=0 & \textrm{$x\in\RR^3,\quad t\in(0,\infty)$,}\\
\Div v=0, &\\ 
v_{| t=0}=v_0. \tag{NS$_\mu$}
\end{array} \right.
\end{equation} 
The mathematical issue of the global solutions with finite energy is due to J. Leray in \cite{Leray} who proved that if $v_0\in L^2(\RR^3)$, then, the system \eqref{NS(mu)} admits at one global solution 
\begin{equation*}
v\in L^\infty\big((0,\infty);L^2(\RR^3)\big)\cap L^2\big((0,\infty);\dot H^1(\RR^3)\big).
\end{equation*}
The uniqueness of such solutions remains open, unless in the two-dimensional case. The second main feature of the \eqref{NS(mu)} is the fact that it is invariant under the transformation 
\begin{equation}\label{Sym}
v(t,x)\mapsto v_\lambda(t,x)\triangleq \lambda v(\lambda^2 t,\lambda x).
\end{equation}
 In other words, if $v$ is a solution of \eqref{NS(mu)} on $[0,T]$ with initial data $v_0$, then $v_\lambda$ is a solution on $[0,\lambda^{-2}T]$ with intial data $v_{0_{\lambda}}\triangleq \lambda v_0(\lambda \cdot)$. The first result positive result of \eqref{NS(mu)} in scaling-invariant spaces goes back to Kato \cite{Kato} who proved the local well-posedness of this system for any initial data $v_0\in L^N(\RR^N)$. Yet, the extension of such solutions, globally in time, was asserted only for sufficiently small data in such space. This result was widened lately by a numerous authors in several larger scaling spaces with respect to the following chain embedding 
\begin{equation}\label{Sym2}
L^N(\RR^N)\hookrightarrow \dot B_{p,r}^{-1+\frac{N}{p}}\hookrightarrow \dot B_{\widetilde p,\widetilde r}^{-1+\frac{N}{\widetilde p}}\hookrightarrow\BMO^{-1}   
\end{equation}
for $2\le p\le \widetilde{p}<\infty$ and $2\le r\le \widetilde r\le\infty$. 

\hspace{0.5cm}Let us mention that the topic of blow-up in finite time of smooth solutions with large initial data of \eqref{NS(mu)} is still now not knwon, expect in some partial situation. Actually, Chemin and {\it al.} investigated in series of references \cite{Chemin-Gallaghere0,Chemin-Gallaghere1} that \eqref{NS(mu)} is in fact global in time where initial data which are not small in any critical space but satisfies some structure like oscillations or slow variations in one direction. For a huge litterature about this subject, we refer to \cite{Canone-Planchon,Koch-Tataru,Meyer,Weissler} and references therein.  

\hspace{0.5cm}The question that arises more legitimately is: can we extend the local solution we mentioned above to be a global one? In two dimensional case, the response is positive whenever the initial data belongs to any class from \eqref{Sym2}. For higher dimension, the positive answer remains true as long as the initial data satisfies a suitable smallness condition. Thus, let us shed a light on the 2D case  where the key factor is that the vorticity boils down to scalar function $\omega=\partial_1 v^2-\partial_2v^1$. Through \eqref{NS(mu)}, $\omega$ satisfies the following evolution equation 
\begin{equation}\label{Vort-2d}
\left\{ \begin{array}{ll} 
\partial_{t}\omega+v\cdot\nabla \omega-\mu\Delta \omega =0 & \textrm{$x\in\RR^2,\quad t\in(0,\infty)$,}\\
\omega_{| t=0}={\omega}_0.
\end{array} \right.
\end{equation} 
such that, the corresponding velocity $v$ is recuperated by the so-called {Biot-Savart law}, that is
\begin{equation*}
v(t,x)=\frac{1}{2\pi}\int_{\RR^2}\frac{(x-y)^\perp}{|x-y|^2}\omega(t,x)dx,
\end{equation*}
where $x^\perp=(-x_2,x_1)$. Through scaling invariance \eqref{Sym}, the convenient transformation of the vorticity  is given by
\begin{equation*}
\omega(t,x)\mapsto \lambda^2\omega(\lambda^2 t,\lambda x).
\end{equation*}
An elementary calculus shows that $L^1(\RR^2)$ and $\mathscr{M}(\RR^2)$ (where $\mathscr{M}(\RR^2)$ is the space of Radon measures with finite mass) are  critical spaces for \eqref{Vort-2d}. 

\hspace{0.5cm}As a result, Cottet in \cite{Cottet}, and independently Giga, Miyakawa, and Osada in \cite{giga-miyakawa-osada} have granted a global result when the initial vorticity $\omega_0$ belongs to $\mathscr{M}(\RR^2)$. To reach, however, the uniqueness which is very hard for an arbitrary initial data in $\mathscr{M}(\RR^2)$ they explored a Gronwall-type argument, showing in \cite{giga-miyakawa-osada} that uniqueness involves that the atomic part of $\omega_0$ is small enough. The interpretation of \cite{giga-miyakawa-osada} that the size requirement only entails the atomic part of the measure coming from the pivotal estimate
\begin{equation*} 
\limsup_{t\rightarrow0}t^{1-\frac1p}\|e^{t\Delta}\mu\|_{L^p}\le C_p\|\mu_{pp}\|_{\mathscr{M}(\RR^2)},\quad p\in(1,\infty],
\end{equation*}
with $\|\mu_{pp}\|_{\mathscr{M}(\RR^2)}$ refers to the total variation of the atomic part of $\mu\in\mathscr{M}(\RR^2)$. This latter result was early enhanced by Gallagher and Gallay in \cite{Gallagher-Gallay}, where they established that if \mbox{$\omega_0\in\mathscr{M}(\RR^2)$}, there exists a unique solution $\omega\in C\big((0,\infty);L^1\cap L^\infty\big)$ so have $\|\omega(t,\cdot)\|_{L^1}\le\|\omega_0\|_{\mathscr{M}(\RR^2)}$ and demonstrated also that such solution is in fact continuously dependent on initial data, deducing that Navier-Stokes equations is globally well-posed in 2D case. For large literature we refer the reader to \cite{PG}.

\hspace{0.5cm}For 3D Navier-Stokes equations the classical paradigm \`a la Leray and \`a la Kato remains valid. In terms of vorticity $\omega=\nabla\times v$, the situation is very worse because of the additional term $\omega\cdot\nabla v$ in $\omega$'s equation. In other words, we have 
\begin{equation}\label{Vort-3d}
\left\{ \begin{array}{ll} 
\partial_{t}\omega+v\cdot\nabla \omega-\mu\Delta \omega =\omega\cdot\nabla v & \textrm{$x\in\RR^3,\quad t\in(0,\infty)$,}\\
\omega_{| t=0}={\omega}_0.
\end{array} \right.
\end{equation} 
The appearance of the term $\omega\cdot\nabla v$ is due to the higher dimension and is often referred to as the vorticity {\it stretching term}. Note that for 2D case, we have $\omega\cdot\nabla v\equiv0$, we immediately deduce for $t\ge0$ that $\|\omega(t)\|_{L^p}\le\|\omega_0\|_{L^p}$ for $p\in[1,\infty]$. According to Beale, Kato and Majda criterion \cite{BKM} this latter boundedness is the main tool to achieve global well-posedness, controlling $\omega$ in $L^1_{loc}(\RR_{+}; L^\infty)$. Unhappily, for 3D case, this criterion breaks down because   of the {\it stretching term} which  is one of the main sources of difficulties in the well-posedness theory of 3D Navier Stokes. There
are partial results in the case of the so-called axisymmetric flows without swirl. We say that a vector field $v$ is axisymmetric if it has the form:

\begin{equation}\label{Eq:1}
v(t, x)=v^{r}(t, r, z)\vec{e}_{r}+v^{z}(t, r, z)\vec{e}_{z},
\end{equation}
where, $(r,\theta,z)$ refers to the cylindrical coordinates in $\RR^3$, defined by setting $x=(r\cos\theta,r\sin\theta,z)$ with $0\le\theta<2\pi$ and the triplet  $(\vec{e}_{r}, \vec{e}_{\theta}, \vec{e}_{z})$ indicates the usual frame of unit vectors in the radial, toroidal and vertical directions with the notation
\begin{equation*}
\vec{e}_r=\Big(\frac{x_1}{r},\frac{x_2}{r},0\Big),\quad \vec{e}_{\theta}=\Big(-\frac{x_2}{r},\frac{x_1}{r},0\Big),\quad \vec{e}_{z}=(0,0,1).
\end{equation*}
The formula \eqref{Eq:1} allows us to reduce the expression of the vorticity to a scalar function \mbox{$\omega\triangleq\omega_{\theta}\vec{e}_{\theta}$}, with \mbox{$\omega_{\theta}=\partial_{z}v^r-\partial_{r}v^z$}. In this case, the stretching term $\omega\cdot\nabla v$ closes to $\frac{v^r}{r}\omega$, in particular the time evolution of $\omega_{\theta}$ reads as follows  
\begin{equation}\label{omega(theta)}
\partial_{t}\omega_{\theta}+(v\cdot\nabla)\omega_{\theta}-\mu\Delta\omega_{\theta}+\mu\frac{\omega_{\theta}}{r^2}=\frac{v^r}{r}\omega_{\theta},
\end{equation}
with the notation $v\cdot\nabla=v^{r}\partial_{r}+v^{z}\partial_{z}$ and $\Delta=\partial_{r}^{2}+\frac1r\partial_{r}+\partial_{z}^{2}$. Moreover, it easy to check that the quantity $\Pi=\frac{\omega_{\theta}}{r}$  obeys the following transport--diffusion equation
\begin{equation}\label{Pi}
\partial_{t}\Pi+v\cdot\nabla \Pi-\mu\Big(\Delta +\frac{2}{r}\partial_r\Big)\Pi=0,\quad\Pi_{| t=0}=\Pi_0.  
\end{equation}
The first remark, however, we are able to distinguish is that $\Pi$ satisfies almost the same equation as $\omega$ in 2D. Thus, the axisymmetric structure is considered in some sense as a reduction of dimension. Second, an $L^p-$estimate for \eqref{Pi} gives 
\begin{equation}\label{Pi-L-p}
\|\Pi(t)\|_{L^p}\le \|\Pi_0\|_{L^p}, \quad p\in[1,\infty]. 
\end{equation}
This estimate offered a good framework to M. Ukhoviskii and V. Yudovich \cite{uy}, and independently O. Ladyzhenskaya \cite{Ladyzhenskaya} to claim that \eqref{NS(mu)} admits a global and unique solution as long as $v_0\in H^1$ and $\omega_0,\frac{\omega_0}{r}\in L^2\cap L^\infty$. This latter weakened next by S. Leonardi, J. M\`alek, J. Nec\u as and M. Pokorn\'y in \cite{LMNP} once $v_0\in H^2$ and relaxed recently in \cite{a} by H. Abidi  for $v_0\in H^{\frac{1}{2}}$. Further, in terms of the vorticity for rough initial data, the system \eqref{NS(mu)} has tackled by many authors. It sould be emphisized that in \cite{Feng-Sverak}, H. Feng and V. Sver\'ak recently settled a global result in time for \eqref{NS(mu)}, in a particular case that the initial vorticity $\omega_0$ is supported on a circle. This result was developed lately by Th. Gallay and V. Sver\'ak \cite{Gallay-Sverak} in the more general case. Especially they proved that \eqref{NS(mu)} is globally well-posed in time whenever $\omega_0\in L^1(\Omega)$ with $\Omega=\{(r,z)\in\RR^2: r>0\}$ is a half-space equipped with the measure $drdz$. Such result was enlarged by the same authors to this case where $\omega_{0}$ is only a finite measure with small atomic part (see \cite{Gallay-Sverak}).  More recently, the same authors proved in \cite{Gallay-Sverak1} that the atomic part of the initial measure can be taken arbitrary large for stemming from circular vortex filaments, i.e $\omega_0= \gamma \delta_{\bar{r},\bar{z}}$, with $\gamma,\bar{r}>0$ and $z\in \mathbb{R}.$ However, the wellposedness in the general case of arbitrary measures is still open.
 
\hspace{0.5cm} As regards to 3D Boussinesq system \eqref{B(mu,kappa)}, the well-posedness subject-matter has considerably explored. In fact, Danchin and Paicu revisited in \cite{Danchin-Paicu} the solutions \`a la Leray and \`a la Fujita-Kato for \eqref{B(mu,kappa)} in the case where $\kappa=0$ and demonstrated that solutions are global and unique in time under smallness condition. In the same way, Abidi-Hmidi-Keraani \cite{Abidi-Hmidi-Keraani} asserted that \eqref{B(mu,kappa)} admits a unique global solution in axisymmetric setting  
as long as $v_0\in H^1(\RR^3),\;\Pi_0\in L^{2}(\RR^3), \rho_0\in L^2\cap L^\infty$ with $\supp\rho_{0}\cap(Oz)=\emptyset$ and $P_{z}(\supp\rho_0)$ is a compact set in $\RR^3$ to prohibit the violent singularity $\frac{\partial_r\rho}{r}$, with $P_{z}$ being the orthogonal projector over $(Oz)$. In the same fashion, this problem has been considered by Hmidi-Rousset in \cite{Hmidi-Rousset} for $\kappa\geqslant 0$. First, they declined the assumption on the support of the density. Second, they benefited from the coupling phenomena between the vorticity and the density by invoking a new unknown $\Gamma=\Pi-\frac{\rho}{2}$ which satisfies the equation
\begin{equation*}
\partial_t\Gamma+v\cdot\nabla\Gamma-\Big(\Delta+\frac{2}{r}\partial_r\Big)\Gamma=0,\quad\Gamma_{|t=0}=\Gamma_0.
\end{equation*}
It is easily seen that $\Gamma$ cheeks the same role as $\Pi$ for the Navier-Stokes system \eqref{NS(mu)}. The main interest of $\Gamma$ is to derive by a simple way a priori estimates for $\Pi$. More recently, the second an third authors conducted a new result in the sense that exploited the axisymmetric structure on the velocity  and the crititical regularity \`a la Fujita-Kato to assert that \eqref{B(mu,kappa)}, for $\kappa=0$, possesses a unique global solution as long as and $(v_0,\rho_0)\in H^{\frac12}\cap\dot B^{0}_{3,1}\times L^2\cap \dot B^{0}_{3,1} $. Finally, in the case $\kappa=\mu>0$, the authors succeed lately in \cite{Hanachi-Houamed-Zerguine} to perform a new result of global well-posedeness for \eqref{B(mu,kappa)} in the setting that $(\omega_0,\rho_0)$ is axisymmetric and belonging to the critical Lebesgue spaces $L^1(\Omega)\times L^1(\RR^3)$ in the spirit of \cite{Gallay-Sverak} concerning Navier-Stokes equations \eqref{NS(mu)}.

\subsection{Aims}The current paper occupies with a topic of the global well-posedness for the system \eqref{B(mu,kappa)} given under vorticity-density formulation\footnote{For simplicity, we take $\kappa=\mu=1$. However, we should mention that our arguments hold also for $\kappa=\mu>0.$}
\begin{equation}\label{VD}
\left\{ \begin{array}{ll} 
\partial_{t}\omega_{\theta}+v\cdot\nabla \omega_{\theta}-\frac{v^r}{r}\omega_{\theta}-\big(\Delta-\frac{1}{r^2}\big)\omega_{\theta}=-\partial_r\rho, & \\
\partial_{t}\rho+v\cdot\nabla \rho- \Delta\rho=0 & \\ (\omega_\theta,{\rho})_{| t=0}=({\omega}_0,{\rho}_0). \end{array} \right.
\end{equation}
We aim to extending our result latterly established in \cite{Hanachi-Houamed-Zerguine} to the larger class of initial data of measure type, i.e, for $(\omega_0,\rho_0)\in \mathscr{M}(\Omega)\times \mathscr{M}(\RR^3)$, where, $\mathscr{M}(\mathbb{X})$ denotes the set of finite Radon measures on $\mathbb{X}\in \{ \mathbb{R}^3,\Omega\}$ being such that
\begin{equation}
\|\mu\|_{\mathscr{M}(\mathbb{X})}\triangleq\sup_{\{\varphi\in C_{0}(\mathbb{X}), \|\varphi\|_{L^\infty(\mathbb{X})}\le1\}}|\langle\mu,\varphi\rangle|<\infty,
\end{equation}
with $\langle\cdot,\cdot\rangle$ symbolizes the pairing between $\mathscr{M}(\mathbb{X})$ and $C_{0}(\mathbb{X})$ which is defined by  \begin{equation*}
\langle\mu,\varphi\rangle\triangleq\int_{\mathbb{X}}\varphi(x)d\mu(x) . 
\end{equation*}

\hspace{0.5cm}To streamline the above aims, we fix some notations freely used in course of this paper. Let us start by endowing the half-space $\Omega=\{(r,z)\in\RR^2: r>0\}$ with the two-dimensional measure $drdz$, as opposed to 3D measure $rdrdz$ which seems to be convenient for axisymmetric topics. Thus, for $p\in[1,\infty]$, the Lebesgue space $L^p(\Omega)$ as the set of measurable functions $\omega_{\theta}:\Omega\rightarrow\RR$ such that the norm
\begin{equation*}
\|\omega_{\theta}\|_{L^p(\Omega)}=\left\{\begin{array}{ll}
\Big(\int_{\Omega}|\omega_{\theta}(r,z)|^p drdz\Big)^{\frac1p} & \textrm{if $p\in [1,\infty)$,}\\
\essup_{(r,z)\in\Omega}|\omega_{\theta}(r,z)| & \textrm{if $p=\infty$}
\end{array}
\right.
\end{equation*}
is finite. Also, we recall that $C_{0}(\Omega)$ (resp. $C_0(\RR^3)$) refers to the set of all continuous functions over $\Omega$ (resp. $\RR^3$) that vanish at infinity and on the boundary $\partial\Omega$. For $f\in L^1(\Omega)$ define the measure $\mu_{f}$ by $\langle\mu_f,\varphi\rangle=\int_{\Omega}f(x)\varphi(x)dx$, where $dx$ designates the Lebesgue measure. It can easily be seen that $\mu_f\in\mathscr{M}(\Omega)$, thus we deduce that $L^1(\Omega)$ can be identified as a closed subspace of $\mathscr{M}(\Omega)$ and $\|\mu_f\|_{\mathscr{M}(\Omega)}=\|f\|_{L^1(\Omega)}$. Furthermore, the space $\mathscr{M}(\Omega)$ is the tolopogical dual of $C_0(\Omega)$, so the Banach-Alaoglu theorem, insures that the unit ball in $\mathscr{M}(\Omega)$ is a sequentially compact set for the weak topology in the following sense: 
\begin{equation}\label{weak-converge}
\mu_n\rightharpoonup \mu,\;\;\mbox{if}\;\;\lim_{n\rightarrow\infty}\langle\mu_n,\varphi\rangle=\langle\mu,\varphi\rangle\quad\forall\varphi\in C_0(\Omega).
\end{equation}
Each $\mu\in\mathscr{M}(\Omega)$ can be decomposed in unique way as 
\begin{equation}\label{decomposition}
\mu=\mu_{ac}+\mu_{sc}+\mu_{pp}, \quad \mu_{ac}\perp\mu_{sc}\perp\mu_{pp}
\end{equation}
and
\begin{equation*}
\|\mu\|_{\mathscr{M}(\Omega)}=\|\mu_{ac}\|_{\mathscr{M}(\Omega)}+\|\mu_{sc}\|_{\mathscr{M}(\Omega)}+\|\mu_{pp}\|_{\mathscr{M}(\Omega)}.
\end{equation*}

where, in the sequel we denote by:\\ 
$\bullet$ $\mu_{ac}$ is a measure which is absolutely continuous with respect to Lebesgue measure, that is $\dfrac{d\mu_{ac}}{dx}=f$ for some $f\in L^1(\Omega)$,\\ 
$\bullet$ $\mu_{sc}$ is a singular continuous measure which has no atom but is supported on a set of zero Lebesgue measure.\\
$\bullet$ $\mu_{pp}$ is an atomic measure, $\mu_{pp}=\sum_{n\ge1}\lambda_{n}\delta_{a_n},\;(\lambda_n)\subset\RR,\;(a_n)\subset\Omega$, with $\delta_{a_n}$ stands to be the Dirac measure supported at  $a_n\in\Omega$.

\subsection{Statement of the main results}This subsection addresses to state the main results of this paper and thrash out the headlines of their proofs.  

\hspace{0.5cm}To make our presentation more convenient, we state the following result as a reference throughout this paper, and its demonstration in explicit way can be found in \cite{Hanachi-Houamed-Zerguine}.  
\begin{Theo}\label{First-Th.} Let $(\omega_{0},\rho_{0})\in L^1(\Omega)\times L^1 (\RR^3)$ be axisymmetric initial data, then the Boussinesq system \eqref{VD}   admits a unique global mild axisymmetric solution $(\omega,\rho)$ being such that
\begin{equation}\label{first-result}
(\omega_{\theta}, r\rho) \in \big(  C^0\big([0,\infty);  L^1(\Omega)\big)\cap C^0\big((0,\infty); L^\infty(\Omega)\big) \big)^2.
\end{equation}
\begin{equation}\label{first-result2}
 \rho \in C^0\big([0,\infty);  L^1(\RR^3)\big)\cap C^0\big((0,\infty); L^\infty(\RR^3)\big).
\end{equation}
Furthermore, for every $p\in [1,\infty)$, there exists some constant $K_p(D_0)>0$, for which, and for all $t>0$ the following statements hold 
\begin{equation}\label{boundedness}
\|(\omega_{\theta}(t),r\rho(t))\|_{L^p(\Omega)\times L^p(\Omega)}\leq t^{-(1-\frac{1}{p})}  K_p(D_0) .
\end{equation}
\begin{equation}\label{boundedness2}
\| \rho(t) \|_{L^p(\RR^3) }\leq t^{-\frac{3}{2} (1-\frac{1}{p})}  K_p(D_0),
\end{equation}
where
\begin{equation*}
D_0 \triangleq\|(\omega_0,\rho_0)\|_{L^1(\Omega)\times L^1(\RR^3)}.
\end{equation*}
\end{Theo}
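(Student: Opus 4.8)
The plan is to carry over to the buoyancy‑coupled system the scheme of Gallay and \v{S}ver\'ak \cite{Gallay-Sverak} for the axisymmetric Navier--Stokes equations (themselves treated along the lines of the $2$D theory \cite{giga-miyakawa-osada,Gallagher-Gallay}): recast \eqref{VD} as a system of integral equations, run a fixed‑point argument in scale‑invariant time‑weighted Lebesgue spaces to obtain a local solution with instantaneous smoothing, and then globalize by propagating a priori bounds. First I would fix the mild formulation. Let $S(t)=e^{t(\Delta-\frac1{r^2})}$ be the semigroup on functions over $\Omega$ (equipped with the flat measure $drdz$) generated by the linear part of the $\omega_\theta$ equation, and $e^{t\Delta}$ the heat semigroup on $\RR^3$. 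Since $\Div v=0$, the advection and vortex‑stretching terms combine into a single divergence‑form expression, $v\cdot\nabla\omega_\theta-\frac{v^r}{r}\omega_\theta=\Div(\widetilde v\,\omega_\theta)$ with $\widetilde v=(v^r,v^z)$, and Duhamel's formula reads
\begin{equation*}
\omega_\theta(t)=S(t)\omega_0-\int_0^tS(t-s)\big(\Div(\widetilde v\,\omega_\theta)+\partial_r\rho\big)(s)\,ds,\qquad \rho(t)=e^{t\Delta}\rho_0-\int_0^te^{(t-s)\Delta}\Div(v\,\rho)(s)\,ds,
\end{equation*}
the velocity $v$ being reconstructed from $\omega_\theta$ by the axisymmetric Biot--Savart law; I would also carry $r\rho$ along as an auxiliary unknown, since a computation like the one leading to \eqref{Pi} shows it solves an equation of the same type as $\omega_\theta$ with source $-2\partial_r\rho$, which is what permits one to close \eqref{boundedness}.

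The analytic input is a package of smoothing estimates with the right scaling: $\|S(t)f\|_{L^q(\Omega)}\lesssim t^{-(\frac1p-\frac1q)}\|f\|_{L^p(\Omega)}$ and $\|e^{t\Delta}g\|_{L^q(\RR^3)}\lesssim t^{-\frac32(\frac1p-\frac1q)}\|g\|_{L^p(\RR^3)}$ for $1\le p\le q\le\infty$, each gaining a factor $t^{-1/2}$ when a first‑order derivative such as $\Div$ or $\partial_r$ is present (all from Gaussian kernel bounds), together with the Biot--Savart bounds controlling $\|\widetilde v\|$ and the singular quantity $v^r/r$ by $\|\omega_\theta\|_{L^p(\Omega)}$; note that, by Young's inequality, the constants obtained when these linear bounds are applied to $L^1$ data depend only on the $L^1$ norm. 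Inserting them into the integral equations makes, for a fixed $p>1$, the right‑hand sides a Lipschitz map on the time‑weighted space carrying the norm
\begin{equation*}
\sup_{0<t<T}\Big(t^{1-\frac1p}\|\omega_\theta(t)\|_{L^p(\Omega)}+t^{\frac32(1-\frac1p)}\|\rho(t)\|_{L^p(\RR^3)}+t^{1-\frac1p}\|r\rho(t)\|_{L^p(\Omega)}\Big),
\end{equation*}
which is a global contraction for data of small $L^1$ norm and a local one in general. In either case the solution is unique: $L^1$ functions carry no atomic part, so $\limsup_{t\to0}t^{1-\frac1p}\|\omega_\theta(t)\|_{L^p(\Omega)}=0$ (and likewise for $\rho$ and $r\rho$), and this is exactly what makes an integral Gronwall estimate on the difference of two solutions close \emph{without} any smallness assumption, yielding uniqueness on $[0,\infty)$. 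The continuity statements \eqref{first-result}--\eqref{first-result2} — strong continuity into $L^1$ at $t=0$ and into $L^\infty$ for $t>0$ — follow from strong continuity of the semigroups on $L^1$ together with the decay of the Duhamel terms.

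It remains to globalize the local solution for arbitrary $D_0$, and here the axisymmetric structure enters. I would mollify $(\omega_0,\rho_0)$ into smooth axisymmetric data of $L^1$ norm at most $D_0$, with the regularized $\omega_0/r$ moreover in $L^2\cap L^\infty$, solve \eqref{VD} globally for each regularization — this is available from the classical theory of axisymmetric flows without swirl (Ukhoviskii--Yudovich \cite{uy}, Ladyzhenskaya \cite{Ladyzhenskaya}), extended to the Boussinesq coupling by Abidi--Hmidi--Keraani \cite{Abidi-Hmidi-Keraani} and Hmidi--Rousset \cite{Hmidi-Rousset} via the good unknown $\Gamma=\frac{\omega_\theta}{r}-\frac{\rho}{2}$ solving the \emph{source‑free} equation $\partial_t\Gamma+v\cdot\nabla\Gamma-(\Delta+\frac2r\partial_r)\Gamma=0$ — and then pass to the limit. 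The uniform‑in‑regularization bounds would rest on the $L^1$ maximum principles for $\rho$ and for $\Gamma$ (the latter's operator dissipating the $L^1(\RR^3)$ norm, which stays bounded since $\|\omega_0^n/r\|_{L^1(\RR^3)}=2\pi\|\omega_0^n\|_{L^1(\Omega)}\le2\pi D_0$), bootstrapped through the mild representation and the smoothing estimates up to the decay bounds \eqref{boundedness}--\eqref{boundedness2}, with constants $K_p$ depending only on $D_0$. An Aubin--Lions compactness argument then yields a global mild solution with all the stated properties, which by the uniqueness already proved coincides with the local one — so the latter is global.

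The hard part will be the a priori control invoked in the last step. Unlike in pure axisymmetric Navier--Stokes, the vorticity $L^1(\Omega)$ balance is spoiled both by the vortex‑stretching contribution — which, in the $drdz$ framework, leaves a residual $\int\frac{v^r}{r}|\omega_\theta|\,drdz$ of no definite sign — and by the buoyancy source $-\partial_r\rho$; the good unknown $\Gamma$ disposes of the singular part $\frac{\partial_r\rho}{r}$ of the coupling and restores a dissipative structure, but one must still transfer its monotone norm back to $\omega_\theta$ and $r\rho$, propagate the attendant weighted bounds on the density near the axis, and carry all of this out uniformly in the regularization with constants depending only on $\|(\omega_0,\rho_0)\|_{L^1(\Omega)\times L^1(\RR^3)}$ — all at the critical $L^1$ scaling, where no stronger norm of $\omega_0/r$ or $\rho_0/r$ is available. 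Quantifying this interplay is the technical core of \cite{Hanachi-Houamed-Zerguine}.
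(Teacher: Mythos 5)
Your outline reproduces, in its essentials, the scheme on which Theorem \ref{First-Th.} actually rests (the theorem is imported here from \cite{Hanachi-Houamed-Zerguine}, but its machinery is exactly what Sections \ref{section prelim}--\ref{section proof-main} redeploy): the mild formulation with the two semi-groups $\Ss_1,\Ss_2$, the auxiliary unknown $\widetilde{\rho}=r\rho$ obeying an $\omega_\theta$-type equation with source $-2\partial_r\rho$, the fixed point in scale-invariant time-weighted spaces (the paper works at the single exponent $p=\frac43$, i.e.\ in $X_T\times X_T\times Z_T$, and recovers all other exponents by the bootstrap of Proposition \ref{properties-decay-Lp}), uniqueness from the vanishing of the critical norms as $t\to0$ for atom-free data, and the good unknown $\Gamma=\frac{\omega_\theta}{r}-\frac{\rho}{2}$ as the source of a priori control. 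The one place you genuinely diverge is the globalization: you mollify the data, invoke the classical smooth axisymmetric theory \cite{uy,Ladyzhenskaya,Abidi-Hmidi-Keraani,Hmidi-Rousset}, and pass to the limit by Aubin--Lions, whereas the paper exploits the instantaneous smoothing of the local mild solution itself (at any $t_0>0$ the solution lies in $L^1\cap L^\infty$, so one continues it and identifies the continuation with the original by uniqueness, as in the last subsection). Your detour is admissible but buys nothing: the uniform-in-regularization bounds needed for compactness are the same a priori estimates, and placing the Aubin--Lions limit in the uniqueness class (verifying the $X_T$-type bounds and the attainment of the initial data) essentially forces you to re-run the stability part of the fixed point anyway.

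The genuine gap is that the quantitative heart of the theorem --- the bounds \eqref{boundedness}--\eqref{boundedness2} with constants $K_p$ depending \emph{only} on $D_0$ and valid for \emph{all} $t>0$ --- is asserted rather than derived; you yourself defer ``quantifying this interplay'' to \cite{Hanachi-Houamed-Zerguine}. Two concrete points make this more than a routine omission. First, the local existence time produced by the contraction argument depends on the profile of the data, not merely on its $L^1$ norm, so one cannot simply restart the local theory at time $s$ with data of size $K_1(D_0)$ to propagate \eqref{boundedness}; the time-monotone quantities ($\|\rho(t)\|_{L^1(\RR^3)}$ and the dissipated norm of $\Gamma$, transferred back through $\omega_\theta=r\Gamma+\frac{\widetilde\rho}{2}$) must be fed into the Duhamel representation via a continuity/induction argument on $\sup_{0<s\le t}s^{1-\frac1p}\|\cdot\|_{L^p}$, in the spirit of \cite{Gallay-Sverak}. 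Second, the transfer itself is not free: the $\Gamma$-equation controls a weighted norm of $\frac{\omega_\theta}{r}$, and recovering $\|\omega_\theta(t)\|_{L^1(\Omega)}$ and $\|r\rho(t)\|_{L^1(\Omega)}$ from it requires the companion maximum principle for $\rho$ and a careful bookkeeping of the powers of $r$ near the axis, uniformly at the critical $L^1$ scaling. Until these estimates are written out, what you have is a correct local theory plus a correct statement of the strategy, not a proof of \eqref{first-result}--\eqref{boundedness2}.
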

 
\hspace{0.5cm}The main contribution of this paper is dedicated to extending the result of Theorem \ref{First-Th.} to more general case of initial data, that is to say the initial data belonging to the class of finite measure over $\Omega\times\RR^3$. Specifically, we shall prove the following theorem.
\begin{Theo}\label{thm-mesure-2}
There exists a non negative constant $\EE>0$ such that the following hold. Let $(\omega_0,\rho_{0})\in\mathscr{M}(\Omega)\times \mathscr{M}(\RR^3)$ with $\rho_0$ being axisymmetric in the sense of Definition \ref{def:axi:measure} and
  \begin{equation}\label{condition-theorem-2}
 \|\omega_{0,pp}\|_{\mathscr{M}(\Omega)}+ \|\rho_{0,pp}\|_{\mathscr{M}(\RR^3)}   \le \EE,
\end{equation}  
then, the Boussinesq system \eqref{VD}  admits a unique global mild axisymmetric solution $(\omega_{\theta},\rho)$ such that
\begin{equation*}
(\omega_{\theta},\rho)\in C^0\big((0,\infty);L^1(\Omega)\cap L^\infty(\Omega)\big)\times C^0\big((0,\infty);L^1(\RR^3) \cap L^\infty(\RR^3)\big),
\end{equation*}
\begin{equation*}
  r\rho \in C^0\big((0,\infty);L^1(\Omega)\cap L^\infty(\Omega)\big).
\end{equation*} 
Furthermore, for every $p\in (1,\infty]$, we have
\begin{equation*}
 \limsup_{t\rightarrow 0}\; t^{\frac{3}{2}(1-\frac{1}{p})}\| \rho(t) \|_{L^{p} (\RR^3)}\le C\EE  ,\quad \limsup_{t\rightarrow 0}\; t^{1-\frac{1}{p}}\|(\omega_{\theta}(t),r\rho(t))\|_{L^{p}(\Omega)\times L^{p}(\Omega)}\le C\EE 
\end{equation*}
and 
\begin{equation*}
  \limsup_{t\rightarrow 0}\;  \|(\omega_{\theta}(t),\rho(t))\|_{L^{1}(\Omega)\times L^{1}(\mathbb{R}^3)}<\infty.
\end{equation*}
Moreover, $(\omega_{\theta}(t),\rho(t))$ converges to  $(\omega_0,\rho_0) $, as $  t\rightarrow 0 $, in the sense of distribution.
\end{Theo}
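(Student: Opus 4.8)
\smallskip

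The plan is to follow the now-classical scheme of Gallay--Šverák (as used in \cite{Gallay-Sverak,giga-miyakawa-osada}): split the initial measure into a smooth part plus a small part, solve via a fixed-point argument in a scaling-critical space built on the linear heat (and modified heat) semigroup, and then recover the low-regularity continuity and convergence statements by approximation. I would first set up the appropriate Banach space: for the velocity-vorticity component work in $C((0,T);L^1(\Omega)\cap L^\infty(\Omega))$ with the weight $t^{1-\frac1p}\|\cdot\|_{L^p(\Omega)}$ controlled, and for $\rho$ in the analogous space on $\RR^3$ with the weight $t^{\frac32(1-\frac1p)}\|\cdot\|_{L^p}$; note that $r\rho$ should be tracked as a function on $\Omega$ in the same norm as $\omega_\theta$, exactly as in Theorem \ref{First-Th.}. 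The key linear input is the decay estimate for the modified semigroup associated with $-(\Delta-\tfrac1{r^2})$ on $\Omega$ and for $e^{t\Delta}$ on $\RR^3$, together with the refined bound $\limsup_{t\to0} t^{1-\frac1p}\|e^{t\Delta}\mu\|_{L^p}\le C_p\|\mu_{pp}\|_{\mathscr M}$ which localizes the smallness requirement to the atomic part; the continuous and absolutely continuous parts are handled by a density argument since $L^1$ is dense in $\{\mu:\mu_{pp}=0\}$ for this limsup functional.

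\smallskip

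The main steps, in order: \emph{(i)} Decompose $\omega_0=\omega_0^{\mathrm{sm}}+\omega_0^{\mathrm{bad}}$ and $\rho_0=\rho_0^{\mathrm{sm}}+\rho_0^{\mathrm{bad}}$ with $\omega_0^{\mathrm{sm}}\in L^1(\Omega)$, $\rho_0^{\mathrm{sm}}\in L^1(\RR^3)$ axisymmetric, and the bad parts small in $\mathscr M$ — this is possible precisely because \eqref{condition-theorem-2} bounds the atomic parts, the remaining non-atomic mass being approximable in the relevant seminorm. \emph{(ii)} Apply Theorem \ref{First-Th.} to get a global solution $(\tilde\omega,\tilde\rho)$ with data $(\omega_0^{\mathrm{sm}},\rho_0^{\mathrm{sm}})$, and look for the full solution as $(\tilde\omega+w,\tilde\rho+\eta)$; write the Duhamel formulation for $(w,\eta)$, whose forcing involves the bilinear transport terms $v\cdot\nabla$, the stretching term $\frac{v^r}{r}\omega_\theta$, the buoyancy coupling $-\partial_r\rho$, and cross terms between the smooth and perturbative parts. \emph{(iii)} Estimate each term: the bilinear/transport and stretching terms are controlled by the Biot--Savart law on $\Omega$ (bounding $v^r/r$, $v^z$, $v^r$ in suitable $L^p(\Omega)$ norms in terms of $\omega_\theta$) combined with the semigroup decay, producing a product of two critical norms; the linear-in-perturbation terms (involving $\tilde\omega,\tilde\rho$) are small on a short time interval $(0,T)$ because the smooth solution's critical norm tends to $0$ as $t\to0$, or alternatively are absorbed using the smallness of the bad data. \emph{(iv)} Run the contraction mapping on the ball of radius $\sim\EE$ in the critical space on $(0,T)$, extend globally by the a priori $L^1$ bound (which is where the Gronwall-type argument and the $L^1\to L^1$ contractivity of the drift-diffusion flow enter), obtain uniqueness from the same fixed-point estimate, and finally verify the $\limsup$ bounds and the distributional convergence $(\omega_\theta(t),\rho(t))\to(\omega_0,\rho_0)$ by testing against $C_0$ functions and using $e^{t\Delta}\mu\rightharpoonup\mu$ plus the vanishing of the nonlinear Duhamel term as $t\to0$.

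\smallskip

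The hard part, I expect, will be step \emph{(iii)} for the coupling and stretching terms on the half-space $\Omega$ with the \emph{two-dimensional} measure $dr\,dz$: one must show that the Biot--Savart kernel on $\Omega$ together with the modified Laplacian $\Delta-\frac1{r^2}$ produces the right critical estimates uniformly up to the axis $r=0$, and that the quantity $r\rho$ (rather than $\rho$ itself) is the correct object closing the estimates for the buoyancy term $-\partial_r\rho$ — this is the structural observation inherited from \cite{Hmidi-Rousset} via the auxiliary unknown $\Gamma=\Pi-\rho/2$, and checking that it survives at the level of mere measures, in particular that $\rho_{0,pp}$ being small suffices to control $\partial_r\rho$ near the axis, is the delicate point. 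A secondary difficulty is making the global extension rigorous: one needs the propagation of the $L^1$ bound $\|\omega_\theta(t)\|_{L^1(\Omega)}\lesssim\|\omega_0\|_{\mathscr M(\Omega)}$ and $\|\rho(t)\|_{L^1}\lesssim\|\rho_0\|_{\mathscr M}$ for the constructed solution, which should follow from the maximum-principle/contractivity structure of \eqref{VD} together with the regularizing effect that makes the solution lie in $L^1\cap L^\infty$ for $t>0$, allowing Theorem \ref{First-Th.} to be reapplied from any positive time.
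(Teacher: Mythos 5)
Your overall scheme is sound and close in spirit to the paper's, but there is one genuine gap at the heart of the matter: you never say what the initial datum of the auxiliary unknown $\widetilde{\rho}=r\rho$ \emph{is} when $\rho_0$ is merely a finite measure. The multiplication $r\rho_0$ has no a priori meaning for $\rho_0\in\mathscr{M}(\RR^3)$, and your perturbative decomposition does not make the problem disappear: the ``bad'' part $\rho_{0,sc}+\rho_{0,pp}$ is still a genuine measure, and the equation for the corresponding piece of $r\rho$ needs a measure-valued initial condition on $\Omega$. This is precisely the main novelty of the paper: one first develops a notion of axisymmetric measure (Definition \ref{def:axi:measure}) and, via the push-forward construction of Proposition \ref{prop:mu-axi}, defines $\widetilde{\rho}_0\in\mathscr{M}(\Omega)$ by $\langle\widetilde{\rho}_0,\psi\rangle=\tfrac{1}{2\pi}\int_{\RR^3}\psi(\sqrt{x^2+y^2},z)\,d\rho_0$, checks that $\|\widetilde{\rho}_{0,pp}\|_{\mathscr{M}(\Omega)}=\tfrac{1}{2\pi}\|\rho_{0,pp}\|_{\mathscr{M}(\RR^3)}$ (so that the smallness hypothesis \eqref{condition-theorem-2} transfers to $\widetilde{\rho}_0$), solves the three-component integral system with this datum, and only \emph{a posteriori} identifies $\widetilde{\rho}(t)=r\rho(t)$ for $t>0$ by a uniqueness argument for the heat flow and by proving that both quantities have the same weak limit at $t=0$ (tested against $\varphi\circ\mathfrak{F}$ with $\varphi\in C^\infty_{c,\axi}(\RR^3)$ vanishing to second order on the axis, which is also needed because $\partial_r$ does not commute with $\Ss_1(t)$). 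Flagging this as ``the delicate point'' is not a substitute for the construction. A second, smaller imprecision: the non-atomic part of a measure is \emph{not} approximable by $L^1$ functions in the total variation norm (the singular continuous part is mutually singular with every $f\,dx$); what is true, and what you must invoke, is that the limsup functionals $L_q$ and $\widetilde{L}_q$ of Propositions \ref{prop mu-lamda} and \ref{3D-heat kernel} vanish on non-atomic measures, so smallness is only ever needed in that seminorm.

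As for the route itself: you perturb around the global solution of Theorem \ref{First-Th.} with smoothed data and contract for the difference, whereas the paper runs the fixed point directly on the full measure data in the critical spaces $X_T\times X_T\times Z_T$ (norms $t^{1/4}\|\cdot\|_{L^{4/3}(\Omega)}$ and $t^{3/8}\|\cdot\|_{L^{4/3}(\RR^3)}$), using $\limsup_{T\to0}\Lambda(\omega_0,\widetilde{\rho}_0,\rho_0,T)\le C\EE$ to close the contraction on a short interval; higher $L^p$ decay then follows by bootstrap and globalization by restarting Theorem \ref{First-Th.} from any $t_0>0$. Your decomposition approach (closer to Gallagher--Gallay) can be made to work and would have the advantage of isolating where smallness is really used, at the cost of extra cross terms between the smooth background and the perturbation; but either way the construction of $\widetilde{\rho}_0$ described above is unavoidable.
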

Few remarks are in order. 
\begin{Rema} 
Observe that Theorem \ref{thm-mesure-2} covers a class of initial data which is considerably larger than the one treated by Theorem \ref{First-Th.}. However, the smallness condition for the atomic parts is crucial in our arguments to guarantee the existence and the uniqueness. Nevertheless, we should point out that it is probably possible to construct global solutions for arbitrary large initial data (see \cite{Feng-Sverak} for a precise result of that in the case of the vortex rings). Such  result is based on smoothing out the initial data, hence, the uniqueness is to be dealt with separately to the existence part, which, on the other hand, stands to be open in general for the time being even for the case the Navier-Stokes equations.
\end{Rema} 
\begin{Rema} The serious drawback that arises in Theorem \ref{thm-mesure-2}  is how to give a rigorous and suitable sense to the initial data associated with the quantity $r\rho$ if the initial density $\rho_0$ is a finite measure. More precisely, it is important to make a choice that does not perturb the weak continuity of the solution near $t=0$. To get around this, we should understand and give some general notions on the axisymmetric measures in $\RR^3 $ (see, the next section). In broad terms, one should notice that, with Theorem \ref{First-Th.} on the hand, the most challenging part in the proof of Theorem \ref{thm-mesure-2} is the understanding of the solution near $t=0$. Indeed, after some $t_0>0$, the solution becomes more regular and, hence, the arguments of Theorem \ref{First-Th.} would apply to garantee  estimates alike \eqref{boundedness} and \eqref{boundedness2}, globally in time. In other words, to be more precise on the main novel part in Theorem \ref{thm-mesure-2}, we refer to Theorem \ref{Th2:general version} that we shall prove at the end of this paper.
\end{Rema}
\begin{Rema}
Remark that the case $p=\infty$ is missing in the statement of Theorem \ref{First-Th.}. However, as we shall see later on, the estimates of Theorem \ref{First-Th.} hold also in this case. To justify our claim, we will outline the idea to get the $L^\infty$-estimate in the proof of Proposition \ref{properties-decay-Lp} below. 
\end{Rema}

{\bf Structure of the paper.} We discuss concisely the steps of the proof of Theorem \ref{thm-mesure-2} and the structure of the paper. The local well-posedness will be done via the classical fixed point formalism in an adequate functional spaces as in the proof of Theorem \ref{Second-Th.}. But before doing so, as noticed in the proof of Theorem \ref{First-Th.} \cite{Hanachi-Houamed-Zerguine}, since the quantity $r\rho$ will play a significant role in our analysis, we have to give a suitable sense to the limit of $  r\rho(t)$, as $t$ tends to $0$, in the case of initial measure-type density $\rho_0.$ For a better understanding of this limit, we state in Section \ref{section prelim} a succinct about the measure theory, in particular the push forward of a measure by a measurable function in the general case. This can be also considered as a preamble to introduce the concept of axisymmetric measures. In the second part of Section \ref{section prelim}, we shall recall some nice properties of the semi--groups associated to the system in study. Thereafter, Section \ref{section proof-main} contains three parts: In the first one, we prove a result which can be seen as an intermediate case between  Theorem \ref{First-Th.} and Theorem \ref{thm-mesure-2}  where we assume that $(\omega_0,\rho_0)\in\mathscr{M}(\Omega)\times L^1(\RR^3)$. The details of the proof we provide in this part should help to simplify the presentation of the proof of  Theorem \ref{thm-mesure-2}. Then, in the second part, we shall prove a general version of the local well-posedness (see Theorem \ref{Th2:general version}) which implies the local results of Theorem \ref{thm-mesure-2}. Finally, in the last part of Section \ref{section proof-main}, we outline the idea that allows to globalize the local results we prove in the first two parts.

\section{Setup and preliminary results}\label{section prelim}
In this preparatory section we gather the basic ingredients freely explored during this work. We begin with a self contained abstract on some notions from measures theory. Then, we recall some estimates of the heat semi-group   in Lebesgue spaces.
\subsection{Results on measure theory}\label{sub-section: measures}
We embark by the measure tool, where we state the notion of the  push-forward measure, some nice properties and we state a new concept about the axisymmetric measures and we give two fundamental examples. Overall, we claim the action of the axisymmetric measure  on the heat semi-group. We end this section by an important properties concerning the restriction of any axisymmetric measure on $\Omega$ in order to define the quantity $r\rho.$
\begin{defi}\label{pushforward-def}
Let $(X_1,\Sigma_1)$ and $(X_2,\Sigma_2)$ be two measurable spaces and $\mu$ be a positive measure on $(X_1,\Sigma_1)$. Let $F$ be a measurable mapping from $X_1$ into $X_2$. The  push-forward measure of $\mu$ by $F$, denoted $F_\star\mu$, is defined as 
\begin{align*}
F_\star\mu: &\Sigma_2 \rightarrow [0,\infty]\\
& B \longmapsto  F_\star\mu (B) \triangleq \mu (F^{-1}(B)).
\end{align*}
\end{defi}
The main feature of the above definition is the fact that it  is useful in the following generalization formula of change of variable (see Sections 3.6--3.7 in \cite{Bogachev})
\begin{Theo}\label{change of variables formula}
A measurable function $g$ on $X_2$ is integrable with respect to the push-forward measure $F_\star\mu$ if and only if the composition $g\circ F$ is integrable with respect to the measure $\mu$. As well, we have
\begin{equation}\label{G-ch-v}
\int_{X_2} gd(F_\star\mu)=\int_{X_1}g\circ F d\mu.
\end{equation}
\end{Theo}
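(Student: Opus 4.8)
The plan is to establish \eqref{G-ch-v} by the standard approximation scheme: prove it first for non-negative measurable $g$, and then deduce the integrability equivalence together with the general (signed or complex) case.

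First I would check the identity for indicator functions. If $B\in\Sigma_2$ and $g=\chi_B$, then since $F$ is measurable we have $F^{-1}(B)\in\Sigma_1$ and $g\circ F=\chi_B\circ F=\chi_{F^{-1}(B)}$ is $\Sigma_1$-measurable; hence
\begin{equation*}
\int_{X_1}g\circ F\,d\mu=\mu\big(F^{-1}(B)\big)=F_\star\mu(B)=\int_{X_2}g\,d(F_\star\mu),
\end{equation*}
which is exactly \eqref{G-ch-v} in this case, and it also records that $g\circ F$ is measurable whenever $g$ is, by the usual generation argument. By linearity of the integral, \eqref{G-ch-v} then extends to every non-negative simple function $g=\sum_{i=1}^{n}c_i\chi_{B_i}$ with $c_i\ge0$ and $B_i\in\Sigma_2$.

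Next, for an arbitrary measurable $g:X_2\to[0,\infty]$, I would pick an increasing sequence of non-negative simple functions $g_n\uparrow g$ pointwise on $X_2$; then $g_n\circ F\uparrow g\circ F$ pointwise on $X_1$. Applying the monotone convergence theorem on both sides and invoking the simple-function case,
\begin{equation*}
\int_{X_2}g\,d(F_\star\mu)=\lim_{n\to\infty}\int_{X_2}g_n\,d(F_\star\mu)=\lim_{n\to\infty}\int_{X_1}g_n\circ F\,d\mu=\int_{X_1}g\circ F\,d\mu,
\end{equation*}
so \eqref{G-ch-v} holds for all non-negative measurable $g$, with both sides possibly equal to $+\infty$.

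Finally, for a general measurable $g$ on $X_2$, applying the previous step to $|g|$ (and noting $|g|\circ F=|g\circ F|$) gives $\int_{X_2}|g|\,d(F_\star\mu)=\int_{X_1}|g\circ F|\,d\mu$, so one integral is finite if and only if the other is, which is precisely the claimed integrability equivalence. When these are finite, I would split $g$ into its positive and negative parts (and into real and imaginary parts in the complex case), apply the non-negative case to each piece, and recombine by linearity to get \eqref{G-ch-v} in full generality. The only point needing a little care is the measurability of $g\circ F$, which is exactly where the hypothesis that $F$ is measurable enters; beyond that the argument is the routine ``reduce to indicators, then monotone convergence'' bootstrap, so I do not expect a genuine obstacle.
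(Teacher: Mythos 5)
Your proof is correct and is the standard ``indicators $\to$ simple functions $\to$ monotone convergence $\to$ positive/negative parts'' argument. The paper does not prove this theorem itself but cites Sections 3.6--3.7 of Bogachev's \emph{Measure Theory}, where essentially this same argument is given, so there is nothing to add.
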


The typical example that concerns the axisymmetric structure given as follows. For $\alpha\in[0,2\pi)$, define $\mathfrak{R}_{\alpha}:\RR^3\rightarrow\RR^3$ by $x\mapsto\mathfrak{R}_{\alpha}x$, with 
\begin{equation}\label{Rot-alpha}
\mathfrak{R}_{\alpha}\triangleq \left(
\begin{array}{ccc}
\cos \alpha& -\sin\alpha& 0 \\
\sin\alpha & \cos\alpha &  0 \\
0 & 0 & 1
\end{array}
\right),\quad x^{t}=(x_1,x_2,x_3).
\end{equation}
An elementary calculus shows that $\mathfrak{R}_{\alpha}$ is an orthogonal $3\times 3$ matrix, with $\mathfrak{R}_{\alpha}^{-1}=\mathfrak{R}_{-\alpha}$. In addition, by exploring Definition \ref{pushforward-def}, for $\mu\in \mathscr{M}(\RR^3) $, we can define its push-forward measure $\mathfrak{R}_{\alpha}\mu$ as an element of $\mathscr{M}(\RR^3) $. Moreover, Theorem \ref{change of variables formula} provides us the following identity
\begin{equation*}
\langle \mathfrak{R}_{\alpha}\mu,\varphi\rangle=\langle\mu,\varphi \circ\mathfrak{R}_{-\alpha}\rangle,\quad\forall \varphi\in C^{0}(\RR^3).
\end{equation*} 
Based on that, we state the following definition.
\begin{defi}[\textbf{Axisymmetric measure}]\label{def:axi:measure}
A Radon measure  $\mu\in\mathscr{M}(\RR^3)$  is  said to be axisymmetric if and only if it is stable by the push-forward mapping $\mathfrak{
R}_{\alpha}$, for all $\alpha\in [0,2\pi)$. i.e, if 
\begin{equation}\label{axi-identity}
\mathfrak{R}_{\alpha} \mu = \mu, \quad \forall \alpha \in [0,2\pi).
\end{equation}
\end{defi}
\begin{Rema}
The above definition says   that the measure $\mu$ is   axisymmetric if and only if it is stable by the push-forward mapping $\mathfrak{R}_{\alpha} $, for all $\alpha\in [0,2\pi)$. We can, moreover, check that this definition is equivalent to 
\begin{equation*}
\mu(B)=\frac{1}{2\pi}\int_{0}^{2\pi}\mathfrak{R}_{\alpha}\mu( B)d\alpha,\quad \forall B\in\mathscr{B}(\RR^3).
\end{equation*}
Or, also equivalently  
\begin{equation*}
d\mu=\frac{1}{2\pi}\int_{0}^{2\pi}d(\mathfrak{R}_{\alpha} \mu)d\alpha.
\end{equation*}

\end{Rema}

To illustrate the above definition we state here two typical examples of an axisymmetric measure. 
\begin{itemize}
\item \textit{Absolutely continuous axisymmetric measures }: If $\mu\in \mathscr{M}_{ac}(\RR^3)$, then there exists an integrable function \mbox{$f_\mu \in L^1(\RR^3)$} such that 
$$\mu(B)= \int_B f_\mu(x) dx, \quad \forall B\subset \RR^3. $$
In this case, we can check that $\mu$ is axisymmetric in the sense of Definition \ref{def:axi:measure} if and only if $f_\mu$ is an axisymmetric function in the classical sense.\\
 \item \textit{Atomic axisymmetric measures }: Let $a\in \RR^3$ and $\mu= \delta_a$,  one can check that $\mu$ is axisymmetric in the sense of Definition \ref{def:axi:measure} if and only if $\{a\}$ is stable by rotation around the $(oz)$ axis. In other words, $\delta_a$ is axisymmetric if and only if  $a\in (oz).$ More generally, if $A\subset \mathbb{R}^3$, then $\mu=\delta_A$ is axisymmetric if and only if, there exists $(r,z)\in \mathbb{R}^+\times \mathbb{R}$ such that 
 \begin{equation*}
 A= \bigcup_{\theta \in [0,2\pi]} \big\{(r\cos \theta, r\sin \theta, z)  \big\}.
 \end{equation*}
\end{itemize} 
As  a nice consequence of the above ingredients we have the following elementary result.
\begin{prop}\label{axi:sol:heat}
Let  $\mu\in\mathscr{M}(\RR^3)$ be an axisymmetric measure, then the function $x\mapsto e^{t\Delta}\mu(x)$ is axisymmetric.
\end{prop}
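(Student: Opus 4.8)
The plan is to realize $e^{t\Delta}\mu$ explicitly as the convolution of the Gaussian heat kernel with the measure $\mu$, namely
\[
\big(e^{t\Delta}\mu\big)(x)=\int_{\RR^3}G_t(x-y)\,d\mu(y),\qquad G_t(x)=(4\pi t)^{-3/2}e^{-|x|^2/(4t)},
\]
which is a genuine (indeed smooth) function of $x$ since $G_t$ is a Schwartz function and $\mu$ is finite. Recall that a function $h:\RR^3\to\RR$ is axisymmetric in the classical sense exactly when $h\circ\mathfrak{R}_\alpha=h$ for every $\alpha\in[0,2\pi)$; so it suffices to prove $\big(e^{t\Delta}\mu\big)(\mathfrak{R}_\alpha x)=\big(e^{t\Delta}\mu\big)(x)$ for all $x\in\RR^3$ and all $\alpha\in[0,2\pi)$.

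First I would exploit the two structural facts about $\mathfrak{R}_\alpha$: it is orthogonal with $\mathfrak{R}_\alpha^{-1}=\mathfrak{R}_{-\alpha}$, and $G_t$ is radial, hence $G_t\circ\mathfrak{R}_\alpha=G_t$. Writing $\mathfrak{R}_\alpha x-y=\mathfrak{R}_\alpha\big(x-\mathfrak{R}_{-\alpha}y\big)$ and using radiality gives $G_t(\mathfrak{R}_\alpha x-y)=G_t\big(x-\mathfrak{R}_{-\alpha}y\big)$, so that, for fixed $x$,
\[
\big(e^{t\Delta}\mu\big)(\mathfrak{R}_\alpha x)=\int_{\RR^3}G_t\big(x-\mathfrak{R}_{-\alpha}y\big)\,d\mu(y)=\int_{\RR^3}\big(g\circ\mathfrak{R}_{-\alpha}\big)(y)\,d\mu(y),
\]
where $g(z)\triangleq G_t(x-z)\in C^0(\RR^3)$. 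Since $\mu$ is only a measure and not a Lebesgue-absolutely-continuous one, the substitution cannot be done by a naive change of variables; instead I would invoke exactly the machinery set up in Theorem \ref{change of variables formula}, applied with $F=\mathfrak{R}_{-\alpha}$, to recognize the last integral as $\int_{\RR^3}g\,d\big((\mathfrak{R}_{-\alpha})_\star\mu\big)=\langle\mathfrak{R}_{-\alpha}\mu,g\rangle$ in the notation of the previous paragraph.

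Finally I would use the hypothesis that $\mu$ is axisymmetric: since $\mathfrak{R}_\beta\mu=\mu$ for every $\beta\in[0,2\pi)$ and $\mathfrak{R}_{-\alpha}=\mathfrak{R}_{2\pi-\alpha}$, we get $\mathfrak{R}_{-\alpha}\mu=\mu$ as measures, whence $\langle\mathfrak{R}_{-\alpha}\mu,g\rangle=\langle\mu,g\rangle=\int_{\RR^3}G_t(x-z)\,d\mu(z)=\big(e^{t\Delta}\mu\big)(x)$. Chaining the displays yields $\big(e^{t\Delta}\mu\big)(\mathfrak{R}_\alpha x)=\big(e^{t\Delta}\mu\big)(x)$ for all $\alpha$ and $x$, i.e. $x\mapsto e^{t\Delta}\mu(x)$ is axisymmetric. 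There is no substantial obstacle; the only points needing a word of care are the justification that $e^{t\Delta}\mu$ is well defined pointwise as a function (immediate, $G_t\in\mathcal{S}(\RR^3)$ and $\mu$ finite) and the bookkeeping of the rotation directions, which is harmless precisely because the axisymmetry assumption is imposed simultaneously for all angles.
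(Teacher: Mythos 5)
Your proof is correct and follows essentially the same route as the paper's: both write $e^{t\Delta}\mu$ as the Gaussian convolution, use the orthogonality of $\mathfrak{R}_\alpha$ and the radiality of the kernel to move the rotation onto the integration variable, invoke Theorem \ref{change of variables formula} to identify the resulting integral as a pairing with a push-forward of $\mu$, and conclude by the axisymmetry hypothesis. The only (immaterial) difference is the labelling of the push-forward as $\mathfrak{R}_{-\alpha}\mu$ versus $\mathfrak{R}_{\alpha}\mu$, which is harmless since the hypothesis holds for every angle.
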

\begin{proof}
We need   to show that, for all $\alpha\in [0,2\pi)$, there holds
\begin{equation*}
e^{t\Delta}\mu(x)= e^{t\Delta}\mu (\mathfrak{R}_{\alpha} x ) .
\end{equation*}
To do so, we write 
\begin{eqnarray*}
 e^{t\Delta}\mu(\mathfrak{R}_{\alpha}(x)) &=&\frac{1}{(4\pi t)^\frac32} \int_{\RR^3}e^{-\frac{  |\mathfrak{R}_{\alpha}x-y|^2}{4t}}d\mu(y) \\
&=&\frac{1}{(4\pi t)^\frac32} \int_{\RR^3}e^{- |\mathfrak{R}_{\alpha}|^2\frac{|x-\mathfrak{R}_{-\alpha}y|^2}{ 4t} }d\mu(y) .
\end{eqnarray*}
Now, exploring the fact that $|\mathfrak{R}_{\alpha}|^2=1$ and by taking $g_{t,x}( y)=\frac{1}{(4\pi t)^\frac32}e^{-\frac{|x- y|^2}{ 4t} }$, then Theorem \ref{change of variables formula} implies
\begin{eqnarray*}
 e^{t\Delta}\mu(\mathfrak{R}_{\alpha}(x))d\alpha&=& \int_{\RR^3}g_{t,x}\circ \mathfrak{R}_{-\alpha}(y )d  \mu(y)  \\
&=&  \int_{\RR^3}g_{t,x} (y )d(\mathfrak{R}_{\alpha}\mu(y)) 
 \end{eqnarray*}
 Now, because $\mu$ is an axisymmetric measure, we infer that for all $\alpha\in [0,2\pi)$
\begin{eqnarray*}
 \int_{\RR^3}g_{t,x} (y )d(\mathfrak{R}_{\alpha}\mu(y))    &=&\int_{\RR^3}g_{t,x}(y)d\mu(y) .
\end{eqnarray*}
 This completes the proof.
\end{proof}
\begin{Rema}
For any PDE in general, and for the Boussinesq system in particular, if we are looking for special solutions, then,   we need to impose some kind of suitable compatibility condition for the initial data that fits well with the desired structure of the solutions.   Definition \ref{def:axi:measure} plays exactly that role here. In other words,  it provides the requirement of the structure of the initial data that allows to study the existence of axisymmetric solutions. Proposition \ref{axi:sol:heat} is then a typical example of the propagation of this special geometric structure of  the initial data for all times $t\geqslant 0$. 
\end{Rema}
\begin{Rema}
Let us also mention that a similar result to Proposition \ref{axi:sol:heat} would apply to more general equations such as 
\begin{equation}\label{B(ff)}
 f= e^{t\Delta}\mu + B(f,f), 
\end{equation}
if $B$ is a bi-linear operator, preserving the axisymmetric structure\footnote{ In the case of the Boussinesq system, $B$ takes the form $B(\rho,\rho)= \int_0^t e^{(t-s)\Delta} (v\cdot \nabla \rho)(s) ds$, and $ v$ is related to $\rho$ through the Navier-Stokes equations.}, and if a contraction argument is applicable to the system \eqref{B(ff)} in some time-space Banach space $X_T$. Indeed, the proof of such   result relies on proving that the sequence
\begin{equation*}
\left\{\begin{array}{ll}
f_n= e^{t\Delta}\mu + B(f_{n-1},f_{n-1}),& n\geq 1,\\
f_0 = e^{t\Delta}\mu.
\end{array}\right.
\end{equation*}
converges to some axisymmetric limit $f$ in the space $X_T$. This can be done easily under the aforementioned assumptions on $B$ and on the space $X_T.$
\end{Rema}

\begin{lem}[\textbf{Action on test functions}]\label{lemma:action:axi}
Let $\mu$ be an axisymmetric measure, then, for any \mbox{$\varphi \in C^0(\RR^3)$}, we have that
\begin{equation*}
\left\langle \mu, \varphi \right\rangle = \left\langle \mu, \varphi_{\axi}\right\rangle,
\end{equation*}
where,  $\varphi_{\axi}$ is the axisymmetric part of $\varphi$ given by
\begin{equation}\label{phi-axi:def}
\varphi_{\axi}\triangleq \frac{1}{2\pi}\int_0^{2\pi}\varphi \circ \mathfrak{R}_\alpha d\alpha.
\end{equation}
\end{lem}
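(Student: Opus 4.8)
The plan is to prove the identity $\langle \mu, \varphi\rangle = \langle \mu, \varphi_{\axi}\rangle$ by combining the rotational invariance of $\mu$ (Definition \ref{def:axi:measure}) with the change of variables formula (Theorem \ref{change of variables formula}) and then averaging over $\alpha \in [0,2\pi)$ using Fubini's theorem.

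First I would fix $\alpha \in [0,2\pi)$ and apply Theorem \ref{change of variables formula} with $F = \mathfrak{R}_{-\alpha}$ and $g = \varphi$, together with the axisymmetry hypothesis $\mathfrak{R}_\alpha \mu = \mu$. Concretely, since $\mathfrak{R}_\alpha \mu = \mu$, we have
\begin{equation*}
\langle \mu, \varphi\rangle = \langle \mathfrak{R}_\alpha \mu, \varphi\rangle = \int_{\RR^3} \varphi \, d(\mathfrak{R}_\alpha \mu) = \int_{\RR^3} \varphi \circ \mathfrak{R}_\alpha \, d\mu = \langle \mu, \varphi \circ \mathfrak{R}_\alpha\rangle,
\end{equation*}
where the third equality is \eqref{G-ch-v} applied to the push-forward of $\mu$ by $\mathfrak{R}_\alpha$ (recall $\mathfrak{R}_\alpha^{-1} = \mathfrak{R}_{-\alpha}$, so this is the identity $\langle \mathfrak{R}_\alpha \mu, \varphi\rangle = \langle \mu, \varphi \circ \mathfrak{R}_\alpha\rangle$ already displayed in the excerpt just before Definition \ref{def:axi:measure}, up to relabeling $\alpha \leftrightarrow -\alpha$). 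Thus $\langle \mu, \varphi\rangle = \langle \mu, \varphi \circ \mathfrak{R}_\alpha\rangle$ for every $\alpha$.

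Next I would average this identity over $\alpha \in [0,2\pi)$: since the left-hand side is independent of $\alpha$,
\begin{equation*}
\langle \mu, \varphi\rangle = \frac{1}{2\pi}\int_0^{2\pi} \langle \mu, \varphi \circ \mathfrak{R}_\alpha\rangle \, d\alpha = \frac{1}{2\pi}\int_0^{2\pi} \int_{\RR^3} \varphi \circ \mathfrak{R}_\alpha(y) \, d\mu(y) \, d\alpha.
\end{equation*}
To conclude, one interchanges the order of integration, which is legitimate because $\mu$ is a finite measure and $(\alpha, y) \mapsto \varphi \circ \mathfrak{R}_\alpha(y)$ is bounded (by $\|\varphi\|_{L^\infty}$ if $\varphi$ vanishes at infinity, or one may first reduce to compactly supported $\varphi$ by density and an exhaustion, though boundedness and finiteness of $\mu$ already suffice) and jointly measurable; this gives
\begin{equation*}
\langle \mu, \varphi\rangle = \int_{\RR^3} \Big( \frac{1}{2\pi}\int_0^{2\pi} \varphi \circ \mathfrak{R}_\alpha(y) \, d\alpha \Big) d\mu(y) = \int_{\RR^3} \varphi_{\axi}(y) \, d\mu(y) = \langle \mu, \varphi_{\axi}\rangle,
\end{equation*}
by the definition \eqref{phi-axi:def} of $\varphi_{\axi}$.

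The only genuinely delicate point is the application of Fubini's theorem, i.e. checking joint measurability and integrability of $(\alpha,y)\mapsto \varphi\circ\mathfrak{R}_\alpha(y)$ against the product of Lebesgue measure on $[0,2\pi)$ and the finite Radon measure $\mu$; this is routine since $\varphi$ is continuous and the rotation action is continuous in $(\alpha,y)$, so the integrand is continuous, hence measurable, and bounded, while $2\pi\,\|\mu\|_{\mathscr{M}(\RR^3)} < \infty$. One should also note in passing that $\varphi_{\axi} \in C^0(\RR^3)$, so that the right-hand pairing $\langle\mu,\varphi_{\axi}\rangle$ is well-defined; this follows from dominated convergence (continuity) and the fact that $\mathfrak{R}_\alpha$ preserves the property of vanishing at infinity. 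Everything else is a direct chain of the change-of-variables identity and the definition of axisymmetry.
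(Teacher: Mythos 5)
Your proposal is correct and follows essentially the same route as the paper: use axisymmetry together with the push-forward change-of-variables identity to get $\langle \mu,\varphi\rangle = \langle\mu,\varphi\circ\mathfrak{R}_\alpha\rangle$ for each $\alpha$, average over $\alpha\in[0,2\pi)$, and exchange the $d\alpha$-integral with the pairing to recognize $\varphi_{\axi}$. The only difference is that you spell out the Fubini justification (joint continuity, boundedness, finiteness of $\mu$) that the paper leaves implicit as a ``straightforward computation,'' which is a welcome addition rather than a deviation.
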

\begin{proof}
The definition of axisymmetric measure and the identity \ref{axi-identity} allow us to write
\begin{equation*}
\langle\mu,\varphi\rangle=\langle\mathfrak{R}_{-\alpha}\mu,\varphi\rangle, \quad\forall\alpha\in[0,2\pi).
\end{equation*}
Then, straightforward computation yield
\begin{eqnarray*}
\langle\mu,\varphi\rangle&=&\frac{1}{2\pi}\int_0^{2\pi}\langle\mathfrak{R}_{-\alpha}\mu,\varphi\rangle d\alpha\\
&=&\frac{1}{2\pi}\int_0^{2\pi}\langle\mu,\varphi\circ\mathfrak{R}_{\alpha}\rangle d\alpha\\
&=&\langle\mu,\frac{1}{2\pi}\int_0^{2\pi}\varphi\circ\mathfrak{R}_{\alpha} d\alpha\rangle\\
&=&\langle\mu,\varphi_{\axi}\rangle.
\end{eqnarray*}
The lemma is then proved.
\end{proof}
\begin{Rema}
We point out again that the function $\varphi_{\axi}$ is  indeed axisymmetric due to the following elementary computation, valid for all $\theta\in [0,2\pi)$
\begin{eqnarray*}
\varphi_{\axi}\circ\mathfrak{R}_\theta&=&\frac{1}{2\pi}\int_0^{2\pi}\varphi\circ\mathfrak{R}_{\alpha+\theta}d\alpha\\
&=&\frac{1}{2\pi}\int_\theta^{2\pi+\theta}\varphi\circ\mathfrak{R}_{\gamma}d\gamma\\
&=&\frac{1}{2\pi}\int_0^{2\pi}\varphi\circ\mathfrak{R}_{\gamma}d\gamma\\
&=& \varphi_{\axi}.
\end{eqnarray*}  
Lemma \ref{lemma:action:axi} says then that, when we deal with axisymmetric measures, we can restrict our test functions to be axisymmetric ones.
\end{Rema}
Now, let us define the function $\mathfrak{F}$ as the mapping from $\Omega\times [0,2\pi)$ into $\RR^3$, with $\Omega=(0,\infty) \times \RR$, given by 
\begin{equation}\label{F-r-z-theta}
\mathfrak{F}(r,z,\theta) \triangleq \mathfrak{R}_\theta \cdot (r,0,z)^t= (r\cos \theta, r\sin \theta,z).
\end{equation}

The following proposition will serve latter to prove our main Theorem \ref{thm-mesure-2}.
\begin{prop}\label{prop:mu-axi}
Let $\mu$ be an axisymmetric measure in $\mathscr{M}(\RR^3)$. Then, the mapping $\widetilde{\mu}$ defined on $C^0(\Omega)$ as 
\begin{equation}\label{Def-mu-tilde}
\left\{ \begin{array}{c}
\left\langle \widetilde{\mu}, \psi\right\rangle \triangleq   \displaystyle \int_{\RR^3} \phi_\psi d\mu,\qquad\forall\psi\in C^0(\Omega)\\~~\\
\phi_\psi(x,y,z) \triangleq \psi(\sqrt{x^2+y^2},z),
\end{array}\right.
\end{equation}
belongs to $\mathscr{M}(\Omega)$ and satisfies, for any function $\varphi \in C^0(\RR^3)$ and for all $\theta \in [0,2\pi)$
\begin{equation}\label{identity-axi-omega}
\int_\Omega \varphi_{\axi} \circ \mathfrak{F}(r,z,\theta) d\tilde{\mu}(r,z)=\int_\Omega \varphi_{\axi} \circ \mathfrak{F}(r,z,0) d\tilde{\mu}(r,z) =   \int_{\RR^3} \varphi d\mu,
\end{equation}
where, $\varphi_{\axi}$ is given by \eqref{phi-axi:def}.
Moreover, we have that
\begin{equation}\label{norm-axi-omega}
\|\widetilde{\mu} \|_{\mathscr{M}(\Omega)}=\| \mu  \|_{\mathscr{M}(\RR^3)}.
\end{equation}
Furthermore, the following holds. If $\mu= \mu_{ac}+ \mu_{pp}+ \mu_{sc}$ is the Lebesgue decomposition of $\mu$ with $\mu_{ac}, \mu_{pp}$ and $\mu_{sc}$ denote, respectively, the absolute continuous part of $\mu$, its atomic and its singular continuous part, then, $\widetilde{\mu}= \widetilde{\mu}_{ac}+ \widetilde{\mu}_{pp}+ \widetilde{\mu}_{sc}$ with
 $$ \|\widetilde{\mu}_{ac}\|_{\mathscr{M}(\Omega)} =  \|\mu_{ac}\|_{\mathscr{M}(\RR^3)} ,$$
 $$ \|\widetilde{\mu}_{pp}\|_{\mathscr{M}(\Omega)} =   \|\mu_{pp}\|_{\mathscr{M}(\RR^3)},$$
 $$ \|\widetilde{\mu}_{sc}\|_{\mathscr{M}(\Omega)} =   \|\mu_{sc}\|_{\mathscr{M}(\RR^3)}.$$
\end{prop}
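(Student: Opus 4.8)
The plan is to verify each claim in Proposition \ref{prop:mu-axi} in turn, working from the definition of $\widetilde\mu$ via the push-forward machinery of Definition \ref{pushforward-def} and Theorem \ref{change of variables formula}. First I would observe that the map $\Pi\colon\RR^3\to\Omega$ defined by $\Pi(x,y,z)=(\sqrt{x^2+y^2},z)$ is continuous and proper enough that $\phi_\psi=\psi\circ\Pi$ lies in $C^0(\RR^3)$ whenever $\psi\in C^0(\Omega)$ (here one uses that $\psi$ vanishes on $\partial\Omega=\{r=0\}$ and at infinity, so $\phi_\psi$ vanishes on the $(oz)$-axis and at infinity). Hence $\langle\widetilde\mu,\psi\rangle=\int_{\RR^3}\psi\circ\Pi\,d\mu=\int_\Omega\psi\,d(\Pi_\star\mu)$, which shows at once that $\widetilde\mu=\Pi_\star\mu$ is a well-defined positive (or, after Jordan decomposition, signed/finite) Radon measure on $\Omega$, i.e. $\widetilde\mu\in\mathscr M(\Omega)$, and that $\widetilde\mu$ is linear and bounded on $C^0(\Omega)$.

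Next I would prove the norm identity \eqref{norm-axi-omega}. The inequality $\|\widetilde\mu\|_{\mathscr M(\Omega)}\le\|\mu\|_{\mathscr M(\RR^3)}$ is immediate since $\|\phi_\psi\|_{L^\infty(\RR^3)}=\|\psi\|_{L^\infty(\Omega)}$, so $|\langle\widetilde\mu,\psi\rangle|\le\|\mu\|_{\mathscr M(\RR^3)}\|\psi\|_{L^\infty(\Omega)}$. For the reverse inequality I would use Lemma \ref{lemma:action:axi}: since $\mu$ is axisymmetric, for any $\varphi\in C^0(\RR^3)$ we have $\langle\mu,\varphi\rangle=\langle\mu,\varphi_\axi\rangle$, and $\varphi_\axi$ is axisymmetric, hence factors as $\varphi_\axi=\psi\circ\Pi$ for a unique $\psi\in C^0(\Omega)$ with $\|\psi\|_{L^\infty(\Omega)}=\|\varphi_\axi\|_{L^\infty(\RR^3)}\le\|\varphi\|_{L^\infty(\RR^3)}$; thus $|\langle\mu,\varphi\rangle|=|\langle\widetilde\mu,\psi\rangle|\le\|\widetilde\mu\|_{\mathscr M(\Omega)}\|\varphi\|_{L^\infty(\RR^3)}$, and taking the supremum over $\varphi$ gives the claim. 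The identity \eqref{identity-axi-omega} follows along the way: because $\varphi_\axi\circ\mathfrak F(r,z,\theta)=\varphi_\axi\circ\mathfrak F(r,z,0)$ (as $\varphi_\axi$ is rotation-invariant, by the Remark after Lemma \ref{lemma:action:axi}) and this common value equals $\psi(r,z)$ with $\psi\circ\Pi=\varphi_\axi$, the change-of-variables formula \eqref{G-ch-v} gives $\int_\Omega\varphi_\axi\circ\mathfrak F(r,z,\theta)\,d\widetilde\mu=\langle\widetilde\mu,\psi\rangle=\langle\mu,\varphi_\axi\rangle=\langle\mu,\varphi\rangle$, the last step again by Lemma \ref{lemma:action:axi}.

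Finally, for the compatibility of $\widetilde{(\cdot)}$ with the Lebesgue decomposition, I would first check that each part $\mu_{ac},\mu_{sc},\mu_{pp}$ is itself axisymmetric: applying the push-forward $\mathfrak R_\alpha$ to \eqref{decomposition} and using that $\mathfrak R_\alpha$ preserves each of the three classes (it is a diffeomorphism, so it preserves absolute continuity, sets of Lebesgue measure zero, and atoms) together with the uniqueness of the decomposition, one gets $\mathfrak R_\alpha\mu_{ac}=\mu_{ac}$ and similarly for the other two parts. Then $\widetilde\mu=\widetilde{\mu_{ac}}+\widetilde{\mu_{sc}}+\widetilde{\mu_{pp}}$ by linearity of $\Pi_\star$, and this is the Lebesgue decomposition of $\widetilde\mu$: $\widetilde{\mu_{pp}}=\Pi_\star\mu_{pp}$ is atomic (the image under $\Pi$ of the atom-supporting set of $\mu_{pp}$ is a countable set in $\Omega$, using that for axisymmetric $\mu_{pp}$ the atoms come in circles of constant weight which $\Pi$ collapses to single points), and $\widetilde{\mu_{ac}}$, $\widetilde{\mu_{sc}}$ have no atoms and $\widetilde{\mu_{ac}}$ is absolutely continuous with respect to $drdz$ (one can identify its density explicitly from the coarea/disintegration in cylindrical coordinates: if $d\mu_{ac}=f(r,z)\,rdrdz d\theta/(2\pi)$ in the usual abuse, then $d\widetilde{\mu_{ac}}=f(r,z)\,r\,drdz$), so by uniqueness of the decomposition the three pieces match up; the norm identities then follow by applying \eqref{norm-axi-omega} to each part separately. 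The main obstacle I anticipate is the bookkeeping in this last step — precisely justifying that $\Pi_\star$ sends $\mathscr M_{ac}$, $\mathscr M_{sc}$, $\mathscr M_{pp}$ of axisymmetric measures into $\mathscr M_{ac}(\Omega)$, $\mathscr M_{sc}(\Omega)$, $\mathscr M_{pp}(\Omega)$ respectively, which requires a disintegration-of-measures argument in cylindrical coordinates rather than a one-line appeal; everything else is a routine consequence of Theorem \ref{change of variables formula} and Lemma \ref{lemma:action:axi}.
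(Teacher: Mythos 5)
Your proof follows essentially the same route as the paper's: you read $\widetilde{\mu}$ as the push-forward of $\mu$ under $(x,y,z)\mapsto(\sqrt{x^2+y^2},z)$, get $\|\widetilde{\mu}\|_{\mathscr{M}(\Omega)}\le\|\mu\|_{\mathscr{M}(\RR^3)}$ from $\|\phi_\psi\|_{L^\infty}=\|\psi\|_{L^\infty}$, and obtain the converse inequality and \eqref{identity-axi-omega} from Lemma \ref{lemma:action:axi} combined with the rotation invariance of $\varphi_{\axi}$ — exactly the authors' argument. One small point of hygiene: the function $\psi(r,z)=\varphi_{\axi}(r,0,z)$ through which you factor $\varphi_{\axi}$ is bounded and continuous but need not vanish at $r=0$, so it is not in $C^0(\Omega)$ as the paper defines that space; the bound $|\int_\Omega\psi\,d\widetilde{\mu}|\le\|\widetilde{\mu}\|_{\mathscr{M}(\Omega)}\|\psi\|_{L^\infty}$ still holds because $\widetilde{\mu}$ is a finite measure, but this should be said (the paper is equally silent).

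The one substantive wrinkle is in your treatment of the Lebesgue decomposition. The claim that ``for axisymmetric $\mu_{pp}$ the atoms come in circles of constant weight'' is incompatible with the standard notion of atom: a finite measure cannot have uncountably many atoms of equal nonzero mass, so the genuinely atomic part of an axisymmetric $\mu\in\mathscr{M}(\RR^3)$ is supported on the axis $(oz)$, where every $\psi\in C^0(\Omega)$ vanishes (indeed, for $\mu=\delta_0$ one gets $\widetilde{\mu}=0$, so even \eqref{norm-axi-omega} requires that $\mu$ not charge the axis). The uniform measures on circles, which are what actually push forward to Dirac masses in $\Omega$, sit in the \emph{singular continuous} part of the Lebesgue decomposition on $\RR^3$. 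So your assertion that $\Pi_\star$ maps each of the three classes into the corresponding class of $\mathscr{M}(\Omega)$ cannot be closed by the one-line appeal to ``rotations preserve atoms''; it needs either the nonstandard reading of ``atomic'' implicit in the paper's $\delta_A$ example (mass uniformly distributed on circles about the axis) or an explicit hypothesis excluding the axis. To be fair, the paper's own proof dismisses this part as ``straightforward'' and its example of an atomic axisymmetric measure carries the same ambiguity, so your sketch is no less rigorous than the original — but this is the step where a genuinely careful write-up (via disintegration in cylindrical coordinates, as you anticipate) is actually needed.
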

\begin{Rema}
Remark that, in view of Lemma \ref{lemma:action:axi}, the equality  \eqref{identity-axi-omega} yields, for any axisymmetric function   $\varphi \in C^0(\RR^3)$ and for all $\theta \in [0,2\pi)$
\begin{equation}\label{identity-axi-omega:222}
\int_\Omega \varphi \circ \mathfrak{F}(r,z,\theta) d\tilde{\mu}(r,z)=\int_\Omega \varphi \circ \mathfrak{F}(r,z,0) d\tilde{\mu}(r,z) =   \int_{\RR^3} \varphi d\mu,
\end{equation} 
\end{Rema}
\begin{proof}
From the definition of $\widetilde{\mu}$, we can easily check that it belongs to $\mathscr{M}(\Omega)$. We shall then focus on the proof of \eqref{identity-axi-omega}. Remark first that  the fact that $\varphi_{\axi}$ is axisymmetric insures  
\begin{equation*}
\varphi_{\axi}\circ \mathfrak{F}(r,z,\theta) = \varphi_{\axi}\circ \mathfrak{F}(r,z,0)   ,\quad \forall (r,z,\theta)\in \Omega\times[0,2\pi).
\end{equation*} 
which is a direct consequence of the fact that
\begin{equation}\label{AAAAA}
\varphi_{\axi} (x,y,z) = \varphi_{\axi}(\sqrt{x^2+y^2},0,z) =\varphi_{\axi} \circ \mathfrak{F}( \sqrt{x^2+y^2},z,0)  ,\quad \forall (x,y,z)\in \RR^3.
\end{equation}  The first equality on the l.h.s of   \eqref{identity-axi-omega} follows then. For the second equality, we only have to use the definition of $\widetilde{\mu}$   together with \eqref{AAAAA} to infer that 
\begin{equation*}
\int_\Omega \varphi_{\axi} \circ \mathfrak{F}(r,z,0) d\tilde{\mu}(r,z) =    \int_{\RR^3} \varphi_{\axi} d\mu.
\end{equation*} 
Consequently, the equality on the r.h.s on \eqref{identity-axi-omega} follows by applying Lemma \ref{lemma:action:axi}.
 
 \vspace{2mm} Now, concerning the size of $\widetilde{\mu}$, we only outline the proof of \eqref{norm-axi-omega}, meanwhile, the proof of the estimates for the decomposed parts is straightforward (see also the two examples below). From the definition of $\widetilde{\mu}$, it is easy to check that 
$$\|\widetilde{\mu} \|_{\mathscr{M}(\Omega)}\leqslant \| \mu  \|_{\mathscr{M}(\RR^3)} .$$
On the other hand, \eqref{identity-axi-omega} provides the converse inequality
\begin{equation*}
\| \mu  \|_{\mathscr{M}(\RR^3)} \leqslant \|\widetilde{\mu} \|_{\mathscr{M}(\Omega)}.
\end{equation*}
This ends the proof of Proposition \ref{prop:mu-axi}
\end{proof} 
For the sake of clarity, we provide the following two typical examples:
\begin{itemize}
\item {\it An example in $\mathscr{M}_{ac}(\RR^3):$} If $\mu$ is an axisymmetric measure in $  \mathscr{M}_{ac}(\RR^3)$, then, its associated density $f_\mu$ is an axisymmetric function in $L^1(\RR^3)$. In this case, $\widetilde{\mu}$ is the measure in $\mathscr{M}(\Omega)$ associated to the $L^1(\Omega)$-density function $ f _{\widetilde{\mu}}$ give by
\begin{equation*}
f _{\widetilde{\mu}}(r,z) = 2\pi r\, f_\mu(r,0,z).
\end{equation*}
\item {\it An example in $ \mathscr{M}_{pp}(\RR^3)$:} We saw that $\mu=\delta_A$ is an axisymmetric measure if and only if $A$ is invariant by rotation around the axis $(oz)$ (i.e, $A$ is a circle with axis $(oz)$). In this case, we have  
\begin{eqnarray*}
\left\langle \widetilde{\mu},\psi\right\rangle &=& \left\langle \delta_A, \phi_\psi \right\rangle \\
&=& \sum_{ (a_1,a_2,a_3)\in A } \psi (\sqrt{a_1^2+a_2^2},a_3)\\
&=& \left\langle \delta_{\widetilde{A}}, \psi \right\rangle,
\end{eqnarray*}
where, $\widetilde{A}= \left\{(\sqrt{a_1^2+a_2^2} ,a_3):\; (a_1,a_2,a_3)\in A)   \right\}.$
In particular, if $A=(0,0,a)$ then  \mbox{$\widetilde{\mu}= \delta_{(0,a)}$} and more generally, for any $r\geqslant 0$, if $A_r= \cup_{\theta\in [0,2\pi)} \left\{(r\cos \theta, r \sin\theta,a) \right\}$, then $\widetilde{\mu} = \delta_{(r,a)}.$
\end{itemize}

\subsection{Semi-group estimates}\label{Semi-group estimates} In this subsection, we recall some technical results concerning the semi-groups appearing in the study of the Boussinesq system in question. For more details about these results, we refer the reader to \cite{Hanachi-Houamed-Zerguine,Gallay,Gallay-Sverak}. 

\hspace{0.5cm}In the sequel, we shall be using the following notations: For $i\in \{ 1,2\}$, we denote by  $(\Ss_i(t))_{t\geq0}$ the semi-groups defined as the propagators associated with the following two linear equations respectively 
\begin{equation}\label{S1}
\left\{\begin{array}{ll}
\partial_t f -\left( \Delta - \frac{1}{r^2}\right)f=0,\\
f_{|_{t=0}}=f_0.
\end{array}\right.
\end{equation}
\begin{equation}\label{S2}
\left\{\begin{array}{ll}
\partial_t f -  \Delta   f=0,\\
f_{|_{t=0}}=f_0.
\end{array}\right.
\end{equation}

The following propositions, proved with details in \cite{Hanachi-Houamed-Zerguine,Gallay-Sverak}, present some $L^p-L^q$ estimates of the semi--groups $(\Ss_i(t))_{t\geq0}$.
\begin{prop}\label{P,S1,S2} The family $((\Ss_1(t),\Ss_2(t))_{t\ge0}$ associated to \eqref{S1} and \eqref{S2}, respectively, is a strongly continuous semi--group of bounded linear operators in $L^p(\Omega)\times L^p(\Omega) $ for any $p\in[1,\infty]$. Furthermore, for $1\le p\le q\le\infty$ the following assertions hold.
\begin{enumerate}
\item[{\bf(i)}] For $(\omega_0,\rho_0)\in L^p(\Omega)\times L^p(\Omega)$, we have for every $t>0$
\begin{equation}\label{S(t)}
\|(\Ss_1(t)\omega_0,\Ss_2(t)\rho_0)\|_{L^q(\Omega)\times L^q(\Omega)}\le\frac{C}{t^{\frac{1}{p}-\frac{1}{q}}}\|(\omega_0,\rho_0)\|_{L^p(\Omega)\times L^p(\Omega)}.
\end{equation}
\item[{\bf(ii)}] For $f=(f^r,f^z)\in L^p(\Omega)\times L^p(\Omega)$, we have for every $t>0$
\begin{equation}\label{S(t)div}
\|\Ss_1(t)\Div_{\star}f\|_{L^q(\Omega)}\le\frac{C}{t^{\frac{1}{2}+\frac{1}{p}-\frac{1}{q}}}\|f\|_{L^p(\Omega)}.
\end{equation}
\item[{\bf(iii)}] For $f=(f^r,f^z)\in L^p(\Omega)\times L^p(\Omega)$, we have every $t>0$
\begin{equation}\label{S2(t)div}
\|\Ss_2(t)\Div f\|_{L^q(\Omega)}\le\frac{C}{t^{\frac{1}{2}+\frac{1}{p}-\frac{1}{q}}}\|f\|_{L^p(\Omega)}.
\end{equation}
\end{enumerate}
Here, $\Div_{\star}f=\partial_r f^r+\partial_z f^z$ (resp.  $\Div f=\partial_r f^r+\partial_z f^z+\frac{f^r}{r}$) stands to be the divergence operator  defined over $\RR^2$ (resp. the divergence operator defined over $\RR^3$ in the axisymmetric case).
\end{prop}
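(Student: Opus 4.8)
The plan is to reduce the whole statement to sharp pointwise bounds on the integral kernels of $\Ss_1(t)$ and $\Ss_2(t)$. Once those are in hand, assertion \textbf{(i)} follows from the Schur test combined with the Riesz--Thorin interpolation theorem, assertions \textbf{(ii)}--\textbf{(iii)} follow by transferring one derivative from the test field onto the kernel, and the semigroup law and strong continuity are soft consequences. I would work throughout with the explicit heat-type representations rather than with abstract functional calculus, since the gain of $t^{-1/2}$ in \textbf{(ii)}--\textbf{(iii)} is most transparent at the kernel level.

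First I would record the kernels. Since $\Delta=\partial_r^2+\frac1r\partial_r+\partial_z^2$ is the Laplacian acting on axisymmetric functions, $\Ss_2(t)$ is the restriction of the three-dimensional heat semigroup to axisymmetric profiles; writing the Gaussian in cylindrical coordinates and integrating out the angular variable gives
\begin{equation*}
(\Ss_2(t)g)(r,z)=\int_\Omega G_2(t,r,r',z-z')\,g(r',z')\,dr'\,dz',\qquad G_2(t,r,r',\zeta)=\frac{2\pi r'}{(4\pi t)^{3/2}}\,e^{-\frac{r^2+r'^2+\zeta^2}{4t}}\,I_0\!\Big(\frac{rr'}{2t}\Big),
\end{equation*}
with $I_0$ the modified Bessel function of the first kind of order $0$. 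In the same way $\Ss_1(t)$ is the $\vec e_\theta$-component of the vector heat flow applied to $\omega_\theta\vec e_\theta$, and the same computation, now carrying the extra factor $\cos\phi$ in the angular integral, produces the kernel $G_1$ with $I_0$ replaced by $I_1$. Next I would use $e^{-\frac{r^2+r'^2}{4t}}e^{\frac{rr'}{2t}}=e^{-\frac{(r-r')^2}{4t}}$ together with the elementary bounds $0\le I_1(s)\le I_0(s)\le C(1+s)^{-1/2}e^{s}$, and prove the key pointwise estimate $r'\,(1+\tfrac{rr'}{2t})^{-1/2}\,e^{-(r-r')^2/8t}\le C\sqrt t$, valid uniformly in $r,r',t$ (splitting into the cases $r'\ge 2r$ and $r'<2r$ and exploiting the Gaussian in the former). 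This yields the clean heat-kernel-type bound, adapted to the measure $dr\,dz$ on $\Omega$,
\begin{equation*}
0\le G_i(t,r,r',\zeta)\le \frac{C}{t}\,e^{-c\frac{(r-r')^2+\zeta^2}{t}},\qquad i\in\{1,2\},
\end{equation*}
and, after differentiating the explicit formulas once and regrouping the resulting terms, the companion bounds $|\partial_{r'}G_i|+|\partial_{z'}G_i|\le Ct^{-3/2}e^{-c\frac{(r-r')^2+\zeta^2}{t}}$ and $G_2(t,r,r',\zeta)/r'\le Ct^{-3/2}e^{-c\frac{(r-r')^2+\zeta^2}{t}}$; that is, differentiating, or removing the weight $r'$, costs exactly a factor $t^{-1/2}$.

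With these bounds available, \textbf{(i)} is obtained from the Schur test: integrating $G_i$ over the first, respectively the second, pair of variables gives $\|\Ss_i(t)\|_{L^1\to L^1}+\|\Ss_i(t)\|_{L^\infty\to L^\infty}\le C$, whereas $\sup G_i\le C/t$ gives $\|\Ss_i(t)\|_{L^1\to L^\infty}\le C/t$; Riesz--Thorin interpolation between these three endpoints produces the exponent $\tfrac1p-\tfrac1q$. For \textbf{(ii)} and \textbf{(iii)} I would integrate by parts in $(r',z')$, the boundary contributions at $r'=0$ and at infinity vanishing thanks to the factor $r'$ and the Gaussian decay; this represents $\Ss_i(t)\Div_{\star}f$ and the $\tfrac{f^r}{r}$-part of $\Ss_2(t)\Div f$ through the kernels $\partial_{r'}G_i,\ \partial_{z'}G_i,\ G_2/r'$, and rerunning the previous Schur/interpolation argument with these kernels (endpoint bounds $Ct^{-1/2},\,Ct^{-1/2},\,Ct^{-3/2}$) delivers the extra $t^{-1/2}$. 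Finally, the semigroup identity is the Chapman--Kolmogorov relation for the kernels (equivalently, uniqueness for \eqref{S1}--\eqref{S2}), and strong continuity on $L^p(\Omega)$ for $1\le p<\infty$ follows from the uniform bounds of \textbf{(i)} with $q=p$ together with the convergence $\Ss_i(t)g\to g$ for $g$ in a dense subset, which is read off from the kernels (for $p=\infty$ one only has, as usual for the heat semigroup, weak-$\ast$ continuity).

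The part I expect to be the real work is the second step: one has to control the weight $r'$ and the modified-Bessel factor $e^{-rr'/2t}I_0(rr'/2t)$ --- together with their first derivatives --- uniformly up to the symmetry axis $r=0$, where a naive use of the Gaussian alone degenerates and one must genuinely use the decay of the Bessel factor. After that, the conclusions are a routine combination of the Schur test and interpolation, in the spirit of \cite{Gallay,Gallay-Sverak,Hanachi-Houamed-Zerguine}.
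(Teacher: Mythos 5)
Your proof is correct and follows essentially the same route as the sources the paper cites for this proposition (\cite{Gallay-Sverak,Hanachi-Houamed-Zerguine}): the paper itself gives no proof here, and those references obtain the bounds from exactly these Bessel-kernel representations (the kernel of $\Ss_1$ with the factor $e^{-s}I_1(s)$, $s=rr'/2t$, is the formula quoted at the end of Section 3) together with pointwise estimates and Young/Schur-type arguments. The one step deserving explicit care is the bound $|\partial_{r'}G_i|\lesssim t^{-3/2}e^{-c((r-r')^2+\zeta^2)/t}$: after factoring the Gaussian $e^{-(r-r')^2/4t}$ out of $e^{-(r^2+r'^2)/4t}I_\nu(rr'/2t)$ one needs $s\,\bigl|\bigl(e^{-s}I_\nu(s)\bigr)'\bigr|\le C$ uniformly in $s\ge 0$, which is a standard Bessel asymptotic and makes each of the three terms produced by the differentiation individually of the claimed size, so your "regrouping" indeed closes.
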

Next, we recall the following weighted estimates from \cite{Hanachi-Houamed-Zerguine} in the spirit of Proposition 3.5 in \cite{Gallay-Sverak}.
\begin{prop}\label{Prop-W-E-1} Let $ 1\leq p \leq q \leq \infty$, $i\in \{ 1,2\}$ and $(\alpha,\beta)\in[-1,2]$, with $\alpha\leq \beta$. Assume that $r^{\beta}f\in L^p(\Omega)$, then
\begin{equation}\label{Eq:1-W-est-S1}
\|r^{\alpha}\Ss_i(t) f\|_{L^q(\Omega)}\le \frac{C}{t^{\frac1p-\frac1q+\frac{(\beta-\alpha)}{2}}}\|r^{\beta}f\|_{L^p(\Omega)}.
\end{equation}
In addition, if $(\alpha,\beta)\in[-1,1]$, $\alpha\leq \beta$ and $r^{\beta}f\in L^p(\Omega)$, then
\begin{equation}\label{Eq:2-W-est-S1}
\|r^{\alpha}\Ss_i(t)\Div_{\star} f \|_{L^q(\Omega)}\le \frac{C}{t^{\frac12+\frac1p-\frac1q+\frac{(\beta-\alpha)}{2}}}\|r^{\beta}f\|_{L^p(\Omega)}.
\end{equation}
\end{prop}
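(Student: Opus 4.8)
\emph{Strategy.} I would derive both inequalities from pointwise Gaussian bounds on the integral kernels of $\Ss_1(t)$ and $\Ss_2(t)$, and then wrap up with Young's/Schur's inequality. Recall that $\Ss_2(t)=e^{t\Delta}$, with $\Delta=\partial_r^2+\frac1r\partial_r+\partial_z^2$, is the restriction to axisymmetric profiles of the $3$D heat semigroup; averaging the Gaussian over the angular variable gives $\Ss_2(t)f(r,z)=\int_\Omega K^{(2)}_t(r,z,r',z')\,f(r',z')\,dr'dz'$ with $K^{(2)}_t$ the classical kernel built from the Bessel function $I_0$. The substitution $\Pi=\omega_\theta/r$ of \eqref{Pi} identifies $\Ss_1(t)f=r\,e^{t\mathcal L}(f/r)$, where $\mathcal L=\Delta+\frac2r\partial_r=\partial_r^2+\frac3r\partial_r+\partial_z^2$ is (the generator of $\Pi$, equivalently) a radial Laplacian in dimension five, so $K^{(1)}_t$ is the associated Bessel kernel; moreover, since $-1/r^2\le 0$, a Feynman--Kac argument gives $|\Ss_1(t)g|\le \Ss_2(t)|g|$ pointwise, which lets one transfer \eqref{Eq:1-W-est-S1} for $i=1$ from the case $i=2$. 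From the asymptotics of the Bessel functions — this is exactly the content of the kernel bounds proved in \cite{Gallay-Sverak} and \cite{Hanachi-Houamed-Zerguine} — one has, for $i\in\{1,2\}$,
\begin{equation*}
0\le K^{(i)}_t(r,z,r',z')\le \frac{C}{t}\,\Theta_i\!\Big(\tfrac{r}{\sqrt t},\tfrac{r'}{\sqrt t}\Big)\,e^{-\frac{(r-r')^2+(z-z')^2}{Ct}},\qquad \big|\nabla_{(r',z')}K^{(i)}_t\big|\le \frac{C}{t\sqrt t}\,\Theta_i\!\Big(\tfrac{r}{\sqrt t},\tfrac{r'}{\sqrt t}\Big)\,e^{-\frac{(r-r')^2+(z-z')^2}{Ct}},
\end{equation*}
with $0\le\Theta_i\le 1$ encoding the axis anisotropy: $\Theta_i(a,b)\lesssim\min\{1,b\}$ for both $i$ (linear vanishing as $r'\to 0$), and $\Theta_1(a,b)\lesssim a\,b^2$ as $a\to 0$ (the extra vanishing forced by the repulsive potential).

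\emph{Proof of \eqref{Eq:1-W-est-S1}.} Set $f=r^{-\beta}g$, $g=r^\beta f\in L^p(\Omega)$, so that $r^\alpha\Ss_i(t)f$ is the image of $g$ under the operator with kernel $\widetilde K_t:=r^\alpha r'^{-\beta}K^{(i)}_t$. The crux is the pointwise bound $\widetilde K_t(r,z,r',z')\le C\,t^{(\alpha-\beta)/2}\,\frac1t\,e^{-\frac{(r-r')^2+(z-z')^2}{Ct}}$, which I would check by splitting $\Omega\times\Omega$ into two regions. On $\{r\gtrsim\sqrt t\}$ the Gaussian core $|r-r'|+|z-z'|\lesssim\sqrt t$ forces $r'\sim r$, hence $r^\alpha r'^{-\beta}\sim r^{\alpha-\beta}\le C\,t^{(\alpha-\beta)/2}$ because $\alpha-\beta\le 0$, and $\Theta_i\le 1$. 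On $\{r\lesssim\sqrt t\}$ (where then also $r'\lesssim\sqrt t$) one uses the axis vanishing of $\Theta_i$; e.g. for $i=1$,
\begin{equation*}
\widetilde K_t\lesssim \frac{1}{t^{5/2}}\,r^{\alpha+1}r'^{2-\beta}\,e^{-\frac{(r-r')^2+(z-z')^2}{Ct}}\lesssim \frac{1}{t^{5/2}}\,t^{(\alpha+1)/2}t^{(2-\beta)/2}\,e^{-\frac{(r-r')^2+(z-z')^2}{Ct}}=\frac{1}{t}\,t^{(\alpha-\beta)/2}\,e^{-\frac{(r-r')^2+(z-z')^2}{Ct}},
\end{equation*}
where $r^{\alpha+1}\lesssim t^{(\alpha+1)/2}$ and $r'^{2-\beta}\lesssim t^{(2-\beta)/2}$ use precisely $\alpha\ge-1$ and $\beta\le 2$. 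Since the resulting majorant depends on $(r-r',z-z')$ only, Young's convolution inequality on $\RR^2$ gives $\|r^\alpha\Ss_i(t)f\|_{L^q}\le C\,t^{(\alpha-\beta)/2}\,t^{-(\frac1p-\frac1q)}\|g\|_{L^p}$, i.e. \eqref{Eq:1-W-est-S1}. (In the cases where $\Theta_2$ does not by itself absorb a negative power of $r$, one uses Schur's test instead of Young, which only needs the integrability $\int_0^{\sqrt t} r^{\alpha}\,dr<\infty$ and $\int_0^{\sqrt t} r'^{1-\beta}\,dr'<\infty$.)

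\emph{Proof of \eqref{Eq:2-W-est-S1}.} For $f=(f^r,f^z)$ one integrates by parts in $(r',z')$, so that $\Ss_i(t)\Div_{\star}f(r,z)=-\int_\Omega\big((\partial_{r'}K^{(i)}_t)f^r+(\partial_{z'}K^{(i)}_t)f^z\big)\,dr'dz'$; the boundary contribution on $\{r'=0\}$ vanishes because $K^{(i)}_t(r,z,0,z')=0$ (this is the linear vanishing $\Theta_i\lesssim b$), and the ones at infinity vanish by the Gaussian decay. One then runs the previous argument with $K^{(i)}_t$ replaced by $\nabla_{(r',z')}K^{(i)}_t$: by the second kernel bound this costs one extra factor $t^{-1/2}$, and differentiating the kernel in $r'$ costs one order of axis-vanishing, which is exactly why the $\beta$–range must shrink from $[-1,2]$ to $[-1,1]$ in order to keep the trade $r^\alpha r'^{-\beta}\Theta_i\le C\,t^{(\alpha-\beta)/2}$ (and the corresponding integrability) valid for the differentiated kernel. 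This produces the factor $t^{-(\frac12+\frac1p-\frac1q+\frac{\beta-\alpha}{2})}$, i.e. \eqref{Eq:2-W-est-S1}.

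\emph{Main obstacle.} The functional-analytic part (Young/Schur, the integration by parts) is routine; the heart of the matter is the kernel input. One must extract from the Bessel asymptotics the sharp near-axis behaviour of $K^{(i)}_t$ and of $\nabla_{(r',z')}K^{(i)}_t$, and then verify the bookkeeping $r^\alpha r'^{-\beta}\Theta_i(r/\sqrt t,r'/\sqrt t)\le C\,t^{(\alpha-\beta)/2}$ uniformly over the full admissible ranges — the endpoints $\alpha=-1$, $\beta=2$ (resp. $\beta=1$) and the regime $r\lesssim\sqrt t$ being the tight cases. Between the two semigroups, $\Ss_2$ is the more delicate: it enjoys only the single linear vanishing $\Theta_2(a,b)\lesssim b$ at the axis, so there is less margin than for $\Ss_1$ and some borderline cases must be handled through integrability rather than pointwise domination.
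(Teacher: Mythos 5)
The paper does not actually prove Proposition \ref{Prop-W-E-1}: it is recalled verbatim from \cite{Hanachi-Houamed-Zerguine}, ``in the spirit of Proposition 3.5 in \cite{Gallay-Sverak}''. Your strategy --- explicit kernels with Gaussian bounds carrying an axis-vanishing factor, the trade $r^{\alpha}r'^{-\beta}\Theta_i(r/\sqrt{t},r'/\sqrt{t})\lesssim t^{(\alpha-\beta)/2}$ split according to $r\gtrsim\sqrt{t}$ or $r\lesssim\sqrt{t}$, and Young/Schur to conclude, with an integration by parts in $(r',z')$ for the $\Div_{\star}$ variant --- is precisely the argument of those references, and your identification $\Ss_1(t)f=r\,e^{t\mathcal{L}}(f/r)$ with $\mathcal{L}$ the five-dimensional Laplacian is exactly how the $\mathscr{N}_1$-kernel of $\Ss_1$ (quoted at the end of Section 3 of this paper) is derived. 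So the approach is the right one and the bookkeeping you display for $i=1$ is correct, including at the endpoints $\alpha=-1$, $\beta=2$, where the near-axis vanishing $K^{(1)}_t\lesssim t^{-1}(rr'^2/t^{3/2})e^{-\cdots}$ does all the work pointwise.

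Two caveats. First, the Feynman--Kac domination $|\Ss_1(t)g|\le\Ss_2(t)|g|$ transfers the estimate from $i=2$ to $i=1$ only for $\alpha\ge 0$: for $\alpha<0$ it is $\Ss_1$ that behaves better than $\Ss_2$, so there the transfer is useless and you must (as you in fact do) use the genuine vanishing of $K^{(1)}_t$ in the output variable $r$. Second, and more substantively: for $i=2$ your mechanism does not close when $\alpha<0$, and it cannot, because $K^{(2)}_t$ has no vanishing in $r$ (only the linear vanishing in $r'$), so $r^{\alpha}\Ss_2(t)f\sim c\,r^{\alpha}$ near the axis for generic $f$ and the left-hand side of \eqref{Eq:1-W-est-S1} is infinite for $q=\infty$, $\alpha<0$ (and for $\alpha q\le -1$ in general); consistently, your own Schur condition $\int_0^{\sqrt{t}}r^{\alpha}\,dr<\infty$ fails at $\alpha=-1$. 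This is really an over-generous formulation of the proposition (the symmetric range $(\alpha,\beta)\in[-1,2]$ is correct for $i=1$, which is the case treated in \cite{Gallay-Sverak}); every application of the $i=2$ case in this paper has $\alpha\ge 0$, where your argument goes through. It would strengthen your write-up to state explicitly that for $i=2$ you prove the estimate for $0\le\alpha\le\beta\le 2$ and to flag the restriction, rather than listing $\alpha=-1$ among the ``tight but admissible'' endpoints for both semigroups.
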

 The following proposition states some $L^p-L^q$ estimates in the case of   initial data in $\mathcal{M}(\Omega)$. The proof can be found in \cite{Gallay-Sverak}.
\begin{prop}\label{prop mu-lamda} For any $\mu\in\mathscr{M}(\Omega)$, we have 
\begin{equation}\label{Est1-S(t)}
\sup_{t>0}\; t^{1-\frac1q}\|\Ss_1(t)\mu\|_{L^q(\Omega)}\le C\|\mu\|_{\mathscr{M}(\Omega)},\quad q\in[1,\infty]
\end{equation}
and
\begin{equation}\label{Est2-S(t)}
L_{q}(\mu)\triangleq \limsup_{t\uparrow 0}\; t^{1-\frac1q}\|\Ss_1(t)\mu\|_{L^q(\Omega)}\le C\|\mu_{pp}\|_{\mathscr{M}(\Omega)},\quad q\in(1,\infty],
\end{equation}
where, $\mu_{pp}$ is the atomic part of $\mu.$
\end{prop}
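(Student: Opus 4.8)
The plan is to reduce both estimates to a pointwise Gaussian bound on the integral kernel of $\Ss_1(t)$, and then to feed this bound into Minkowski's integral inequality for \eqref{Est1-S(t)}, and into a concentration/interpolation argument for \eqref{Est2-S(t)}. First I would record that $\Ss_1(t)$ acts on $\mathscr{M}(\Omega)$ through a kernel, $\Ss_1(t)\mu(x)=\int_\Omega G_1(t,x,y)\,d\mu(y)$, and that, since the potential $r^{-2}$ in \eqref{S1} is non-negative, the maximum principle lets one compare $G_1$ with the two-dimensional heat kernel on $\Omega$ (this is exactly the point where the estimates of \cite{Gallay,Gallay-Sverak} are invoked), so that $0\le G_1(t,x,y)\le C\,t^{-1}e^{-|x-y|^2/(Ct)}$ for all $x,y\in\Omega$, $t>0$. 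Because $\Omega$ is endowed with the two-dimensional measure $dr\,dz$, this immediately gives, uniformly in $y\in\Omega$ and $t>0$, that $\|G_1(t,\cdot,y)\|_{L^1(\Omega)}\le C$ and $\|G_1(t,\cdot,y)\|_{L^q(\Omega)}\le C\,t^{-(1-\frac1q)}$ for every $q\in[1,\infty]$.

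Granting this, \eqref{Est1-S(t)} is immediate from Minkowski's integral inequality:
\begin{equation*}
\|\Ss_1(t)\mu\|_{L^q(\Omega)}=\Big\|\int_\Omega G_1(t,\cdot,y)\,d\mu(y)\Big\|_{L^q(\Omega)}\le\int_\Omega\|G_1(t,\cdot,y)\|_{L^q(\Omega)}\,d|\mu|(y)\le\frac{C}{t^{1-\frac1q}}\,\|\mu\|_{\mathscr{M}(\Omega)}.
\end{equation*}
For \eqref{Est2-S(t)} I would first note that $\mu\mapsto L_q(\mu)$ is subadditive and positively homogeneous, and that \eqref{Est1-S(t)} already gives $L_q(\nu)\le C\|\nu\|_{\mathscr{M}(\Omega)}$ for any $\nu$; applying this to $\nu=\mu_{pp}$ and using the Lebesgue decomposition, it suffices to prove $L_q(\mu_c)=0$, where $\mu_c=\mu_{ac}+\mu_{sc}$ is the non-atomic part of $\mu$. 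To this end I introduce the concentration modulus $\eta(\delta)\triangleq\sup_{x_0\in\Omega}|\mu_c|\big(B(x_0,\delta)\cap\Omega\big)$ and observe that $\eta(\delta)\to0$ as $\delta\to0^+$: if not, a bad sequence of balls would, after extracting a limit point (the total mass being finite, hence tight), produce an atom of $|\mu_c|$, contradicting non-atomicity.

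Then, splitting the kernel integral at radius $R\sqrt t$ and using the Gaussian bound of Step~1 together with $e^{-|x-y|^2/(Ct)}\le e^{-R^2/C}$ on $\{|x-y|>R\sqrt t\}$,
\begin{equation*}
\|\Ss_1(t)\mu_c\|_{L^\infty(\Omega)}\le\frac{C}{t}\Big(\eta(R\sqrt t)+e^{-R^2/C}\,\|\mu_c\|_{\mathscr{M}(\Omega)}\Big),
\end{equation*}
while $\|\Ss_1(t)\mu_c\|_{L^1(\Omega)}\le C\|\mu_c\|_{\mathscr{M}(\Omega)}$ from the $q=1$ case above; interpolating via $\|g\|_{L^q}\le\|g\|_{L^1}^{1/q}\|g\|_{L^\infty}^{1-1/q}$ gives, for $q\in(1,\infty]$,
\begin{equation*}
t^{1-\frac1q}\|\Ss_1(t)\mu_c\|_{L^q(\Omega)}\le C\,\|\mu_c\|_{\mathscr{M}(\Omega)}^{1/q}\Big(\eta(R\sqrt t)+e^{-R^2/C}\,\|\mu_c\|_{\mathscr{M}(\Omega)}\Big)^{1-\frac1q}.
\end{equation*}
Sending $t\to0^+$ (so $\eta(R\sqrt t)\to0$) and then $R\to\infty$ yields $L_q(\mu_c)=0$, hence $L_q(\mu)\le L_q(\mu_c)+L_q(\mu_{pp})\le C\|\mu_{pp}\|_{\mathscr{M}(\Omega)}$, which is \eqref{Est2-S(t)}.

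The step I expect to be the genuine work is the kernel estimate in Step~1: obtaining a global-in-space Gaussian upper bound for the semigroup generated by $\Delta-r^{-2}$ on the half-plane, with the correct decay up to the boundary $\{r=0\}$, is operator-specific and is what must be imported from \cite{Gallay-Sverak} (or re-derived from the explicit representation of $\Ss_1$). By contrast, the Minkowski and interpolation steps, and the fact that $\eta(\delta)\to0$ for finite non-atomic measures, are routine.
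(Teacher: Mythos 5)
The paper does not actually prove this proposition --- it is quoted from \cite{Gallay-Sverak} with the remark that ``the proof can be found'' there --- and your argument is essentially the proof given in that reference (which in turn follows the two-dimensional scheme of Giga--Miyakawa--Osada and Gallagher--Gallay): a pointwise Gaussian upper bound for the kernel of $\Ss_1(t)$, Minkowski's integral inequality for \eqref{Est1-S(t)}, and, for \eqref{Est2-S(t)}, the splitting $\mu=\mu_{pp}+\mu_c$ combined with the vanishing of the concentration function $\eta(\delta)$ of the non-atomic part and $L^1$--$L^\infty$ interpolation. All of these steps are sound, including the tightness argument showing $\eta(\delta)\to 0$ and the cancellation of the powers of $t$ in the interpolation. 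The one point I would correct is your stated justification of Step 1: discarding the non-negative potential $r^{-2}$ by the maximum principle compares $G_1$ with the kernel of $\partial_r^2+\partial_z^2+\tfrac1r\partial_r$ on $L^p(\Omega,dr\,dz)$, not with the two-dimensional Gaussian; because of the drift term $\tfrac1r\partial_r$ this comparison kernel, like $G_1$ itself --- which equals $\frac{1}{4\pi t}\,(\tilde r/r)^{1/2}\,\mathscr{N}_1\big(\tfrac{t}{r\tilde r}\big)\,e^{-((r-\tilde r)^2+(z-\tilde z)^2)/(4t)}$, the formula recalled at the end of the proof of Proposition \ref{Continuity of the solution} --- carries the unbounded weight $(\tilde r/r)^{1/2}$, and one must invoke the decay of $\mathscr{N}_1(\tau)$ as $\tau\to\infty$ to absorb that weight into a slightly widened Gaussian. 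Since you explicitly identify the kernel bound as the operator-specific ingredient to be imported from \cite{Gallay-Sverak} (or rederived from the explicit representation), this is a presentational inaccuracy rather than a genuine gap.
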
 
Finally, in the spirit of the previous propositions, we state a quite similar estimates for the semi--group $\Ss_2(t)$ in the space $\RR^3$ instead of $\Omega$. The proof is similar to the proof of Proposition \ref{prop mu-lamda}
\begin{prop}\label{3D-heat kernel} Let $ 1\leq p \leq q \leq \infty$. Assume that $ f\in L^p(\RR^3)$, then
\begin{equation}\label{Eq-3D-heat kernel}
\| \Ss_2(t) f\|_{L^q(\RR^3)}\le \frac{C}{t^{\frac{3}{2}(\frac{1}{p}- \frac{1}{q}) } }\| f\|_{L^p(\RR^3)}.
\end{equation}
Moreover, if $f= \mu \in \mathscr{M}(\RR^3)$, then the above estimate holds by taking $p=1$ and by replacing $\norm f_{L^1(\RR^3)}$ by $\norm f_{\mathscr{M}(\RR^3)}$. In addition of that, the following assertion holds
\begin{equation}\label{Eq-3D-heat kernel-measure}
\widetilde{L}_q(\mu) \triangleq \limsup_{t\rightarrow 0}\; t^{\frac{3}{2}(1- \frac{1}{q}) }\| \Ss_2(t) \mu\|_{L^q(\RR^3)}\le  C\| \mu_{pp}\|_{\mathscr{M}(\RR^3) }, \quad \forall q\neq 1,
\end{equation}
where, $\mu_{pp}$ is the atomic part of $\mu.$
\end{prop}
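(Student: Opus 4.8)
The plan is to follow the same scheme as in the proof of Proposition \ref{prop mu-lamda}, exploiting that $\Ss_2(t)=e^{t\Delta}$ is the classical heat semi-group on $\RR^3$, i.e. $\Ss_2(t)f=G_t\ast f$ with $G_t(x)=(4\pi t)^{-3/2}e^{-|x|^2/4t}$ and the scaling identity $G_t(x)=t^{-3/2}G_1(x/\sqrt t)$, which yields $\|G_t\|_{L^s(\RR^3)}=C_s\,t^{-\frac32(1-\frac1s)}$ for every $s\in[1,\infty]$. For \eqref{Eq-3D-heat kernel} I would simply invoke Young's convolution inequality: if $1\le p\le q\le\infty$ and $\frac1s=1-\frac1p+\frac1q$, then $\|\Ss_2(t)f\|_{L^q}\le\|G_t\|_{L^s}\|f\|_{L^p}=C\,t^{-\frac32(\frac1p-\frac1q)}\|f\|_{L^p}$. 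When $f=\mu\in\mathscr{M}(\RR^3)$ one writes $\Ss_2(t)\mu(x)=\int_{\RR^3}G_t(x-y)\,d\mu(y)$ and applies Minkowski's integral inequality with respect to $|\mu|$ to get $\|\Ss_2(t)\mu\|_{L^q}\le\int_{\RR^3}\|G_t(\cdot-y)\|_{L^q}\,d|\mu|(y)=\|G_t\|_{L^q}\,\|\mu\|_{\mathscr{M}(\RR^3)}=C\,t^{-\frac32(1-\frac1q)}\|\mu\|_{\mathscr{M}(\RR^3)}$, which is \eqref{Eq-3D-heat kernel} for $p=1$; in particular $\sup_{t>0}t^{\frac32(1-\frac1q)}\|\Ss_2(t)\mu\|_{L^q}\le C\|\mu\|_{\mathscr{M}(\RR^3)}$.

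For the refined bound \eqref{Eq-3D-heat kernel-measure} I would use the Lebesgue decomposition $\mu=\mu_{pp}+\mu_c$ with $\mu_c\triangleq\mu_{ac}+\mu_{sc}$ the non-atomic part. Writing $\mu_{pp}=\sum_{n}\lambda_n\delta_{a_n}$, the previous step applied termwise gives $t^{\frac32(1-\frac1q)}\|\Ss_2(t)\mu_{pp}\|_{L^q}\le C\sum_n|\lambda_n|=C\|\mu_{pp}\|_{\mathscr{M}(\RR^3)}$ for all $t>0$. Hence, by the triangle inequality, \eqref{Eq-3D-heat kernel-measure} follows once it is shown that the non-atomic part is negligible, namely
\begin{equation*}
\lim_{t\to0}\;t^{\frac32(1-\frac1q)}\|\Ss_2(t)\mu_c\|_{L^q(\RR^3)}=0,\qquad q\in(1,\infty].
\end{equation*}
By interpolation between $\|\Ss_2(t)\mu_c\|_{L^1}\le\|\mu_c\|_{\mathscr{M}(\RR^3)}$ and $L^\infty$, one has $t^{\frac32(1-\frac1q)}\|\Ss_2(t)\mu_c\|_{L^q}\le\big(t^{\frac32}\|\Ss_2(t)\mu_c\|_{L^\infty}\big)^{1-\frac1q}\|\mu_c\|_{\mathscr{M}(\RR^3)}^{\frac1q}$, and since $q\ne1$ the exponent $1-\frac1q$ is positive, so it suffices to prove that $t^{3/2}\|\Ss_2(t)\mu_c\|_{L^\infty}\to0$ as $t\to0$.

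To this end I would bound, for a parameter $\delta>0$ and every $x\in\RR^3$,
\begin{equation*}
t^{3/2}|\Ss_2(t)\mu_c(x)|\le(4\pi)^{-3/2}\Big(|\mu_c|\big(B(x,\delta)\big)+e^{-\delta^2/(4t)}\|\mu_c\|_{\mathscr{M}(\RR^3)}\Big),
\end{equation*}
by splitting the defining integral according to $|x-y|<\delta$ or $|x-y|\ge\delta$. The key point — and the only step requiring a genuine argument — is the elementary measure-theoretic fact that a finite non-atomic measure satisfies $\sup_{x\in\RR^3}|\mu_c|\big(B(x,\delta)\big)\to0$ as $\delta\to0$; I would prove this by contradiction via a compactness argument, since a putative sequence of bad balls either concentrates (forcing an atom of $\mu_c$) or escapes to infinity (contradicting finiteness). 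Granting this, fixing $\delta$ small makes the first term uniformly small in $x$, and then letting $t\to0$ kills the second term, giving $\limsup_{t\to0}t^{3/2}\|\Ss_2(t)\mu_c\|_{L^\infty}=0$, as required. Finally, the restriction $q\ne1$ in \eqref{Eq-3D-heat kernel-measure} is genuine: for $q=1$ the exponent vanishes and $\|\Ss_2(t)\mu\|_{L^1}$ does not concentrate on the atomic part as $t\to0$.
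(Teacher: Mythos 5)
Your proof is correct and follows essentially the same route as the paper, which itself gives no details but defers to the argument of Proposition \ref{prop mu-lamda} in \cite{Gallay-Sverak}: Young/Minkowski for the $L^p$--$L^q$ and measure bounds, then the decomposition $\mu=\mu_{pp}+\mu_c$ combined with the uniform vanishing of $\sup_x|\mu_c|(B(x,\delta))$ as $\delta\to0$ for the non-atomic part, exactly as in the Giga--Miyakawa--Osada/Gallay--\v{S}ver\'ak scheme. Nothing is missing; your write-up in fact supplies the details the paper omits.
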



\section{Proof of the main results}\label{data-measure}\label{section proof-main}
\subsection{Median case: Only the initial vorticity is a finite measure} Before stating the proof of the main result, we embark this section by a particular result concerning the global well-posedness topic for \eqref{VD} in the case where the initial density is Lebesgue-integrable and the initial vorticity is a finite measure. The arguments of the proof for this result will be  considered as the platform to proving the Theorem \ref{thm-mesure-2}. 
    
\begin{Theo}\label{Second-Th.}There exist non negative constants $\EE $ and $C$ such that the following hold. Let  $(\omega_0,\rho_{0})\in\mathscr{M}(\Omega)\times L^1(\RR^3)$ with $\rho_0$ axisymmetric and  $\|\omega_{0,pp}\|_{\mathscr{M}(\Omega)}\le\EE$, then, the Boussinesq system \eqref{VD}  admits a unique global axisymmetric mild solution $(\omega_{\theta},\rho)$ satisfying
\begin{equation*}
(\omega_{\theta},\rho)\in C^0\big((0,\infty);L^1(\Omega)\cap L^\infty(\Omega)\big)\times C^0\big([0,\infty);L^1(\RR^3)\big)\cap C^0\big((0,\infty);L^\infty(\RR^3)\big),
\end{equation*}
\begin{equation*}
  r\rho \in C^0\big([0,\infty);L^1(\Omega)\big)\cap C^0\big(L^\infty(0,\infty);L^\infty(\Omega)\big).
\end{equation*}  
Furthermore, for every $p\in (1,\infty]$, we have
\begin{equation*}
 \limsup_{t\rightarrow 0}\; t^{\frac{3}{2}(1-\frac{1}{p})}\| \rho(t) \|_{L^{p} (\RR^3)}=\limsup_{t\rightarrow 0}\; t^{1-\frac{1}{p}}\|r\rho(t)\|_{  L^{p}(\Omega)}=0 
 \end{equation*}
 and
  \begin{equation*}
 \limsup_{t\rightarrow 0}\; t^{1-\frac{1}{p}}\|\omega_{\theta}(t) \|_{L^{p}(\Omega)}\le C\EE .
\end{equation*}
Moreover, we have
\begin{equation*}
  \limsup_{t\rightarrow 0}\;  \|\omega_{\theta}(t)\|_{L^{1}(\Omega) }<\infty, \quad \lim_{t\rightarrow 0}\|\rho(t)-\rho_0 \|_{L^1(\RR^3)} =0
\end{equation*}
and $\omega_{\theta}(t) \rightharpoonup \omega_0 $ as $  t\rightarrow 0. $
\end{Theo}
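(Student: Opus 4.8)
The strategy is the standard Kato-type fixed-point scheme, adapted to the measure setting in the spirit of Gallay--Šverák, with the extra coupling term $-\partial_r\rho$ handled via the already-established $L^1$-theory for the density. I would first set up the integral (mild) formulation: writing $u$ for the velocity reconstructed from $\omega_\theta$ via the axisymmetric Biot--Savart law, the system \eqref{VD} becomes
\begin{equation*}
\omega_\theta(t) = \Ss_1(t)\omega_0 + \int_0^t \Ss_1(t-s)\Big(\tfrac{u^r}{r}\omega_\theta - \Div_\star(u\,\omega_\theta)\Big)(s)\,ds - \int_0^t \Ss_1(t-s)\partial_r\rho(s)\,ds,
\end{equation*}
\begin{equation*}
\rho(t) = \Ss_2(t)\rho_0 + \int_0^t \Ss_2(t-s)\,\Div(u\,\rho)(s)\,ds.
\end{equation*}
I would work in a space $X_T$ built from the norms $\sup_{0<t<T} t^{1-1/p}\|\omega_\theta(t)\|_{L^p(\Omega)}$ for a well-chosen $p\in(1,\infty)$ (e.g. $p$ close to $1$ so the Biot--Savart estimates close), together with analogous weighted norms for $r\rho$ and for $\rho$ in $L^p(\RR^3)$, and the $C^0([0,\infty);L^1)$ norm for $\rho$. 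Since $\rho_0\in L^1(\RR^3)$ genuinely (not merely a measure), the density part already enjoys $\|\Ss_2(t)\rho_0 - \rho_0\|_{L^1}\to 0$ and $t^{3/2(1-1/p)}\|\Ss_2(t)\rho_0\|_{L^p}\to 0$; this is why the density contributions to the $\limsup$ quantities vanish rather than being merely $\le C\EE$.

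The core of the argument is a contraction estimate. Using Proposition \ref{prop mu-lamda} to control $\Ss_1(t)\omega_0$ (which gives the $\le C\|\omega_{0,pp}\|_{\mathscr M(\Omega)}\le C\EE$ bound on $L_p(\omega_0)$), Proposition \ref{P,S1,S2} and Proposition \ref{Prop-W-E-1} for the bilinear and weighted terms, and Proposition \ref{3D-heat kernel} for the density, one estimates each Duhamel term. The bilinear terms are $O(\|\cdot\|_{X_T}^2)$ with a constant independent of $T$ because the time integrals $\int_0^t (t-s)^{-1/2-\cdots}s^{-\cdots}\,ds$ are scale-invariant Beta-type integrals; the forcing term $\int_0^t\Ss_1(t-s)\partial_r\rho\,ds$ is estimated by \eqref{Eq:2-W-est-S1}/\eqref{S(t)div} and is controlled by the density norms. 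Smallness of $\EE$ then makes the map a contraction on a small ball of $X_T$ for $T$ small (indeed for all $T$, after one checks the $L^1(\Omega)$-norm of $\omega_\theta$ does not blow up, using that $\|\Ss_1(t)\mu\|_{L^1}\le\|\mu\|_{\mathscr M(\Omega)}$), yielding existence and uniqueness of a mild solution. The weak convergence $\omega_\theta(t)\rightharpoonup\omega_0$ and $\rho(t)\to\rho_0$ in $L^1$ follow by testing the Duhamel formula against $C_0$ functions and showing the integral terms vanish as $t\to 0$ (their $X_T$-norms are $o(1)$), together with the corresponding property of the linear semigroups.

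I expect two steps to be the main obstacles. The first is the \emph{globalization}: the fixed point only gives a local-in-time solution, and one must show it extends to $(0,\infty)$. The mechanism is that after any $t_0>0$ the solution lies in $L^1\cap L^\infty$ with the decay \eqref{boundedness}, so Theorem \ref{First-Th.} (with the roles of the measure initial data replaced by the genuinely $L^1$ data $(\omega_\theta(t_0),\rho(t_0))$) applies and propagates everything globally; one must check the bootstrap gains enough regularity at positive times, which follows from iterating the $L^p$--$L^q$ smoothing of $\Ss_1,\Ss_2$. The second, more delicate point is giving a \emph{rigorous sense to $r\rho$ near $t=0$}: since $\rho_0\in L^1(\RR^3)$ here the quantity $r\rho_0$ makes sense directly as an element identified through $\widetilde{(\cdot)}$ (Proposition \ref{prop:mu-axi}), so in this ``median'' case the difficulty is milder than in Theorem \ref{thm-mesure-2} — one mainly needs the weighted semigroup bounds of Proposition \ref{Prop-W-E-1} to show $r\rho\in C^0([0,\infty);L^1(\Omega))$ and that $t^{1-1/p}\|r\rho(t)\|_{L^p(\Omega)}\to 0$, which is where the hypothesis $\alpha,\beta\in[-1,2]$ with the admissible exponent range is used. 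Assembling these, together with the uniqueness argument (a Gronwall estimate on the difference of two solutions in $X_T$, using smallness of $\EE$ to absorb the quadratic terms), completes the proof.
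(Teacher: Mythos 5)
Your overall architecture matches the paper's: mild formulation, a Kato-type fixed point in time-weighted Lebesgue spaces with the atomic-part smallness entering through Proposition \ref{prop mu-lamda}, a bootstrap to reach all $p\in(1,\infty]$, and globalization by restarting from some $t_0>0$ where the data is genuinely $L^1$ and invoking Theorem \ref{First-Th.}. Two structural points to fix. First, the paper does not merely track ``weighted norms for $r\rho$'': it promotes $\widetilde{\rho}=r\rho$ to a third unknown solving its own parabolic equation \eqref{tilde-rho}, and runs the contraction on the triple $(\omega_\theta,\widetilde{\rho},\rho)$ in $X_T\times X_T\times Z_T$ with $p=\tfrac43$; this is what closes the quadratic system, since the density bound reads $\|\rho\|_{Z_T}\le\|\rho_{\lin}\|_{Z_T}+C\|\omega_\theta\|_{X_T}\|\widetilde{\rho}\|_{X_T}$. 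Second, your parenthetical claim that the contraction works for all $T$ under only atomic smallness is not correct: arbitrary $T$ requires the \emph{full} norms $\|\omega_0\|_{\mathscr{M}(\Omega)}+\|\rho_0\|_{L^1(\RR^3)}$ to be small; with only $\omega_{0,pp}$ small one gets a local solution and must globalize as you describe.

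The genuine gap is in the convergence to the initial data. You justify it by asserting that the Duhamel integral terms have $X_T$-norms that are $o(1)$ as $T\to0$. This is true for the density contributions (since $\rho_0\in L^1(\RR^3)$ forces $\|\rho\|_{Z_T}\to0$ and $\|\widetilde{\rho}\|_{X_T}\to0$), but it is \emph{false} for the vorticity bilinear term: because $\limsup_{T\to0}\|\omega_\theta\|_{X_T}$ is only $\le C\EE$ when $\omega_{0,pp}\neq0$, the best norm bound on $\int_0^t\Ss_1(t-\tau)\Div_{\star}(v\,\omega_\theta)\,d\tau$ is $O(\EE^2)$, not $o(1)$, so a pure norm estimate cannot identify the limit of $\omega_\theta(t)$. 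The paper's route is: (i) pass to $\widetilde{\Gamma}=\omega_\theta-\tfrac{\widetilde{\rho}}{2}$ to eliminate the $\partial_r\rho$ forcing from the $L^1$ analysis; (ii) write $\widetilde{\Gamma}-\widetilde{\Gamma}_{\lin}=\big(\mathcal{F}(\widetilde{\Gamma}_{\lin})-\mathcal{F}(\widetilde{\Gamma})\big)-\mathcal{F}(\widetilde{\Gamma}_{\lin})$ and invoke the nontrivial Gallay--\v{S}ver\'ak cancellation $\ell_p(\omega_0)=\limsup_{t\to0}t^{1-\frac1p}\|\mathcal{F}(\omega_{\lin})(t)\|_{L^p(\Omega)}=0$; and (iii) absorb via $\delta\le 2\widetilde{C}R\,\delta+\ell_{4/3}(\omega_0)$ with $2\widetilde{C}R<1$ to get $\delta=0$ and hence $\|\omega_\theta(t)-\Ss_1(t)\omega_0\|_{L^1(\Omega)}\to0$. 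This last fact is what yields both $\limsup_{t\to0}\|\omega_\theta(t)\|_{L^1(\Omega)}<\infty$ and $\omega_\theta(t)\rightharpoonup\omega_0$ against all of $C_0(\Omega)$ (a uniform $L^1$ bound near $t=0$ is needed even to upgrade weak convergence from smooth test functions, for which an integration by parts gives $O(t^{1/2})$, to the full class $C_0(\Omega)$). Your proposal is missing ingredient (ii) entirely, and that is the step that cannot be replaced by the size estimates you list.
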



 The proof of Theorem \ref{Second-Th.} will be done in four steps. We begin with the proof of the local well-posedness   for the integral equations \eqref{int-equation} below (Proposition \ref{existence Locale-mu}), where, we cover the limits stated in Theorem \ref{Second-Th.} for $p=\frac{4}{3}$. Then, we provide a self contained proof of the remaining cases of $p$ by a bootstrap argument (Proposition \ref{properties-decay-Lp}). Thereafter, we establish the contunuity in time and the convergence to the initial data in Proposition \ref{Continuity of the solution}. Finally, the globalization the local solution we construct in step one is postponed to the end of the next section.
 
 All in all, along the proof of the three incoming propositions, we will only highlight the big lines of the proof since the idea is the pretty much similar to our previous work  \cite{Hanachi-Houamed-Zerguine}. Nevertheless, we shall provide the details of the crucial new technical issues. \\
 
 First, note that the Boussinesq system \eqref{VD} can be written   in the following form 
\begin{equation}\label{nonlin-prob}
\left\{ \begin{array}{ll} 
\partial_{t}\omega_{\theta}+\text{div}_{\star}(v\omega_{\theta})-\Big(\partial^{2}_{r}+\partial^{2}_{z}+\frac{1}{r}\partial_{r}-\frac{1}{r^2}\Big)\omega_{\theta}=-\partial_r\rho & \\
\partial_{t}\rho+\Div(v\rho)-\Delta \rho=0 &  \\ (\omega_\theta,{\rho})_{| t=0}=({\omega}_0,{\rho}_0).  \end{array} \right.
\end{equation}
Hence, according to the result of \cite{Hanachi-Houamed-Zerguine} the direct treatment of the local well-posedness topic for \eqref{nonlin-prob} in the spirit of \cite{Gallay-Sverak} for initial data $(\omega_{0},\rho_0)$ in the critical space  requires  the introduction of a new quantity $\widetilde{\rho} \triangleq r\rho$. The outcome system of this new unknown is given by the following parabolic equation
\begin{equation}\label{tilde-rho}
\partial_{t}\widetilde{\rho}+\Div_{\star}(v\widetilde{\rho})-\Big(\partial^{2}_{r}+\partial^{2}_{z}+\frac{1}{r}\partial_{r}-\frac{1}{r^2}\Big)\widetilde{\rho}=-2\partial_r\rho.
\end{equation} Thus, we shall consider the following system
\begin{equation}\label{int-equation}
\left\{\begin{array}{ll}
\omega_{\theta}(t)=\Ss_1(t)\omega_{0}-\int_{0}^{t}\Ss_1(t-\tau)\Div_{\star}\big(v(\tau)\omega_{\theta}(\tau)\big)d\tau-\int_{0}^{t}\Ss_1(t-\tau)\partial_{r}\rho(\tau) d\tau &\\~~\\
\widetilde{\rho}(t)=\Ss_1(t)\widetilde{\rho}_{0}-\int_{0}^{t}\Ss_1(t-\tau)\Div_{\star}\big(v(\tau)\widetilde{\rho}(\tau)\big)d\tau-2\int_{0}^{t}\Ss_1(t-\tau)\partial_{r}\rho(\tau) d\tau &\\~~\\
\rho(t)=\Ss_2(t)\rho_0-\int_{0}^{t}\Ss_2(t-\tau)\Div\big(v(\tau)\rho(\tau)\big)d\tau.
\end{array}
\right.
\end{equation}
where $\widetilde{\rho}_{0}=r\rho_0.$  In order to study the above system, we use the Banach spaces. 
\begin{equation*}
X_{T}=\Big\{f\in C^{0}\big((0,T] ,L^{4/3}(\Omega)\big): \Vert f\Vert_{X_{T}}<\infty \Big\},
\end{equation*}
\begin{equation*}
Z_{T}=\Big\{h\in C^{0}\big((0,T] ,L^{4/3}(\RR^3)\big): \Vert h\Vert_{Z_{T}}<\infty \Big\},
\end{equation*}
equipped with the following norms 
\begin{equation*}
\Vert f\Vert _{X_{T}}=\underset{0<t\leq T}{\sup}t^{{1}/{4}}\Vert  f(t)\Vert _{L^{4/3}(\Omega)},\;\Vert h\Vert _{Z_{T}}=\underset{0<t\leq T}{\sup}t^{{3}/{8}}\Vert  h(t)\Vert _{L^{4/3}(\RR^3)}.
\end{equation*}
The local wellposedness of \eqref{int-equation} is then given by the following proposition


\begin{prop}\label{existence Locale-mu} There exist non negative constants $\EE $ and $C$ such that the following hold. Let  $(\omega_0,\rho_{0})\in\mathscr{M}(\Omega)\times L^1(\RR^3)$ with $\rho_0$ axisymmetric and  $\|\omega_{0,pp}\|_{\mathscr{M}(\Omega)}\le\EE$, then, \eqref{int-equation} admits a unique local solution $(\omega_\theta,\widetilde{\rho},\rho)$, defined for all positive  $t\leqslant T =T(\omega_0,\rho_0) $ such that 
\begin{equation}\label{Espace-sol}
(\omega_{\theta},\widetilde{\rho},\rho)\in C\big((0,T];X_T\big)\times C\big((0,T];X_T\big)\times C\big((0,T];Z_T\big). 
\end{equation}

Moreover, if the size of initial data is small enough, the local time of existence $T$ can be taken arbitrarily large. 
\end{prop}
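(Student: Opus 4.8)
The plan is to set up a standard Picard iteration / fixed-point scheme for the coupled integral system \eqref{int-equation} in the product space $X_T \times X_T \times Z_T$, exactly as in \cite{Hanachi-Houamed-Zerguine} for the $L^1$ case, but now tracking carefully the linear parts that involve the rough data $\omega_0 \in \mathscr{M}(\Omega)$ and $\widetilde\rho_0 = r\rho_0$. First I would record that the linear contributions $\Ss_1(t)\omega_0$, $\Ss_1(t)\widetilde\rho_0$ and $\Ss_2(t)\rho_0$ lie in the relevant spaces: the estimate \eqref{Est1-S(t)} of Proposition \ref{prop mu-lamda} with $q=4/3$ gives $\sup_{t>0} t^{1/4}\|\Ss_1(t)\omega_0\|_{L^{4/3}(\Omega)} \le C\|\omega_0\|_{\mathscr{M}(\Omega)}$, so $\Ss_1(\cdot)\omega_0 \in X_T$ for every $T$; the term $\Ss_2(t)\rho_0$ is handled by \eqref{Eq-3D-heat kernel} with $p=1$, $q=4/3$, giving $\Ss_2(\cdot)\rho_0 \in Z_T$; and for $\widetilde\rho_0$ one uses the weighted estimate \eqref{Eq:1-W-est-S1} of Proposition \ref{Prop-W-E-1} with $(\alpha,\beta)=(0,1)$ applied to $\rho_0 \in L^1(\RR^3)$ (equivalently, one works with $\mu_{\rho_0}$ restricted to $\Omega$ à la Proposition \ref{prop:mu-axi}) to control $\|r\,\Ss_1(t)\rho_0\|_{L^{4/3}(\Omega)}$. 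A key point here, and the reason $p=4/3$ is chosen, is that \eqref{Est2-S(t)} gives $\limsup_{t\to 0} t^{1/4}\|\Ss_1(t)\omega_0\|_{L^{4/3}} \le C\|\omega_{0,pp}\|_{\mathscr{M}(\Omega)} \le C\EE$, which is the smallness that will feed the contraction.

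Next I would estimate the bilinear/trilinear terms. The map $\Phi(\omega_\theta,\widetilde\rho,\rho)$ is defined by the right-hand sides of \eqref{int-equation}, where the velocity $v$ is reconstructed from $\omega_\theta$ via the axisymmetric Biot–Savart law; one needs the bound $\|v(\tau)\|_{L^q} \lesssim \|\omega_\theta(\tau)/r\|$-type or $\|\omega_\theta(\tau)\|_{L^s}$-type estimates established in \cite{Hanachi-Houamed-Zerguine,Gallay-Sverak}, so that $v\omega_\theta$, $v\widetilde\rho$, $v\rho$ land in the Lebesgue spaces to which \eqref{S(t)div}, \eqref{S2(t)div} and \eqref{Eq:2-W-est-S1} apply. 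The divergence-form structure produces the extra $t^{-1/2}$ singularity in the kernel, and a direct computation of $\int_0^t (t-\tau)^{-a}\tau^{-b}\,d\tau = C\, t^{1-a-b}$ (Beta-function identity) shows the exponents match so that each bilinear term again has the $X_T$- or $Z_T$-scaling; schematically one obtains
\begin{equation*}
\|\Phi(U) - \Phi(\tilde U)\|_{X_T\times X_T\times Z_T} \le C\big(T^{\eta} + \|U_{\mathrm{lin}}\|\big)\,\|U - \tilde U\|_{X_T\times X_T\times Z_T},
\end{equation*}
where the gain $T^{\eta}$ with $\eta>0$ comes from the $\rho_0 \in L^1$ (not merely measure) hypothesis — this is exactly why $\rho_0$ being genuinely integrable lets the local time $T$ be taken large once the data is small — while the term $\|U_{\mathrm{lin}}\|$ involving the norm of the linear evolution is only small because of the atomic-smallness hypothesis $\|\omega_{0,pp}\|_{\mathscr{M}(\Omega)} \le \EE$. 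Choosing $T$ small (or the data small) makes $\Phi$ a contraction on a ball of $X_T\times X_T\times Z_T$, yielding a unique local fixed point, and the continuity in time with values in the stated spaces follows from the strong continuity of the semigroups (Proposition \ref{P,S1,S2}) together with dominated-convergence arguments on the Duhamel integrals.

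The main obstacle, I expect, is precisely the handling of the $\widetilde\rho = r\rho$ unknown together with the source term $-2\partial_r\rho$ appearing in its equation. Unlike the $L^1$ setting of Theorem \ref{First-Th.}, here the initial datum $\widetilde\rho_0 = r\rho_0$ must be interpreted through the restriction-to-$\Omega$ construction of Proposition \ref{prop:mu-axi} (when $\rho_0$ is a measure; in the present median case $\rho_0\in L^1(\RR^3)$ is integrable so $\widetilde\rho_0 \in L^1(\Omega)$ by the $f_{\widetilde\mu}(r,z) = 2\pi r f_\mu(r,0,z)$ formula, but even then one must check the weighted bounds are uniform), and the coupling term $\partial_r\rho$ forces one to close the estimate for $\omega_\theta$ and $\widetilde\rho$ simultaneously with the one for $\rho$, so the fixed point must genuinely be run on the triple. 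The delicate bookkeeping is to verify that every weight exponent $(\alpha,\beta)$ invoked in \eqref{Eq:1-W-est-S1}–\eqref{Eq:2-W-est-S1} stays in the admissible range $[-1,2]$ (resp. $[-1,1]$) while the time-integrability exponents remain summable; once that is arranged, the scheme closes. I would then note that the sharpened limits ($\limsup_{t\to0} t^{1/4}\|\rho(t)\|_{L^{4/3}} = 0$, etc. — here the right-hand side of \eqref{Est2-S(t)}/\eqref{Eq-3D-heat kernel-measure} vanishes because $\rho_0$ has no atomic part, being in $L^1$) come for free from the same estimates, and the remaining values of $p$ are deferred to Proposition \ref{properties-decay-Lp} via bootstrap, as stated.
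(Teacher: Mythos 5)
Your overall strategy coincides with the paper's: estimate the free parts $(\Ss_1(t)\omega_0,\Ss_1(t)\widetilde{\rho}_0,\Ss_2(t)\rho_0)$ in $X_T\times X_T\times Z_T$ using Propositions \ref{P,S1,S2}, \ref{Prop-W-E-1}, \ref{prop mu-lamda} and \ref{3D-heat kernel}, estimate the nonlinear terms exactly as in \cite{Hanachi-Houamed-Zerguine}, and close a fixed point whose smallness input is $\limsup_{T\to 0}$ of the linear norms, controlled by the atomic parts. Two points in your write-up, however, would not survive being made precise.

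First, there is no gain $T^{\eta}$, $\eta>0$, in the bilinear estimates: the spaces $X_T$, $Z_T$ are scaling-critical, and every Beta-function integral $\int_0^t(t-\tau)^{-a}\tau^{-b}d\tau$ produces \emph{exactly} the power of $t$ needed to land back in $X_T$ or $Z_T$ with a $T$-independent constant (this is also why, in the paper, the bound \eqref{T-qlq} is uniform in $T$ and the "small data $\Rightarrow$ arbitrarily large $T$" alternative holds). The only quantity that becomes small as $T\to0$ is $\Lambda(\omega_0,\widetilde{\rho}_0,\rho_0,T)$, via \eqref{Est2-S(t)} and \eqref{Eq-3D-heat kernel-measure}, where the $\rho_0$-contribution vanishes precisely because $\rho_{0,pp}=0$ for $\rho_0\in L^1(\RR^3)$; attributing a $T^{\eta}$ gain to the integrability of $\rho_0$ misidentifies the mechanism. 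Second, your schematic Lipschitz bound for $\Phi$ on the triple ignores that the coupling $\rho\mapsto\int_0^t\Ss_1(t-\tau)\partial_r\rho(\tau)\,d\tau$ is a \emph{linear} map from $Z_T$ to $X_T$ with $O(1)$ operator norm (no smallness in $T$ or in the data), so $\Phi$ is not directly a contraction on $X_T\times X_T\times Z_T$. The paper resolves this by exploiting the triangular structure: the $\rho$-equation contains no $\partial_r\rho$ source, so one substitutes the bound on $\|\rho\|_{Z_T}$ from the third line of \eqref{omega-XT} into the first two, arriving at the scalar quadratic inequality \eqref{contraction condition} in $\|\omega_\theta\|_{X_T}+\|\widetilde{\rho}\|_{X_T}$ alone, to which the standard $2\widetilde{C}R<1$, $\Lambda\le R/2$ argument applies. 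With these two corrections your plan reproduces the paper's proof.
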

\begin{proof} We closely follow the demonstration of Proposition 3.1 in \cite{Hanachi-Houamed-Zerguine} with minor modifications due to the particularity of initial data. In view of Proposition \ref{P,S1,S2}, Proposition \ref{Prop-W-E-1}  and Proposition \ref{prop mu-lamda}, we have for $T>0$
\begin{equation}\label{initial1}
\sup_{0<t\leq T}t^{{1}/{4}}\|\Ss_1(t)\omega_0\|_{L^{\frac43}(\Omega)}\leq C\|\omega_0\|_{\mathscr{M}(\Omega)}.
\end{equation}
and
\begin{equation}\label{initial2}
\sup_{0<t\leq T}t^{{1}/{4}}\|\Ss_1(t)\widetilde{\rho} _0\|_{L^{\frac43}(\Omega)}\leq C\|r\rho_0\|_{L^1(\Omega)} = C\| \rho_0\|_{L^1(\RR^3)}.
\end{equation}
On the other hand, the fact that
\begin{equation*}
\|\Ss_2(t)\rho_0\|_{L^{\frac43}(\RR^3)}= \|r^\frac{3}{4}\Ss_2(t)\rho_0\|_{L^{\frac43}(\Omega)}
\end{equation*}
together with the first estimate stated in Proposition \ref{Prop-W-E-1}, we further get
\begin{equation}\label{initial3}
\sup_{0<t\leq T}t^{{3}/{8}}\|\Ss_2(t)\rho_0\|_{L^{\frac43}(\RR^3)} \leq  C\|r\rho_0\|_{L^1(\Omega)}=C\|\rho_0\|_{L^1 (\RR^3)}.
\end{equation}
Combining \eqref{initial1}, \eqref{initial2} and \eqref{initial3} to obtain  $\big(\omega_{\lin},\widetilde{\rho}_{\lin},\rho_{\lin}\big)\in\mathscr{X}_T$ with $$(\omega_{\lin}(t),\widetilde{\rho}_{\lin}(t),\rho_{\lin}(t))=(\Ss_1(t)\omega_0,\Ss_1(t)\widetilde{\rho}_0,\Ss_2(t)\rho_0) $$
and 
$$\mathscr{X}_T \triangleq X_T\times X_T\times Z_T $$
 
Now, from \eqref{initial1}, \eqref{initial2} and \eqref{initial3}, we have
\begin{equation}\label{T-qlq}
\Lambda(\omega_0,\widetilde{\rho}_0,\rho_0,T)\triangleq C\|(\omega_\lin,\widetilde{\rho}_\lin,\rho_\lin)\|_{\mathscr{X}_T}\leq C_0\big( \|\omega_0\|_{\mathscr{M}(\Omega)}+\|\rho_0\|_{L^1(\RR^3)}\big) .
\end{equation}
Moreover,  according to \eqref{Est2-S(t)} and \eqref{Eq-3D-heat kernel-measure}, we have\footnote{Remark that since $\rho_0\in L^1(\RR^3)$, then  $\rho_{0,pp}=0.$}
\begin{equation}\label{T-petit}
\limsup_{T\rightarrow 0}\Lambda(\omega_0,\widetilde{\rho}_0,\rho_0,T)\le\EE C
\end{equation}
The estimates of the bilinear terms can be done as in the proof of Proposition 3.1 in \cite{Hanachi-Houamed-Zerguine} . Hence, we get the nonlinear system
 \begin{equation}\label{omega-XT}
 \left\{\begin{array}{ll}
\| \omega_\theta\|_{X_T} \leq  \|\omega_{\lin}\|_{X_T} + C  \| \omega_\theta\|_{X_T}^2 +  C \| \rho\|_{Z_T} \\~~\\
\| \widetilde{\rho}\|_{X_T} \leq   \|\widetilde{\rho}_{\lin}\|_{X_T}  +  C \| \omega_\theta\|_{X_T}\| \widetilde{\rho}\|_{X_T}  +  C \| \rho\|_{Z_T} \\~~\\
\| \rho\|_{Z_T} \leq  \| \rho_{\lin}\|_{Z_T} +  C \| \omega_\theta\|_{X_T}\| \widetilde{\rho}\|_{X_T}.
\end{array}\right.
\end{equation}

for some universal constant $C>0$. By substituting $\|\rho\|_{Z_T}$ in the two first equations of \eqref{omega-XT}, we readily get
\begin{equation}\label{contraction condition}
\| \omega_\theta\|_{X_T}+\| \widetilde{\rho}\|_{X_T} \leq   \Lambda(\omega_{0},\widetilde{\rho}_0, \rho_0,T)+\widetilde{C}\big(\| \omega_\theta\|_{X_T}+ \|\widetilde{\rho}\|_{X_T}\big)^2.
\end{equation}
To complete the contraction argument, let us fix $R>0$ such that $2\widetilde{C}R<1$ and define the ball $$\mathcal{B}_T(R)\triangleq \{(a, b) \in   X_T\times X_T : \, \|( a,b)\|_{X_T\times X_T} <R \},$$ for $(\omega_\theta, \widetilde{\rho})\in \mathcal{B}_T(R)$ the contraction argument is satisfied if $\Lambda(\omega_0,\widetilde{\rho}_0,\rho_0,T)\leq R/2$.  The last requirement can be realized in either case
\begin{enumerate}
\item[{\bf---}]$C_0\big(\|\omega_0\|_{\mathscr{M}(\Omega)}+\|\rho_0\|_{L^1(\RR^3)}\big)\leq R/2$ for any $T>0$, or
\item[{\bf---}]$C\EE\leq R/2$ for $T>0$ is small enough, depending on $\omega_{0,pp}$ and $\rho_0$ (this is possible because $\Lambda(\omega_0,\widetilde{\rho}_0,\rho_0,T)\rightarrow C\EE$ when $T\rightarrow 0$).
\end{enumerate} 
 In other words, we can prove the global well-posedness if the initial data is sufficiently small, or the local well-posedness if only the atomic part $\omega_{0,pp}$ is small. The rigorous construction of the solution can be done by the standard fixed point  schema (see \cite{Hanachi-Houamed-Zerguine}). This concludes the proof of Proposition \ref{existence Locale-mu}.
 \end{proof}
 Remark that the local solution constructed above becomes instantly integrable after time $t>0$. Hence, all the a priori estimates proved in our previous work \cite{Hanachi-Houamed-Zerguine} remains valid for all $t>0$.  However, for the sake of completeness, we provide in the following proposition the precise statement of more properties of our solution. 
 \begin{prop}\label{properties-decay-Lp} Let $(\omega_\theta,\widetilde{\rho},\rho)$ be the solution of \eqref{int-equation} obtained by Proposition \ref{existence Locale-mu} associated to initial data $(\omega_0,\widetilde{\rho}_0,\rho_0)\in \mathscr{M}(\Omega)\times L^1(\Omega)\times L^1(\RR^3)$. Then for any $p\in (1,\infty]$, we have
 \begin{equation}\label{omega-lp}
 \underset{t\rightarrow 0}{\lim}\, t^{(1-\frac{1}{p})} \| \omega_\theta(t) \|_{L^p(\Omega)} \le C\|\omega_{0,pp}\|_{\mathscr{M}(\Omega)},
 \end{equation}
  \begin{equation}\label{rrho-lp}
 \underset{t\rightarrow 0}{\lim}\, t^{(1-\frac{1}{p})} \| \widetilde{\rho}(t) \|_{L^p(\Omega)} = 0,
 \end{equation}
  \begin{equation}\label{rho-lp}
 \underset{t\rightarrow 0}{\lim}\, t^{\frac{3}{2} (1-\frac{1}{p})} \| \rho(t) \|_{L^p(\RR^3)} = 0.
 \end{equation}
 For $p=1$, the above quantities are bounded as $t \rightarrow 0$.  
 \end{prop}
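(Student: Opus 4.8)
The plan is to bootstrap from the $L^{4/3}$-control established in Proposition \ref{existence Locale-mu} up to arbitrary $p\in(1,\infty]$, using the integral equations \eqref{int-equation} together with the semi-group estimates of Propositions \ref{P,S1,S2}, \ref{Prop-W-E-1}, \ref{prop mu-lamda}, and \ref{3D-heat kernel}. First I would introduce, for a given $p$, the weighted norms $\sup_{0<t\le T}t^{1-\frac1p}\|\omega_\theta(t)\|_{L^p(\Omega)}$, $\sup_{0<t\le T}t^{1-\frac1p}\|\widetilde\rho(t)\|_{L^p(\Omega)}$, and $\sup_{0<t\le T}t^{\frac32(1-\frac1p)}\|\rho(t)\|_{L^p(\RR^3)}$, chosen so the linear parts $\Ss_1(t)\omega_0$, $\Ss_1(t)\widetilde\rho_0$, $\Ss_2(t)\rho_0$ are bounded: the first by $C\|\omega_0\|_{\mathscr M(\Omega)}$ via \eqref{Est1-S(t)}, the second by $C\|\rho_0\|_{L^1}$ via \eqref{Eq:1-W-est-S1} with the substitution $\|\Ss_2(t)\rho_0\|_{L^p(\RR^3)}=\|r^{3/p}\Ss_2(t)\rho_0\|_{L^p(\Omega)}$ as already used in \eqref{initial3}, and the third by $C\|\rho_0\|_{L^1(\RR^3)}$ via \eqref{Eq-3D-heat kernel}. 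The key point is that $\limsup_{t\to0}$ of the $\omega_\theta$ linear piece is $\le C\|\omega_{0,pp}\|_{\mathscr M(\Omega)}$ by \eqref{Est2-S(t)}, while the $\widetilde\rho$ and $\rho$ linear pieces have $\limsup_{t\to0}=0$ since $\rho_0\in L^1$ has no atomic part, exactly as observed in \eqref{T-petit}.

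The heart is the bilinear estimates. For the $\omega_\theta$ equation I would estimate $\int_0^t\Ss_1(t-\tau)\Div_\star(v\omega_\theta)\,d\tau$ using \eqref{Eq:2-W-est-S1} (or \eqref{S(t)div}) after writing $v\omega_\theta$ in terms of $\omega_\theta$ and the Biot–Savart kernel — here one uses that $v$ is recovered from $\omega_\theta$ with $\|v(\tau)\|_{L^q}\lesssim$ (an appropriate norm of $\omega_\theta(\tau)$), so $\|v\omega_\theta\|_{L^{p'}}$ is controlled by a product of two weighted $\omega_\theta$-norms at intermediate exponents; the time integral $\int_0^t(t-\tau)^{-1/2-\cdots}\tau^{-\cdots}d\tau$ is a Beta-function integral that converges and reproduces the power $t^{-(1-1/p)}$. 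The term $\int_0^t\Ss_1(t-\tau)\partial_r\rho\,d\tau$ is handled similarly, converting $\partial_r$ into a half-derivative loss via \eqref{S(t)div}-type bounds and using the $\rho$-norm in $L^p(\RR^3)$ together with the identity relating $L^p(\RR^3)$ and weighted $L^p(\Omega)$ norms. The $\widetilde\rho$ and $\rho$ equations are treated in the same spirit, with the coupling handled by substituting the $\|\rho\|$-bound into the first two inequalities as in \eqref{contraction condition}. Running this on a short interval $(0,T]$ where the quadratic constant times $R$ is $<1$, a continuity/bootstrap argument (starting from the already-known $p=4/3$ case and moving up in finitely many steps, or directly by a fixed-point in the new space) gives existence of the weighted bounds and, passing to $\limsup_{t\to0}$, the stated limits; the $p=1$ boundedness follows from $\|\omega_\theta(t)\|_{L^1}\le\|\Ss_1(t)\omega_0\|_{L^1}+\cdots\le C\|\omega_0\|_{\mathscr M(\Omega)}+\cdots$ with the bilinear terms bounded using the already-obtained $L^p$ decay for some $p>1$.

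For $p=\infty$ I would note, as the paper's Remark promises, that one cannot directly close an $L^\infty$ estimate from $L^\infty$ data in the bilinear term because of the borderline time integral; instead one first obtains the estimate for all finite $p$ and then uses the integral equation once more with, say, the $L^p$-decay for a large finite $p$ on the right-hand side and the $L^p\to L^\infty$ smoothing $\|\Ss_i(t-\tau)\Div_\star(\cdot)\|_{L^\infty}\lesssim (t-\tau)^{-1/2-1/p}\|\cdot\|_{L^p}$, which produces a convergent integral and the power $t^{-1}$; the $\limsup$ bound $\le C\|\omega_{0,pp}\|$ is inherited because the nonlinear contributions vanish faster than $t^{-1}$ near $0$. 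The main obstacle I expect is bookkeeping the weights and the range constraints $(\alpha,\beta)\in[-1,2]$ (resp. $[-1,1]$) in Proposition \ref{Prop-W-E-1} so that every application of the weighted smoothing estimate is legitimate while simultaneously keeping all the time-integral exponents strictly less than $1$; this is where the precise choice of intermediate Lebesgue exponents in splitting $\|v\omega_\theta\|$ and $\|v\rho\|$ matters, and it is the only genuinely delicate part — the rest is a routine Beta-integral computation and the fixed-point/continuation scheme already carried out for $p=4/3$ in \cite{Hanachi-Houamed-Zerguine}.
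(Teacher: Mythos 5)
Your overall strategy is the same as the paper's: bootstrap the weighted norms from the $p=4/3$ case of Proposition \ref{existence Locale-mu} through the Duhamel formulas and the $L^p$--$L^q$ smoothing of $\Ss_1,\Ss_2$, with the $\limsup_{t\to0}$ of the linear piece controlled by the atomic part via \eqref{Est2-S(t)} and \eqref{Eq-3D-heat kernel-measure}. The treatment of $p>4/3$ and the idea for $p=\infty$ (feed a large finite exponent into the source so the time integral stays subcritical) match the paper, although for $J_\infty(\rho,T)$ the paper needs one extra ingredient you do not identify: an $L^\infty$ bound on the velocity, $\|v(\tau)\|_{L^\infty}\lesssim\tau^{-1/2}$, obtained by interpolating the Biot--Savart law between two finite Lebesgue exponents, because the near-diagonal part $\int_{t/2}^t\Ss_2(t-\tau)\Div(v\rho)\,d\tau$ forces $v$ into $L^\infty$ and $\rho$ into $L^6$.

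The genuine gap is in your last sentence, the $p=1$ boundedness. You propose to bound $\|\omega_\theta(t)\|_{L^1(\Omega)}$ directly from its own integral equation, but the forcing term $\int_0^t\Ss_1(t-\tau)\partial_r\rho(\tau)\,d\tau$ cannot be closed this way: the smoothing estimates only map $L^p$ into $L^q$ with $q\ge p$, so to land in $L^1(\Omega)$ you must measure the source in a weighted $L^1(\Omega)$ norm, and the only such quantity under control is $\|r\rho(\tau)\|_{L^1(\Omega)}$; applying \eqref{Eq:2-W-est-S1} with $\alpha=0$, $\beta=1$, $p=q=1$ then produces the non-integrable kernel $(t-\tau)^{-1}$. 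The paper circumvents this by passing to the good unknown $\widetilde{\Gamma}=\omega_\theta-\tfrac{1}{2}\widetilde{\rho}$, whose equation carries no $\partial_r\rho$ forcing, proving the $L^1$ bound for $(\widetilde{\Gamma},\rho)$ and recovering $\omega_\theta=\widetilde{\Gamma}+\tfrac{1}{2}\widetilde{\rho}$. Note also that this $L^1$ control is not an optional addendum: it is exactly what permits the interpolation with the $p=4/3$ case that covers the range $p\in(1,4/3)$, which your upward-moving bootstrap does not reach. Without the $\widetilde{\Gamma}$ cancellation (or an equivalent device) both the $p=1$ statement and the low range of $p$ remain unproved.
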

 \begin{Rema}
 As aforementioned, Proposition \ref{properties-decay-Lp} can be proved along the same lines as   the proof of  Proposition 11 in \cite{Hanachi-Houamed-Zerguine}. However, we should mention that the case $p=\infty$ is missing \cite{Hanachi-Houamed-Zerguine}, therefore, we provide below a complementary proof that treats this case as well.
 \end{Rema}
 \begin{proof}
Let us first recall the following notation from \cite{Hanachi-Houamed-Zerguine}
 
  \begin{equation*}
 N_p(f,T) \triangleq \underset{0<t\leq T}{\sup} t^{(1-\frac{1}{p})} \|f \|_{L^p(\Omega)},\quad J_p(f,T) \triangleq \underset{0<t\leq T}{\sup}\;t^{\frac{3}{2} (1-\frac{1}{p})} \|  f \|_{L^p(\RR^3)}.
 \end{equation*}
\begin{equation*}
  M_p (f_0, T) \triangleq  \underset{0<t\leq T}{\sup}\;t^{(1-\frac{1}{p})} \|  \Ss_1(t) f_0  \|_{L^p(\Omega)}, \quad F_p  (f_0, T) \triangleq  \underset{0<t\leq T}{\sup}\;t^{\frac{3}{2} (1-\frac{1}{p})} \|  \Ss_2(t)f_0  \|_{L^p(\RR^3)}.
 \end{equation*}
 According to Proposition \ref{P,S1,S2}, Proposition \ref{Prop-W-E-1}  and Proposition \ref{prop mu-lamda} we find for all $p\in (1,\infty]$
 \begin{equation}\label{data-limit1}
\lim_{t\rightarrow 0} M_p (\omega_0, T) \le C\|\omega_{0,pp}\|_{\mathscr{M}(\Omega)}
 \end{equation}
 and
 \begin{equation}\label{data-limit2}
\lim_{t\rightarrow 0} M_p (\widetilde{\rho}_0, T)=\lim_{t\rightarrow 0}F_p  (\rho_0, T) =0.
 \end{equation}
 
 Thanks to the Proposition \ref{existence Locale-mu}, the result in the case $ p = \frac43 $ is already proved.  By interpolation we find the result for $p\in(1, \frac43 ]$ as long as the $ L^1(\Omega)\times L^1(\RR^3 )-$ norm of $ (\omega_ {\theta}(t), \rho (t))$ remains bounded in a neighborhood of $t=0 $. Let us suppose for a moment that this is true and we get back to prove this claim later.\\
 
Doing so, it remains to prove the result for $p> \frac43$. For this purpose, we can proceed by a bootstrap argument as in the proof of Proposition 11 \cite{Hanachi-Houamed-Zerguine}.

In view of Proposition 3 in \cite{Hanachi-Houamed-Zerguine}, Proposition \ref{P,S1,S2} and Proposition \ref{Prop-W-E-1}, we have
\begin{align*}
\| \omega_\theta(t) \|_{L^p(\Omega)}  \leq \|\Ss_1(t) \omega_0 \|_{L^p(\Omega)}  &+ C \int_0^\frac{t}{2} \frac{\|\omega_\theta \|^2_{L^q(\Omega)}}{ (t-\tau) ^{\frac{2}{q} - \frac{1}{p} }}d\tau + C \int_{ \frac{t}{2}}^t   \frac{\|\omega_\theta(\tau) \|_{L^{q_1}(\Omega)}\|\omega_\theta(\tau) \|_{L^{q_2}(\Omega)}}{ (t-\tau) ^{\frac{1}{q_1} +\frac{1}{q_2} - \frac{1}{p} }}d\tau\\
&+ C \int_0 ^\frac{t}{2} \frac{\| \rho(\tau) \|_{L^\frac{4}{3} (\RR^3)}}{(t-\tau)^{  \frac{13}{8}- \frac{1}{p}} } d\tau + C \int_\frac{t}{2}^t  \frac{\| \rho(\tau) \|_{L^p (\RR^3)}}{(t-\tau)^{ \frac{1}{2} + \frac{1}{2p} }}d\tau.
\end{align*} 

Under the conditions
\begin{equation}\label{bootstrap condition 1}
\frac{1}{2} \leq  \frac{2}{q} - \frac{1}{p} , \quad \frac{1}{2} \leq \frac{1}{q_1} +\frac{1}{q_2} - \frac{1}{p} <1,
\end{equation}
 Thus, straightforward computation yields
 \begin{equation}\label{N_p-omega-inequa}
 N_p(\omega_\theta,T) \leq M_p (\omega_0,T) + C_{p,q}  N_q(\omega_\theta,T)^2+C_{q_1,q_2}  N_{q_1}(\omega_\theta,T) N_{q_2}(\omega_\theta,T) + C_p J_{\frac{4}{3}}(\rho,T)+ C_p J_p(\rho,T).
 \end{equation}
Since $\widetilde{\rho}$ satisfies a similar equation to that of $\omega_{\theta}$, we infer that
\begin{eqnarray} \label{N_p-tilde rho inequa}
 N_p(\widetilde{\rho},T) \leq M_p (\widetilde{\rho}_0,T) &+& C_{p,q}  N_q(\omega_\theta,T)  N_q(\widetilde{\rho} ,T)\nonumber \\
 &+& C_{q_1,q_2}  N_{q_1}(\omega_\theta,T) N_{q_2}(\widetilde{\rho},T) + C_p J_{\frac{4}{3}}(\rho,T)+ C_p J_p(\rho,T).
\end{eqnarray} 
Finally, similar arguments yield\footnote{We refer also to inequality 77 from \cite{Hanachi-Houamed-Zerguine} for more details.} 
 \begin{equation}\label{J_p-inequa}
J_p(\rho,T) \leq F_p (\rho_0,T) + C_{p}  N_\frac{4}{3} (\omega_\theta,T)  N_\frac{4}{3} (\widetilde{\rho} ,T)+C_{q_1,q_2}  N_{q_1}(\omega_\theta,T) N_{q_2}(\widetilde{\rho},T),  
\end{equation}
for any $q_1,q_2 $ such that 
\begin{equation}\label{cond:2}
\frac{1}{q_1}+\frac{1}{q_2}-\frac{3}{2p}<\frac12.
\end{equation}  
Now, by plugging \eqref{J_p-inequa} in \eqref{N_p-omega-inequa} and \eqref{N_p-tilde rho inequa}, we find  for $q=\frac{4}{3}$
\begin{eqnarray*} 
\nonumber N_p(\omega_\theta,T) &\leq& C_{p,q_1,q_2}\big( M_p (\omega_0,T) +F_p (\rho_0,T) +  N_{\frac{4}{3}}(\omega_\theta,T)^2+ N_\frac{4}{3} (\omega_\theta,T)  N_\frac{4}{3} (\widetilde{\rho},T)\\
 &&+J_{\frac{4}{3}}(\rho,T)+N_{q_1}(\omega_\theta,T) N_{q_2}(\omega_\theta,T)+  N_{q_1}(\omega_\theta,T) N_{q_2}(\widetilde{\rho},T) \big),
\end{eqnarray*}
and
\begin{eqnarray*} 
N_p(\widetilde{\rho},T)  \leq C_{p,q_1,q_2}\big( M_p (\widetilde{\rho}_0,T) &+&  F_p (\rho_0,T)+  N_{\frac43}(\omega_\theta,T)  N_{\frac43}(\widetilde{\rho} ,T)\nonumber\\
&+&  J_{\frac{4}{3}}(\rho,T)+ N_{q_1}(\omega_\theta,T) N_{q_2}(\widetilde{\rho},T) \big).
\end{eqnarray*}

 Now, to cover all the range $p\in (\frac{4}{3}, \infty)$, we can proceed by a bootstrap argument as in \cite{Hanachi-Houamed-Zerguine}. The only difference we should point out here is the fact that 
 $\displaystyle\lim_{T\rightarrow 0} M_p (\omega_0,T) $ is not necessary zero, but, it satisfies 
 \begin{equation*}
 \lim_{T\rightarrow 0} M_p (\omega_0,T)  \leqslant C\|\omega_{0,pp}\|_{\mathscr{M}(\Omega)}, \quad \forall p\in (1,\infty].
 \end{equation*}
 Thus, we obtain
\begin{equation*}
\lim_{T\rightarrow 0} N_p(\omega_\theta,T) \leqslant  C\|\omega_{0,pp}\|_{\mathscr{M}(\Omega)},\quad\textnormal{and}\quad \lim_{T\rightarrow 0}N_p(\widetilde{\rho},T)= 0, \quad \text{for all } p\in (1,\infty).
\end{equation*}
Finally, substituting this latest into \eqref{J_p-inequa}, leads to
\begin{equation*}
\lim_{T\rightarrow 0}J_p(\rho,T) = 0, \quad \text{for all } p\in (1,\infty) .
\end{equation*}
In order to treat the case $p=\infty$, we need to avoid some technical issues arising from the restriction \eqref{cond:2}. To this end, we   chose first $q_1=\frac{3}{2}$, $q_2=4$ and $p=\infty$ in \eqref{N_p-omega-inequa} and \eqref{N_p-tilde rho inequa}. Remark that this choice of $(q_1,q_2,p)$ is admissible by the relation \eqref{bootstrap condition 1}, hence, we obtain
\begin{equation}\label{N1}
N_\infty(\omega_\theta,T) \leq    f_1(T) + C J_\infty(\rho,T).
\end{equation}
\begin{equation}\label{N2}
N_\infty(\widetilde{\rho},T) \leq   f_2(T) + C J_\infty(\rho,T),
\end{equation}
with 
$$f_1(T) = M_\infty(\omega_0,T) +CN_\frac43(\omega_\theta,T)^2 + CN_\frac32(\omega_\theta,T)N_4(\omega_\theta,T) +C J_\frac43(\rho,T) \underset{T\rightarrow0}{\leqslant }C\|\omega_{0,pp}\|_{\mathscr{M}(\Omega)}, $$
$$f_2(T) =  M_\infty(\widetilde{\rho}_0,T) +CN_\frac43(\omega_\theta,T)N_\frac43(\widetilde{\rho},T) + CN_\frac32(\omega_\theta,T)N_4(\widetilde{\rho},T) +C J_\frac{4}{3}(\rho,T) \underset{T\rightarrow0}{\longrightarrow 0} . $$
Now, we need to deal with $J_\infty(\rho,T)$. by exploring again the properties of the heat semi-group as in the case $p<\infty$, we infer that
\begin{equation}\label{rho-infiniti-1}
 \norm{\rho(t)}_{L^\infty(\mathbb{R}^3)}\leq \norm {\mathbb{S}_2(t)\rho_0}_{L^\infty(\mathbb{R}^3)}  + C \int_0^\frac{t}{2} \frac{\|\omega_\theta \| _{L^\frac{4}{3} (\Omega)}\|\widetilde{\rho} \| _{L^\frac{4}{3} (\Omega)} }{ (t-\tau) ^{ \frac{1}{2} + \frac{3}{2} }}d\tau + C \int_{ \frac{t}{2}}^t  \frac{\norm{v(\tau)}_{L^\infty(\mathbb{R}^3)}\norm{\rho(\tau) }_{L^q(\mathbb{R}^3)}}{(t-\tau)^{\frac{1}{2}+\frac{3}{2q}}}  d\tau.
 \end{equation}
To assert that the last term on the r.h.s above is finite, we need to chose $q$ such that $\frac{1}{2} + \frac{3}{2q}<1 $. A possible choice is then $q=6$.  On the other hand, remark that due to the Biot-Savart law, we have, for some $1<m<2<\ell<\infty$
\begin{equation*}
\norm {v(\tau)}_{L^\infty(\Omega)}\lesssim \norm{\omega_\theta(\tau)}_{L^m(\Omega)}^\alpha\norm{\omega_\theta(\tau)}_{L^\ell(\Omega)}^{1-\alpha}, \quad \text{ for } \alpha= \frac{m}{2}\frac{\ell-2}{\ell-m} \in (0,1).
\end{equation*}
Moreover, since $m,\ell<\infty$, the previous estimates of $\omega_\theta$ together with a  straightforward computation yield  
\begin{equation}\label{u-infinity estimate}
\norm {v(\tau)}_{L^\infty(\Omega)} \lesssim \tau^{-\frac{1}{2}}.
\end{equation}
Finally, we infer that
\begin{equation*}
t^\frac{3}{2}\norm{\rho(t)}_{L^\infty(\mathbb{R}^3)} \leq    t^\frac{3}{2}\norm {\mathbb{S}_2(t)\rho_0}_{L^\infty(\mathbb{R}^3)}  + C N_\frac{4}{3}(\omega_\theta,T)N_\frac{4}{3}(\widetilde{\rho},T) + J_6(\rho,T)\sup_{\tau\in(0,T)}\Big(\tau ^\frac{1}{2}\norm{v(\tau)}_{L^\infty(\Omega)}\Big).
\end{equation*}
It is easy then to conclude that 
 \begin{equation*}
\lim_{T\rightarrow 0}J_\infty(\rho,T) = 0, 
\end{equation*}
and, therefore, we get from \eqref{N1} and \eqref{N2}
 \begin{equation*}
\lim_{T\rightarrow 0}N_\infty(\omega_\theta,T)\leqslant C\|\omega_{0,pp}\|_{\mathscr{M}(\Omega)},\quad\textnormal{and}\quad \lim_{T\rightarrow 0}N_\infty(\widetilde{\rho},T)=0.
\end{equation*}
This ends the proof of Proposition \ref{properties-decay-Lp} provided that we prove the following claim
\begin{equation*}
\norm{(\omega_\theta(t),\widetilde{\rho}(t),\rho(t))}_{L^1(\Omega)\times L^1(\Omega) \times L^1(\RR^3)} \lesssim \norm{(\omega_0,\rho_0)}_{\mathscr{M}(\Omega) \times L^1(\RR^3)}.
\end{equation*}
From the definition of $\widetilde{\Gamma}$ and the fact $\widetilde{\rho}= r\rho$, the above claim is equivalent to

\begin{equation}\label{estimations-L1-local}
\big\|(\widetilde{\Gamma}(t) ,\rho(t))\|_{L^1(\Omega)  \times L^1(\RR^3)} \lesssim \norm{(\omega_0,\rho_0)}_{\mathscr{M}(\Omega)\times L^1(\RR^3)}.
\end{equation}
Let us then prove that \eqref{estimations-L1-local}. We will restrict ourselves to the estimates of the nonlinear terms since the linear parts can be treated by applying the properties of semi-groups recalled in the previous section. So, according to the equations of $\widetilde{\Gamma}$ and $\rho$, we must show that 

\begin{equation}\label{claim1-local existence}
\int_0^t \norm {\Ss_1 (t-\tau) \Div_{\star}(v \widetilde{\Gamma})(\tau)}_{L^1(\Omega)} d\tau \lesssim \norm{(\omega_0,\rho_0)}_{L^1(\Omega) \times L^1(\RR^3)}
\end{equation}
and 
\begin{equation}\label{claim2-local existence}
\int_0^t \norm {\Ss_2 (t-\tau) \Div(v \rho)(\tau)}_{L^1(\RR^3)} d\tau \lesssim \norm{(\omega_0,\rho_0)}_{L^1(\Omega) \times L^1(\RR^3)}.
\end{equation}
For \eqref{claim1-local existence}, H\"older's inequality, Biot Savart law and the definition of the space $X_T$ lead to
\begin{align*}
\int_0^t \norm {\Ss_1 (t-\tau)\Div_{\star}(v \widetilde{\Gamma})(\tau)}_{L^1(\Omega)} d\tau &\lesssim \int_0^t \frac{1}{(t-\tau)^\frac{1}{2}}\norm {v(\tau)}_{L^4(\Omega)}  \norm {\widetilde{\Gamma}(\tau)}_{L^\frac{4}{3}(\Omega)} d\tau\\
&\lesssim \int_0^t \frac{1}{(t-\tau)^\frac{1}{2}}\norm {\omega_\theta(\tau)}_{L^\frac{4}{3}(\Omega)}  \big\|\widetilde{\Gamma}(\tau)\big\|_{L^\frac{4}{3}(\Omega)} d\tau\\
&\lesssim \int_0^t \frac{1}{(t-\tau)^\frac{1}{2} \tau^{\frac{1}{2}}} d\tau\norm {\omega_\theta }_{X_T}  \big\|\widetilde{\Gamma}\|_{X_T}\\
&\lesssim \norm {\omega_\theta }_{X_T}  \big\|\widetilde{\Gamma}\|_{X_T}.
\end{align*}
To treat  \eqref{claim2-local existence}, we use the fact that
$$\norm {\Ss_2 (t-\tau)\Div(v \rho)(\tau)}_{L^1(\RR^3)} = \norm {r\Ss_2 (t-\tau)\Div(v \rho)(\tau)}_{L^1(\Omega)},$$
then we use first Proposition \ref{Prop-W-E-1} to infer that
\begin{align*}
\int_0^t \norm {r\Ss_2 (t-\tau)\Div(v \rho)(\tau)}_{L^1(\Omega)} d\tau \lesssim \int_0^t \frac{1}{(t-\tau)^\frac{1}{2}}\norm {  v r\rho (\tau)}_{L^1(\Omega)} d\tau.
\end{align*}
Therefore, the identity $\widetilde{\rho}= r \rho$,  H\"older's inequality and the Biot--Savart law yield
\begin{eqnarray*}
\norm {\Ss_2 (t-\tau)\Div(v \rho)(\tau)}_{L^1(\RR^3)} &\lesssim & \int_0^t \frac{1}{(t-\tau)^\frac{1}{2}}\norm {v(\tau)}_{L^4(\Omega)}  \norm {\widetilde{\rho}(\tau)}_{L^\frac{4}{3}(\Omega)} d\tau\\
&\lesssim &\|\omega_\theta\|_{X_T}\|\widetilde{\rho}\|_{X_T}.
\end{eqnarray*}
Hence, the estimates in $X_T $ lead to \eqref{claim1-local existence} and \eqref{claim2-local existence}, thereafter, \eqref{estimations-L1-local} follows.
 \end{proof}
 
\hspace{0.5cm} To complete the proof of the Theorem \ref{Second-Th.}, it remains to outline the proof of the continuity of the solution and the convergence to the initial data. Precisely, we shall prove the following
\begin{prop}\label{Continuity of the solution}
Let $(\omega_0,\rho_0)$ be the initial data to system \eqref{VD} that satisfies the assumptions of  Theorem \ref{Second-Th.}. Let $(\omega_\theta,\rho)$ be the local solution given by the fixed point argument such that
\begin{equation*} 
(\omega_{\theta}, r\rho) \in \Big(  L^\infty\big((0,T);  L^1(\Omega) \cap  L^\infty(\Omega)\big)  \Big) \times \Big(L^\infty\big([0,T);  L^1(\Omega)\big)\cap L^\infty\big((0,T); L^\infty(\Omega)\big) \Big),
\end{equation*}
\begin{equation*} 
 \rho \in L^\infty\big([0,T);  L^1(\RR^3)\big)\cap L^\infty\big((0,T); L^\infty(\RR^3)\big).
\end{equation*}
Then
\begin{equation}\label{result1}
(\omega_{\theta}, r\rho) \in \Big(  C^0\big((0,T);  L^1(\Omega) \cap  L^\infty(\Omega)\big)  \Big) \times \Big(  C^0\big([0,T);  L^1(\Omega)\big)\cap C^0\big((0,T); L^\infty(\Omega)\big) \Big), 
\end{equation}

\begin{equation}\label{result2}
 \rho \in C^0\big([0,T);  L^1(\RR^3)\big)\cap C^0\big((0,T); L^\infty(\RR^3)\big).
\end{equation}
Moreover, we have the following convergence to the initial  
\begin{equation}\label{convergence-initial-vorticity}
\omega_\theta(t) \rightharpoonup \omega_0, \quad \text{ as } t \rightarrow 0 
\end{equation}
and 
\begin{equation}\label{convergence-initial-density}
\lim_{t\rightarrow 0 }\|\rho(t)-\rho_0 \|_{L^1(\RR^3)} =0.
\end{equation}
\end{prop}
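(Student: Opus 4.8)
I would split the statement into two parts: interior continuity on $(0,T)$ in all the norms of \eqref{result1}--\eqref{result2}, and the behaviour at $t=0$, i.e.\ the weak convergence \eqref{convergence-initial-vorticity} and the strong convergence \eqref{convergence-initial-density}; the latter is the genuinely new point. For the interior continuity I would argue as in \cite{Hanachi-Houamed-Zerguine}: fix $t_0\in(0,T)$; by Proposition \ref{properties-decay-Lp} the triplet $(\omega_\theta(t_0),\widetilde{\rho}(t_0),\rho(t_0))$ lies in $(L^1\cap L^\infty)(\Omega)\times(L^1\cap L^\infty)(\Omega)\times(L^1\cap L^\infty)(\RR^3)$ and, by parabolic regularization, is continuous and vanishing at infinity and on $\partial\Omega$. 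Writing \eqref{int-equation} with base point $t_0$ and combining the strong continuity of $(\Ss_1(t))_{t\ge0}$ and $(\Ss_2(t))_{t\ge0}$ on the Lebesgue spaces (and on $C_0$) with the $L^p$--$L^q$ bounds of Propositions \ref{P,S1,S2}--\ref{3D-heat kernel} to dominate the Duhamel integrals, one gets that $t\mapsto(\omega_\theta(t),\widetilde{\rho}(t),\rho(t))$ is continuous into $L^1\cap L^\infty$ on $(0,T)$, which gives \eqref{result1} and the $(0,T)$ part of \eqref{result2}.

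For the behaviour at $t=0$ of the density quantities, recall that $\rho_0\in L^1(\RR^3)$, hence $\widetilde{\rho}_0=r\rho_0\in L^1(\Omega)$, so $\Ss_2(t)\rho_0\to\rho_0$ in $L^1(\RR^3)$ and $\Ss_1(t)\widetilde{\rho}_0\to\widetilde{\rho}_0$ in $L^1(\Omega)$ by strong continuity of the heat semigroups on $L^1$. It then remains to show that the nonlinear terms of the $\widetilde{\rho}$- and $\rho$-equations in \eqref{int-equation} vanish in $L^1$ as $t\to0$. The bilinear ones, by Propositions \ref{P,S1,S2}--\ref{Prop-W-E-1}, Hölder's inequality and the Biot--Savart law, are bounded by $CN_{4/3}(\omega_\theta,T)N_{4/3}(\widetilde{\rho},T)$ (in the notation of the proof of Proposition \ref{properties-decay-Lp}), which tends to $0$ because $N_{4/3}(\widetilde{\rho},T)\to0$ by \eqref{rrho-lp}. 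For $\int_0^t\Ss_1(t-\tau)\partial_r\rho(\tau)\,d\tau$, writing $\partial_r\rho=\Div_\star(\rho,0)$ and applying \eqref{Eq:2-W-est-S1} with $q=1$, $p=\frac{4}{3}$, $\alpha=0$, $\beta=\frac{3}{4}$ (so that $\|r^{3/4}\rho\|_{L^{4/3}(\Omega)}=\|\rho\|_{L^{4/3}(\RR^3)}$) gives the bound $C\int_0^t(t-\tau)^{-5/8}\tau^{-3/8}\,d\tau\,J_{4/3}(\rho,T)=C'J_{4/3}(\rho,T)$, which tends to $0$ by \eqref{rho-lp}. This yields \eqref{convergence-initial-density}, the right continuity at $0$ in \eqref{result2}, and likewise $r\rho(t)\to r\rho_0$ in $L^1(\Omega)$.

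The delicate point is the weak convergence \eqref{convergence-initial-vorticity}. By the $p=1$ part of Proposition \ref{properties-decay-Lp}, $\|\omega_\theta(t)\|_{\mathscr{M}(\Omega)}=\|\omega_\theta(t)\|_{L^1(\Omega)}$ stays bounded near $t=0$, so every sequence $t_n\downarrow0$ has, by Banach--Alaoglu, a subsequence along which $\omega_\theta(t_n)\rightharpoonup\nu$ for some $\nu\in\mathscr{M}(\Omega)$, and it only remains to identify $\nu=\omega_0$. Since $C_c^\infty(\Omega)$ is dense in $C_0(\Omega)$ and the family is bounded in $\mathscr{M}(\Omega)$, it suffices to check $\langle\omega_\theta(t),\varphi\rangle\to\langle\omega_0,\varphi\rangle$ for $\varphi\in C_c^\infty(\Omega)$. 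In the $\omega_\theta$-equation of \eqref{int-equation}, the linear term equals $\langle\omega_0,\Ss_1(t)^*\varphi\rangle$ and $\Ss_1(t)^*\varphi\to\varphi$ uniformly on $\Omega$, using that $\supp\varphi$ is a compact subset of $\Omega$ (hence away from $\{r=0\}$) and the kernel estimates for $\Ss_1$ from \cite{Gallay,Gallay-Sverak}. The bilinear term, after integration by parts, is $\int_0^t\langle v(\tau)\omega_\theta(\tau),\nabla\Ss_1(t-\tau)^*\varphi\rangle\,d\tau$; since $\|\nabla\Ss_1(s)^*\varphi\|_{L^\infty(\Omega)}\le C_{\varphi,T}$ uniformly for $0<s\le T$ and, by Hölder and Biot--Savart, $\|v(\tau)\omega_\theta(\tau)\|_{L^1(\Omega)}\le C\|\omega_\theta(\tau)\|_{L^{4/3}(\Omega)}^2\le C\tau^{-1/2}N_{4/3}(\omega_\theta,T)^2$, this contribution is $O(\sqrt{t})$. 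The forcing term $\int_0^t\Ss_1(t-\tau)\partial_r\rho(\tau)\,d\tau$ has $L^1(\Omega)$-norm $\le C J_{4/3}(\rho,T)$ exactly as in the previous paragraph, so its pairing with $\varphi$ also tends to $0$. Hence $\nu=\omega_0$, and as the limit is independent of the sequence, $\omega_\theta(t)\rightharpoonup\omega_0$ as $t\to0$.

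The main obstacle is this last step: one must control the adjoint semigroup $\Ss_1(t)^*$ acting on test functions in the presence of the singular potential $-1/r^2$, both to obtain $\Ss_1(t)^*\varphi\to\varphi$ uniformly and to obtain $\|\nabla\Ss_1(s)^*\varphi\|_{L^\infty}$ bounded uniformly in $s\le T$; this is where the precise kernel estimates of \cite{Gallay,Gallay-Sverak} enter, and where restricting to $\varphi\in C_c^\infty(\Omega)$ — legitimate thanks to the uniform $\mathscr{M}(\Omega)$-bound on $\omega_\theta(t)$ — is essential, since the $L^1(\Omega)$-norm of the bilinear Duhamel term itself does not vanish as $t\to0$. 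Everything else reduces to bookkeeping of the exponents in Propositions \ref{P,S1,S2}--\ref{3D-heat kernel} together with the vanishing of $N_{4/3}(\widetilde{\rho},T)$ and $J_{4/3}(\rho,T)$ already recorded in Proposition \ref{properties-decay-Lp}.
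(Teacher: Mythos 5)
Your proof is correct in outline, and the first two parts (interior continuity; strong $L^1$ convergence of $\rho$ and $r\rho$) coincide with the paper's argument, including the key observation that the forcing term $\int_0^t\Ss_1(t-\tau)\partial_r\rho\,d\tau$ is controlled in $L^1(\Omega)$ by $J_{4/3}(\rho,t)\to0$, which is available precisely because $\rho_0\in L^1(\RR^3)$ here. Where you genuinely diverge is \eqref{convergence-initial-vorticity}. The paper proves the \emph{stronger} statement $\|\omega_\theta(t)-\Ss_1(t)\omega_0\|_{L^1(\Omega)}\to0$: it removes the $\partial_r\rho$ forcing by passing to $\widetilde{\Gamma}=\omega_\theta-\widetilde{\rho}/2$, splits the bilinear term as $\mathcal{F}(\widetilde{\Gamma})=(\mathcal{F}(\widetilde{\Gamma})-\mathcal{F}(\widetilde{\Gamma}_{\lin}))+\mathcal{F}(\widetilde{\Gamma}_{\lin})$, absorbs the first piece by the smallness $2\widetilde{C}R<1$ of the fixed-point ball, and invokes the nontrivial fact $\ell_p(\omega_0)=\limsup_{t\to0}t^{1-1/p}\|\mathcal{F}(\omega_{\lin})(t)\|_{L^p}=0$ from \cite{Gallay-Sverak,Gallay}; the weak convergence is then read off the explicit kernel of $\Ss_1$. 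You instead prove only the weak-$*$ statement by duality, testing against $\varphi\in C_c^\infty(\Omega)$ and transferring the derivative of the bilinear term onto the test function so that the dangerous $(t-\tau)^{-1/2}$ never appears and the contribution is $O(\sqrt{t})$. This is legitimate (the proposition only claims weak convergence, and your Banach--Alaoglu reduction to a dense class of test functions is sound given the uniform $L^1$ bound), it avoids importing $\ell_p(\omega_0)=0$ altogether, and it is in fact the same duality strategy the paper deploys later for Theorem \ref{Th2:general version}(ii). What the paper's route buys is the strong $L^1$ statement \eqref{claim7}, which is reused elsewhere; what your route buys is a shorter, more self-contained argument.

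The one point you should shore up is the adjoint semigroup step. You correctly flag it, but note that $\Ss_1$ is not self-adjoint for $drdz$ and does \emph{not} commute with $\partial_r$ (the paper makes this explicit before \eqref{Duhamel-2-rrho}), so the identity $\langle\Ss_1(t-\tau)\Div_{\star}(v\omega_\theta),\varphi\rangle=-\langle v\omega_\theta,\nabla\Ss_1(t-\tau)^{*}\varphi\rangle$ and the uniform bound $\sup_{0<s\le T}\|\nabla\Ss_1(s)^{*}\varphi\|_{L^\infty(\Omega)}<\infty$ both require an argument: naively differentiating the Gaussian factor in the kernel costs $s^{-1/2}$, and one must integrate by parts back onto $\varphi$ and control the extra terms coming from the factor $r^{-1/2}\tilde{r}^{1/2}\mathscr{N}_1(t/(r\tilde{r}))$ near $r=0$. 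This can be done with the kernel estimates of \cite{Gallay-Sverak}, but a cleaner fix --- the one the paper itself uses in the proof of Theorem \ref{Th2:general version}(ii) --- is to rewrite the Duhamel term for $\omega_\theta$ in terms of $\Ss_2$, which does commute with $\Div$, at the cost of the extra zero-order terms $\frac{v^r}{r}\omega_\theta$ and $\frac{\omega_\theta}{r^2}$; these are then handled by weighting the test function exactly as in the displayed chain of estimates following \eqref{Duhamel-2-rrho}, using that $\psi/r$ and $\psi/r^2$ are controlled for $\psi=\varphi\circ\mathfrak{F}$ with $\varphi$ vanishing to second order at $r=0$. With that substitution your proof closes completely.
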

\begin{proof}
Assertions \eqref{result1} and \eqref{result2} concern the continuity of the solution away from $0$, this can be done by the same way as in our previous work \cite{Hanachi-Houamed-Zerguine}  since the solution satisfies, for all $t_0\in (0,T]$  
  $$\big(\omega(t_0),\widetilde{\rho}(t_0),\rho(t_0)\big)\in L^p(\Omega)\times L^p(\Omega) \times L^p(\RR^3), \quad \forall p\in [1,\infty].$$ 

Let us now investigate the convergence to the initial data \eqref{convergence-initial-vorticity} and \eqref{convergence-initial-density}. It should be noted that the major difficulty in this part is the weak convergence of the vorticity towards the initial datum.  We begin with the proof of the limit \eqref{convergence-initial-density} which does not differ a lot from the proof  given in \cite{Hanachi-Houamed-Zerguine}.

Indeed, \eqref{convergence-initial-density} is an easy consequence of 
\begin{equation}\label{claim6}
\limsup_{t\rightarrow 0}\| \rho(t)- \mathbb{S}_2(t)\rho_0 \|_{L^1(\RR^3)}=0
\end{equation}
Hence, we should focus on the proof of \eqref{claim6}, for $t>0$. By using Proposition \ref{Prop-W-E-1} for $\alpha= \beta = 1$, we get
\begin{align*}
\| \rho(t)- \mathbb{S}_2(t)\rho_0 \|_{L^1(\RR^3)} &\leq  \int_0^t \| r\mathbb{S}_2(t) \Div(v(\tau) \rho(\tau) ) \|_{L^1(\Omega)}d\tau \\
&\lesssim \int_0^t \frac{1}{(t-\tau)^\frac{1}{2}} \| v(\tau) r\rho(\tau) ) \|_{L^1(\Omega)}d\tau \\
&\lesssim \int_0^t \frac{1}{(t-\tau)^\frac{1}{2}} \| v(\tau) \|_{L^4(\Omega)} \| r\rho(\tau) ) \|_{L^\frac{4}{3} (\Omega)}d\tau.
\end{align*}
Then, Biot--Savart's law yields,
\begin{align*}
\| \rho(t)- \mathbb{S}_2(t)\rho_0 \|_{L^1(\RR^3)}&\lesssim \int_0^t \frac{1}{(t-\tau)^\frac{1}{2}} \| \omega_\theta(\tau) \|_{L^\frac{4}{3} (\Omega)} \| \widetilde{\rho}(\tau) ) \|_{L^\frac{4}{3} (\Omega)}d\tau \\
&\lesssim  \|\omega_\theta \|_{X_T} \|\widetilde{\rho}\|_{X_t}\int_0^t \frac{1}{(t-\tau)^\frac{1}{2} \tau^\frac{1}{2}}\\
&\leq C \|\omega_\theta \|_{X_T} \|\widetilde{\rho}\|_{X_t}.
\end{align*}
Thus \eqref{claim6} follows from the fact that $\displaystyle\lim_{t\rightarrow 0} \|\widetilde{\rho}\|_{X_t} = 0$.\\
We turn   now to prove \eqref{convergence-initial-vorticity} and we follow the idea in \cite{Gallay-Sverak}. We begin by proving the following claim
\begin{equation}\label{claim7}
\limsup_{t\rightarrow 0}\| \omega_\theta(t)- \mathbb{S}_1(t)\omega_0 \|_{L^1(\Omega)}=0
\end{equation}
As mentioned earlier, the linear term of $\partial_r \rho$  is a hurdle in the $L^1$-estimate of $\omega_\theta$. To avoid the estimation of this term, we use the coupling $\widetilde{\Gamma} = \omega_\theta - \frac{\widetilde{\rho}}{2}$. First, note that \eqref{convergence-initial-density} gives

 \begin{equation}\label{L1:rrho}
\limsup_{t\rightarrow 0}\| \widetilde{\rho} (t)- \mathbb{S}_1(t)\widetilde{\rho}_0 \|_{L^1(\Omega)}=0.
\end{equation}
Thus \eqref{claim7} is equivalent to the following assertion
\begin{equation}\label{claim8}
\limsup_{t\rightarrow 0}\| \widetilde{\Gamma}(t)- \mathbb{S}_1(t)\widetilde{\Gamma}_0 \|_{L^1(\Omega)}=0,
\end{equation}
where, $\widetilde{\Gamma}_0 = \omega_0- \frac{\widetilde{\rho}_0}{2} \in \mathscr{M}(\Omega)$. Let us define the functional $\mathcal{F}$ by
\begin{equation*}
(\mathcal{F}g)(t) \triangleq \int_0^t \mathbb{S}_1(t-\tau)\Div_{\star} (v(\tau) g(\tau) ) d\tau.
\end{equation*}
We emphasize that the following estimate holds true, for any $g\in X_T$
\begin{equation}\label{ES:FG}
\| \mathcal{F}g(t)\| _{L^1(\Omega)} + \| \mathcal{F}g\| _{X_t}\leqslant \widetilde{C}\|\omega_\theta \|_{X_t}\|g \|_{X_t}, \quad \forall t\leqslant T.
\end{equation}
The proof of \eqref{ES:FG} can be done by using the estimates of subsection \ref{Semi-group estimates}.
On the other hand, from the equation of $\widetilde{\Gamma}$, we have
\begin{equation}\label{continuity-difference-gamma}
 \widetilde{\Gamma} - \widetilde{\Gamma}_{\lin}  = \big(\mathcal{F}(\widetilde{\Gamma}_{\lin})-\mathcal{F}(\widetilde{\Gamma}) \big) - \mathcal{F}(\widetilde{\Gamma}_{\lin})
\end{equation}
where $\widetilde{\Gamma}_{\lin}(\cdot) = \mathbb{S}_1(\cdot)\widetilde{\Gamma}_0$.
Let $R$ be the radius of the ball in which we applied the fixed-point argument to construct the local solution\footnote{That is to say, $R$ is such that $2\widetilde{C}R<1$,  where $\widetilde{C}>0$ is defined by \eqref{contraction condition}}. Hence, we have
\begin{equation*}
\| \widetilde{\rho}_{\lin} \|_{X_T} + \| \omega_{\lin} \|_{X_T}+ \| \widetilde{\rho} \|_{X_T} + \| \omega_\theta  \|_{X_T} \leq 2 R
\end{equation*}
Let us also define the two quantities $\delta$ and $ \ell_p(\omega_0)$ by 
\begin{equation*}
\delta \triangleq \limsup_{T\rightarrow 0} \| \widetilde{\Gamma}- \widetilde{\Gamma}_{\lin} \|_{ X_T}
\end{equation*}
and 
\begin{equation*}
\ell_p(\omega_0) \triangleq\limsup_{t\rightarrow 0} t^{1-\frac{1}{p}} \|\mathcal{F}(\omega_{\lin})(t) \|_{L^p(\Omega) }, \quad p\in[1,\infty].
\end{equation*}
Note first that, we have $$\limsup_{T\rightarrow 0}\| \widetilde{\rho}- \widetilde{\rho}_{\lin}\|_{ X_T} =\limsup_{T\rightarrow 0}\|\mathcal{F}(\rho_{\lin}) \|_{X_T}=0.$$
Similarly, by a bootstrap argument (Proposition \ref{properties-decay-Lp}), we can prove that
$$ \limsup_{t\rightarrow 0} t^{1-\frac{1}{p}} \|\mathcal{F}(\rho_{\lin}) \|_{L^p(\Omega) }=0, \quad \forall p\in[1,\infty]. $$ 
Thus, by definition of $\widetilde{\Gamma}_{\lin}$ and by linearity of the functional $\mathcal{F}$, we deduce that
\begin{equation}\label{GAMMA-OMEGA}
\limsup_{t\rightarrow 0} t^{1-\frac{1}{p}} \|\mathcal{F}(\widetilde{\Gamma} _{\lin}) \|_{L^p(\Omega) }= \ell_p(\omega_0) , \quad p\in[1,\infty].
\end{equation}
We resume now from \eqref{continuity-difference-gamma}. Note that \eqref{ES:FG} together with \eqref{GAMMA-OMEGA} yield
\begin{align*}
\delta&\leq  \widetilde{C} \limsup_{t\rightarrow 0}\bigg( \| \omega_\theta \|_{X_t} \| \widetilde{\Gamma}-\widetilde{\Gamma}_{\lin} \|_{X_t}\bigg)   + \ell_\frac{4}{3}(\omega_0)\\
&\leq 2\widetilde{C} R \delta+ \ell_\frac{4}{3}(\omega_0).
\end{align*}

We end up with $\delta=0$ because, $\ell_p(\omega_0)=0$, for all $p\in[1,\infty]$ \footnote{See the last part in \cite[Section 4]{Gallay-Sverak} and   \cite[Section 2.3.4]{Gallay} for a detailed proof of the fact $\ell_p(\omega_0)=0$.}, and $2\widetilde{C}R<1$.\\

We are now in position to prove \eqref{claim8}. Again, using \eqref{continuity-difference-gamma} and \eqref{ES:FG} infer that
\begin{align*}
\limsup_{t\rightarrow 0} \| \widetilde{\Gamma}(t)-\widetilde{\Gamma}_{\lin}(t) \|_{L^1(\Omega)}&\leq C \| \omega_\theta  \|_{X_T}\limsup_{t\rightarrow 0}\| \widetilde{\Gamma}-\widetilde{\Gamma}_{\lin} \|_{X_t}+ \ell_1(\omega_0) = 0,
\end{align*}
where, we have used the fact that  $\delta=\ell_1(\omega_0) = 0$.
Consequently, we obtain, in view of \eqref{L1:rrho}
$$
\limsup_{t\rightarrow 0} \| \omega_\theta(t)-\omega _{\lin}(t) \|_{L^1(\Omega)}=0.
$$
This ends the hard part of the proof. To prove the weak limit towards to initial vorticity, we only need to use the fact that\footnote{See  \cite[Section 2]{Hanachi-Houamed-Zerguine}.}
$$
\omega_{\lin}(t,r,z)=\frac{1}{4\pi t}\int_{\Omega}\frac{\tilde{r}^{1/2}}{r^{1/2}}\mathscr{N}_1\Big(\frac{t}{r\tilde{r}}\Big)e^{-\frac{(r-\tilde{r})^2+(z-\tilde{z})^2}{4t}}d\omega_0(\tilde{r},\tilde{z}).
$$
From the above formula, we can check then that $\omega_{\lin} \rightharpoonup \omega_0$ and finally \eqref{convergence-initial-vorticity} follows. The proof of the proposition is then achieved.
\end{proof}
\begin{Rema}
We should point out that, the propositions we proved in this subsection do not say anything about the global well-posedness, it is all about the local theory. However, one can prove that the local solution we construct in Proposition \ref{existence Locale-mu} can be in fact extended to be global in time. We postpone the details of that to the  the last subsection of this paper.
\end{Rema}
 \subsection{Proof of Theorem \ref{thm-mesure-2}: All initial data are finite measures}In this subsection, we shall outline the proof of Theorem \ref{thm-mesure-2}. More precisely, we will focus on the local well-posedness matter then we   give the details for the proof of the global estimates at the end of this section.\\
 
  As we pointed out before, the most challenging part is how to give a rigorous and suitable sense to the initial data of the quantity $r\rho$ if the initial density $\rho_0$ is only a finite measure. Also, it is important to make a choice that does not perturb the continuity of the solution near $t=0$\footnote{Or at least the weak continuity near $t=0$ as we will see later on.}. In the case where $\rho_0$ is an axisymmetric function in $ L^1(\mathbb{R}^3)$, we saw in the proof of Theorem \ref{Second-Th.} that $\widetilde{\rho}_0=r\rho_0 \in L^1({\Omega})$ with 
  \begin{equation*}
  \|\widetilde{\rho}_0 \|_{L^1 (\Omega)} =\frac{1}{2\pi} \|\rho_0 \|_{L^1(\mathbb{R}^3)}.
  \end{equation*}
  Hence, the general case where $\rho_0 $ is axisymmetric measure should fulfill this properties as well. More precisely, if $\rho_0$ is a finite axisymmetric measure  in $\mathscr{M}(\RR^3)$, then,  we should look for a measure $\widetilde{\rho}_0$ in $\mathscr{M}(\Omega)$ that satisfies 
\begin{equation}
\|\widetilde{\rho}_0 \|_{\mathscr{M}(\Omega)} = \frac{1}{2\pi} \|\rho \|_{\mathscr{M}(\RR^3)}.
\end{equation}   The best candidate is then inspired by Proposition \ref{prop:mu-axi}. More precisely,   we shall define  $\widetilde{\rho}_{0}$ as
\begin{equation}\label{rho_0:2:def}
\left\{ \begin{array}{c}
\left\langle \widetilde{\rho}_0, \psi\right\rangle \triangleq  \frac{1}{2\pi} \displaystyle\int_{\RR^3} \phi_\psi d\rho_0,\qquad\forall\psi\in C^0(\Omega)\\~~\\
\phi_\psi(x,y,z) \triangleq \psi(\sqrt{x^2+y^2},z),
\end{array}\right.
\end{equation} 
The factor $\frac{1}{2\pi}$ is added for a compatibility reason\footnote{see identity \eqref{inspiration:EQ} which is why we should define $\widetilde{\rho}_0$ by \eqref{rho_0:2:def}.} and all the results of Proposition \ref{prop:mu-axi} and the remark thereafter hold modulo that factor. \\ 

In the sequel, we denote by $C^\infty_{c,axi}(\RR^3)$ the space of axisymmetric functions $\varphi$ belonging to $C^\infty_{c}(\RR^3)$ and satisfying the following boundary conditions
$$\varphi|_{r=0}= \partial_r\varphi|_{r=0} =0. $$
For such test function $\varphi$,  we also adopt the  identification
 $\varphi\circ \mathfrak{F} \approx \psi $, where $ \mathfrak{F}$  is defined by \eqref{F-r-z-theta}.
Moreover, for simplicity we write
$$\left\langle f,\psi \right\rangle_{\Omega} $$
 instead of 
  $$\left\langle f,\varphi\circ \mathfrak{F}  \right\rangle_{\Omega}, $$
for any distribution $f $ on $\Omega$. Let us consider $\mu $ to be any measure in $\mathcal{M}(\Omega)$ and
we set the  goal of this part to the understanding of the following integral system 
\begin{equation}\label{int-equation-B}
\left\{\begin{array}{ll}
\omega_{\theta}(t)=\Ss_1(t)\omega_{0}-\int_{0}^{t}\Ss_1(t-\tau)\Div_{\star}\big(v(\tau)\omega_{\theta}(\tau)\big)d\tau-\int_{0}^{t}\Ss_1(t-\tau)\partial_{r}\rho(\tau) d\tau, &\\~~\\
\widetilde{\rho}(t)=\Ss_1(t)\mu-\int_{0}^{t}\Ss_1(t-\tau)\Div_{\star}\big(v(\tau)\widetilde{\rho}(\tau)\big)d\tau-2\int_{0}^{t}\Ss_1(t-\tau)\partial_{r}\rho(\tau) d\tau, &\\~~\\
\rho(t)=\Ss_2(t)\rho_0-\int_{0}^{t}\Ss_2(t-\tau)\Div\big(v(\tau)\frac{\widetilde{\rho}}{r} (\tau)\big)d\tau.
\end{array}
\right.
\end{equation}
Above,  for $t>0$ and $(r,z)=(\sqrt{x^2+ y^2},z)$,    we consider the identification of axisymmetric functions $$\omega_\theta=\omega_\theta(t,x,y,z)=\omega_\theta(t,r,z), \quad \rho=\rho(t,x,y,z)=\rho(t,r,z)$$   and $\widetilde{\rho} $ is, for now, an unknown function of the form $\widetilde{\rho}= \widetilde{\rho}(t,r,z)$. 
Remark that the system \eqref{int-equation-B} is equivalent to $\eqref{B(mu,kappa)}$ if $(\omega_\theta,\widetilde{\rho},\rho)$ is regular enough and if $\widetilde{\rho}=r\rho$ and $ \mu=r\rho_0$, at least for integrable initial density. \\

 The following theorem, which is the main result of this section, is a general version of the local results in Theorem \ref{thm-mesure-2}.
\begin{Theo}\label{Th2:general version}
Let $(\omega_0,\rho_0,\mu)$ be in $\mathscr{M}(\Omega) \times \mathscr{M}(\RR^3) \times \mathscr{M}(\Omega) $, such that $\rho_0$ is axisymmetric in the sense of Definition \ref{def:axi:measure}. Then, the following hold

\begin{itemize}
\item[(i)]\textbf{Local well-posedness of \eqref{int-equation-B}.} There exists a non negative constant $\varepsilon$ such that, if 
\begin{equation}\label{petitness:1}
 \norm{\omega_{0,pp} }_{\mathscr{M}(\Omega)}+\norm{\mu_{pp} }_{\mathscr{M}(\Omega)}+\norm{\rho_{0,pp} }_{\mathscr{M}(\RR^3)}  \leq \EE, 
\end{equation}
then, there exists $T=T(\omega_0,\rho_0,\mu)>0$ for which \eqref{int-equation-B} has a unique solution, defined on $ [0,T]$, and satisfying, for all $p\in [1,\infty]$
\begin{equation}\label{Lp-es:Th2:V2}
\sup_{t\in (0,T]}  \; \left\{ t^{1-\frac{1}{p}} \norm{\left(\omega(t),\widetilde{\rho}(t) \right)}_{L^p(\Omega)\times L^p(\Omega) } +  t^{\frac{3}{2}( 1-\frac{1}{p})} \norm{\rho(t)}_{L^p(\RR^3)}\right\}\lesssim  \norm{\left(\omega_{0 } ,\mu \right)}_{\mathscr{M}(\Omega)\times\mathscr{M}(\Omega) } +\norm{\rho_{0 } }_{\mathscr{M}(\RR^3)}   .
\end{equation}  
\item[(ii)] \textbf{Weak convergence to the initial data.} For all $\varphi\in C^\infty_{c,Axi}(\RR^3)$,   we have\footnote{We recall that we are using the identification $\psi \approx \varphi \circ \mathfrak{F}.$}
\begin{equation}
\lim_{t\rightarrow 0}\left\langle \omega_\theta(t) |  \psi  \right\rangle_{\Omega} =\left\langle \omega_0 |  \psi  \right\rangle_{\Omega}  
\end{equation}
\begin{equation}
\lim_{t\rightarrow 0}\left\langle \widetilde{\rho}(t) |  \psi  \right\rangle_{\Omega} =\left\langle\mu |  \psi  \right\rangle_{\Omega}  ,
\end{equation}
\begin{equation}
\lim_{t\rightarrow 0}\left\langle  \rho (t) | \varphi \right\rangle_{\RR^3} =\left\langle  \rho _0 | \varphi \right\rangle_{\RR^3} . 
\end{equation}
\item[(iii)]\textbf{Local well-posedness of the Boussinesq system \eqref{B(mu,kappa)}.} Moreover, if $\mu=\widetilde{\rho}_0$ is given by \eqref{rho_0:2:def}, then the condition on the size of the initial data \eqref{petitness:1} can be replaced by 
\begin{equation}\label{petitness:2}
 \norm{\omega_{0,pp} }_{\mathscr{M}(\Omega)} +\norm{\rho_{0,pp} }_{\mathscr{M}(\RR^3)}  \leq \widetilde{\EE}, 
\end{equation}
for some $\widetilde{\EE}>0$. Also,  we have 
$$\lim_{t\rightarrow 0}\left\langle \widetilde{\rho}(t) |  \psi  \right\rangle_{\Omega} = \lim_{t\rightarrow 0}\left\langle  r\rho (t) | \psi \right\rangle_{\Omega} = \left\langle \widetilde{\rho}_0 |  \psi  \right\rangle_{\Omega}  , \quad \forall \psi \in C^\infty_c(\Omega), $$
$$\widetilde{\rho}(t) = r\rho(t), \quad \forall t>0 $$
and $(\omega_\theta,\rho)$ is actually the unique solution of the Boussinesq system \eqref{B(mu,kappa)} on $[0,T]$.
\end{itemize}
\end{Theo}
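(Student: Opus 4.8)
The strategy is to reduce everything to a fixed-point argument for the integral system \eqref{int-equation-B} in the space $\mathscr{X}_T = X_T\times X_T\times Z_T$ (with the $L^{4/3}$-based norms), exactly as in the proof of Proposition \ref{existence Locale-mu}, and then to upgrade the outcome to the three claims (i)--(iii). For part (i), I would first estimate the linear parts: by Proposition \ref{prop mu-lamda} the terms $\Ss_1(t)\omega_0$ and $\Ss_1(t)\mu$ are controlled in $X_T$ by $\|\omega_0\|_{\mathscr{M}(\Omega)}$ and $\|\mu\|_{\mathscr{M}(\Omega)}$, and by Proposition \ref{3D-heat kernel} together with the identity $\|\Ss_2(t)\rho_0\|_{L^{4/3}(\RR^3)} = \|r^{3/4}\Ss_2(t)\rho_0\|_{L^{4/3}(\Omega)}$ (and Proposition \ref{Prop-W-E-1}) the term $\Ss_2(t)\rho_0$ is controlled in $Z_T$ by $\|\rho_0\|_{\mathscr{M}(\RR^3)}$. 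The new feature compared with Theorem \ref{Second-Th.} is that the limits \eqref{Est2-S(t)} and \eqref{Eq-3D-heat kernel-measure} now only give $\limsup_{T\to 0}\Lambda \le C\EE$ involving the atomic parts of all three data; hence the smallness hypothesis \eqref{petitness:1} is exactly what makes $\Lambda(\omega_0,\mu,\rho_0,T)\le R/2$ for $T$ small, which closes the contraction with $2\widetilde{C}R<1$. The bilinear estimates are literally those of Proposition \ref{existence Locale-mu} (note the third equation now has $\Div(v\,\widetilde\rho/r)$, but since $\widetilde\rho/r$ plays the role that $\rho$ did and the bound used is $\|v\,\widetilde\rho\|_{L^1(\Omega)}\lesssim \|\omega_\theta\|_{L^{4/3}}\|\widetilde\rho\|_{L^{4/3}}$ via Biot--Savart, nothing changes). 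The $L^p$ bounds \eqref{Lp-es:Th2:V2} for all $p\in[1,\infty]$ then follow from the bootstrap argument of Proposition \ref{properties-decay-Lp}, keeping the data terms $M_p,F_p$ on the right-hand side rather than sending them to zero.

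For part (ii), I would mimic the proof of \eqref{convergence-initial-vorticity}--\eqref{convergence-initial-density} in Proposition \ref{Continuity of the solution}, but now \emph{in the weak sense against} $\psi\approx\varphi\circ\mathfrak F$ with $\varphi\in C^\infty_{c,axi}(\RR^3)$, because the data are only measures. The decoupling trick via $\widetilde\Gamma = \omega_\theta - \widetilde\rho/2$ still removes the problematic $\partial_r\rho$ term; one shows $\limsup_{t\to0}\|\widetilde\Gamma(t)-\Ss_1(t)\widetilde\Gamma_0\|$ is negligible in the relevant (now weak) topology, using estimate \eqref{ES:FG} for the functional $\mathcal F$ and the vanishing $\ell_p$-quantities, exactly as before. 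The point is that for $\mu_{\text{lin}}(t)=\Ss_1(t)\mu$ one has $\langle \Ss_1(t)\mu,\psi\rangle_\Omega \to \langle\mu,\psi\rangle_\Omega$ as $t\to0$ for nice test functions (the kernel representation recalled at the end of the proof of Proposition \ref{Continuity of the solution} gives this), and similarly $\langle\Ss_2(t)\rho_0,\varphi\rangle\to\langle\rho_0,\varphi\rangle$ by standard heat-semigroup facts; the nonlinear corrections are $o(1)$ by the $X_T$-smallness. The boundary conditions $\varphi|_{r=0}=\partial_r\varphi|_{r=0}=0$ are needed so that the weighted weak formulation is consistent with the $\Ss_1$ dynamics (which has the $1/r^2$ potential).

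For part (iii), with $\mu=\widetilde\rho_0$ defined by \eqref{rho_0:2:def}, I would show $\widetilde\rho(t)=r\rho(t)$ for all $t>0$ by a uniqueness argument: both $\widetilde\rho$ and $r\rho$ solve the same integral equation (the second line of \eqref{int-equation-B}) with the same initial datum $\widetilde\rho_0 = r\rho_0$ in the appropriate sense, so they coincide in $X_T$; this is where the compatibility factor $\tfrac1{2\pi}$ and Proposition \ref{prop:mu-axi} are used, via identity \eqref{inspiration:EQ}, to guarantee that $r\rho(t)$ has the correct weak limit $\widetilde\rho_0$ as $t\to0$ (so the initial conditions truly match). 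Once $\widetilde\rho=r\rho$, the system \eqref{int-equation-B} becomes \eqref{int-equation}, and the smallness condition reduces to \eqref{petitness:2} because the atomic part of $\widetilde\rho_0$ equals (up to the factor) that of $\rho_0$ by the last part of Proposition \ref{prop:mu-axi}, so the two atomic contributions $\|\omega_{0,pp}\|+\|\mu_{pp}\|+\|\rho_{0,pp}\|$ collapse to a constant times $\|\omega_{0,pp}\|+\|\rho_{0,pp}\|$. Finally, regularity for $t>0$ (instant gain to $L^p$ for all $p$) plus the equivalence of \eqref{int-equation-B} with \eqref{B(mu,kappa)} for regular data yields that $(\omega_\theta,\rho)$ is the unique mild solution of the Boussinesq system on $[0,T]$.

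\textbf{Main obstacle.} The delicate point is part (iii): making rigorous the statement ``$\widetilde\rho(t)=r\rho(t)$'' and ``$r\rho(t)\rightharpoonup\widetilde\rho_0$'' when $\rho_0$ is only a measure, since $r\rho(t)$ is a genuine function only for $t>0$ and its limit as $t\to0$ must be shown to be the measure $\widetilde\rho_0$ built through the push-forward construction of Proposition \ref{prop:mu-axi}; this is precisely why the axisymmetric-measure machinery of Section \ref{section prelim} and the normalization \eqref{rho_0:2:def} are indispensable, and it is the part requiring the most care with the change-of-variables formula \eqref{G-ch-v} and the identity \eqref{identity-axi-omega}.
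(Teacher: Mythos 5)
Your part (i) is exactly the paper's argument (fixed point in $X_T\times X_T\times Z_T$, smallness of the atomic parts via Propositions \ref{prop mu-lamda} and \ref{3D-heat kernel}, then the bootstrap of Proposition \ref{properties-decay-Lp} for \eqref{Lp-es:Th2:V2}), and your part (iii) also matches the paper in substance: one checks via \eqref{inspiration:EQ} that $\sigma=r\rho$ and $\widetilde{\rho}$ solve the same equation with the same (weakly understood) initial datum $\widetilde{\rho}_0$, so their difference solves a heat equation with zero inputs and vanishes; the collapse of \eqref{petitness:1} to \eqref{petitness:2} via $\|\widetilde{\rho}_{0,pp}\|_{\mathscr{M}(\Omega)}=\frac{1}{2\pi}\|\rho_{0,pp}\|_{\mathscr{M}(\RR^3)}$ is as in the paper.

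Part (ii), however, has a genuine gap. You propose to reuse the $\widetilde{\Gamma}=\omega_\theta-\widetilde{\rho}/2$ decoupling and the quantitative scheme of Proposition \ref{Continuity of the solution} ``exactly as before.'' That scheme establishes the limit for $\widetilde{\rho}$ \emph{first} (via $\lim_{t\to0}\|\rho(t)-\Ss_2(t)\rho_0\|_{L^1}=0$, i.e.\ \eqref{claim6} and \eqref{L1:rrho}) and only then uses $\widetilde{\Gamma}$ to transfer the information to $\omega_\theta$. Both steps rest on $\lim_{t\to 0}\|\widetilde{\rho}\|_{X_t}=0$ and $\limsup_{T\to0}\|\mathcal{F}(\rho_{\lin})\|_{X_T}=0$, which hold only because $\rho_{0,pp}=0$ in Theorem \ref{Second-Th.}. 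In the present setting $\mu_{pp}$ and $\rho_{0,pp}$ are merely small, so these quantities are only $O(\EE)$; the inequality $\delta\le 2\widetilde{C}R\,\delta+\ell$ then yields $\delta=O(\EE)$ rather than $\delta=0$, which does not identify the weak limits. Moreover, the $\widetilde{\Gamma}$ trick removes $\partial_r\rho$ only from one combination of the unknowns: to get the separate limits for $\omega_\theta$ and $\widetilde{\rho}$ you must still pass to the limit in $\mathcal{G}(\rho)(t)=\int_0^t\Ss_1(t-\tau)\partial_r\rho(\tau)\,d\tau$, and here $\partial_r$ does \emph{not} commute with $\Ss_1$, so you cannot simply integrate by parts. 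The paper's proof is structured precisely around these two obstructions: it abandons norm convergence altogether, rewrites the Duhamel formula for $\widetilde{\rho}$ in terms of the plain heat semigroup $\Ss_2$ (which does commute with $\Div$), accepting the extra terms $\frac{v^r}{r}\widetilde{\rho}$ and $\frac{\widetilde{\rho}}{r^2}$ in \eqref{Duhamel-2-rrho}, and then tests each nonlinear term against $\psi=\varphi\circ\mathfrak{F}$ with $\varphi\in C^\infty_{c,\axi}(\RR^3)$. Each term is shown to vanish like a positive power of $t$ ($t^{1/2}$, $t^{3/4}$, $t^{1/8}$) using the weighted estimates of Proposition \ref{Prop-W-E-1} together with the finiteness of $\|\nabla\psi\|_{L^\infty}$, $\|\psi/r\|_{L^\infty}$, $\|\psi/r^2\|_{L^4}$, $\|\psi/r^{3/4}\|_{L^4}$ — this is the concrete use of the boundary conditions $\varphi|_{r=0}=\partial_r\varphi|_{r=0}=0$ that your sketch alludes to but does not implement. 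Without this (or an equivalent direct weak-convergence argument for every Duhamel term, including $\mathcal{G}(\rho)$), part (ii) is not proved.
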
 
\begin{proof}We  prove the results of the above theorem in the order given in its statement\\
$\bullet$ \textbf{Proof of (i): Local well-posedness of system \eqref{int-equation-B}.} \\
We have to prove the existence of some $T>0$, and a unique solution $(\omega_\theta,\widetilde{\rho},\rho)\in X_T\times X_T \times Z_T$ to \eqref{int-equation-B}. This can be done by a fixed point argument, more precisely, by following exactly the same idea explored in the proof of Theorem \ref{Second-Th.}. To do so, the free part    $(\Ss_1(t)\omega_0,\Ss_1(t) \mu, \Ss_2(t)\rho_0)$ has to be small enough in $X_T\times X_T \times Z_T$, as $T$ is close to zero and the nonlinear parts have to be estimated by using the properties of the semi-groups stated in the  subsection \ref{Semi-group estimates}. Indeed, by employing the results of the subsection \ref{Semi-group estimates} we can get the same estimates obtained in the proof of \mbox{Proposition \ref{existence Locale-mu}}
\begin{equation}\label{estimates in X_T*X_T*Z_T}
\left\{ \begin{array}{l} 
\| \omega_{\theta} \|_{X_T} \leq \| \Ss_1(\cdot) \omega_0\|_{X_T} + C\| \omega_{\theta} \|_{X_T} ^2 + C \|{\rho} \|_{Z_T}\\~~\\
\| \widetilde{\rho} \|_{X_T} \leq \|\Ss_1(\cdot)  \mu \|_{X_T} + C\| \omega_{\theta} \|_{X_T}\| \widetilde{\rho} \|_{X_T}  + C \|{\rho} \| _{Z_T}\\~~\\
\|  \rho  \|_{Z_T} \leq \|\Ss_2(.) \rho_0 \|_{Z_T}+ C\| \omega_{\theta} \|_{X_T}\| \widetilde{\rho} \|_{X_T},
   \end{array} \right.
\end{equation}
for some universal constant $C>0$. The system \eqref{estimates in X_T*X_T*Z_T} yields then to the following estimate, up to a suitable modification in $C$ 
\begin{equation}\label{poly}
A_T \triangleq \| \omega_{\theta} \|_{X_T}+ \| \widetilde{\rho} \|_{X_T} \leq  A_{0,T} + C A_T^2,
\end{equation}
where, $A_{0,T} $ is given by
\begin{equation}
A_{0,T} \bydef \|\Ss_1(\cdot) \omega_0\|_{X_T}  + \|\Ss_1(\cdot) \mu\|_{X_T} + C \|\Ss_2(.) \rho_0 \|_{Z_T}.
\end{equation} 
The local well-posedness follows then by usual arguments if $\displaystyle\lim_{T\rightarrow 0} A_{0,T}$ is small enough.

Now, in order to measure the size of $ A_{0,T}$ for small $T$, we use Proposition  \ref{prop mu-lamda} and Proposition \ref{3D-heat kernel} to get 
\begin{equation}
\lim_{T\rightarrow 0 }A_{0,T} \leq \norm{\omega_{0,pp} }_{\mathscr{M}(\Omega)}+ \norm{\mu_ {pp} }_{\mathscr{M}(\Omega) }  +C \norm{\rho_{0,pp} }_{\mathscr{M}(\RR^3)},
\end{equation}
which gives, for some $\widetilde{C}>0$
\begin{equation}\label{smalness:kappa}
\lim_{T\rightarrow 0 }A_{0,T} \leq \widetilde{C} \big( \norm{\omega_{0,pp} }_{\mathscr{M}(\Omega)} + \norm{\mu_ { pp} }_{\mathscr{M}(\Omega) }+\norm{\rho_{0,pp} }_{\mathscr{M}(\RR^3)} \big).
\end{equation}  
Thus, if the r.h.s. of the last inequality above is small enough, then the fixed point argument guarantees the local well-posedness of \eqref{int-equation-B}. That is to say, there exist $\EE>0$, such that if 
\begin{equation}\label{smalness:epsilon}
 \norm{\omega_{0,pp} }_{\mathscr{M}(\Omega)}+ \norm{\mu_ {pp} }_{\mathscr{M}(\Omega) }  +\norm{\rho_{0,pp} }_{\mathscr{M}(\RR^3)}  \leq \EE,
\end{equation} 
then there exists $T>0$ for which \eqref{int-equation-B} has a unique solution $(\omega_\theta,\widetilde{\rho},\rho)$ in $X_T\times X_T \times Z_T. $\\
Remark that the fixed point argument gives in particular the estimate \eqref{Lp-es:Th2:V2} for $p=\frac{4}{3}$. The proof of estimate \eqref{Lp-es:Th2:V2} for all $p\in [1,\infty]$ can be done by a Bootstrap argument. The details of that are exactly the same as in the proof of Proposition \ref{properties-decay-Lp}. Assertion (i) is then proved.\\
$\bullet$ \textbf{Proof of (ii): Weak convergence to the initial data.}\\ 
Let us introduce the following linear operators
$$ \mathcal{F}_1(f)(t) = \int_{0}^t \Ss_1(t-\tau) \Div_{\star}(v f)(\tau) d\tau, $$
$$ \mathcal{F}_2(g) (t)= \int_{0}^t \Ss_2(t-\tau) \Div(v g)(\tau) d\tau  $$
and
$$ \mathcal{G}(\rho)(t)= \int_{0}^t\Ss_1(t-\tau) \partial_r \rho(\tau) d\tau,$$
where, $v$ is the velocity associated with the unique solution $(\omega_\theta,\widetilde{\rho},\rho)$ constructed in the previous step.
Hence, our integral system \eqref{int-equation-B} can be rewritten as
 \begin{equation}\label{Duhamel-2}
\left\{ \begin{array}{l} 
 \omega_\theta(t) = \Ss_1(t) \omega_0 - \mathcal{F}_1(\omega_\theta)(t) - \mathcal{G}(\rho)(t)\\~~\\
\widetilde{\rho}(t) = \Ss_1(t) \mu- \mathcal{F}_1(\widetilde{\rho})(t) - 2\mathcal{G}(\rho)(t)\\~~\\
 \rho(t)= \Ss_2(t) \rho_0 - \mathcal{F}_2(\frac{\widetilde{\rho}}{r})(t)
 ,  \end{array} \right.
\end{equation}
First, we point out that, for every $\varphi\in C^\infty_c(\RR^3)$ and $\psi\in C^\infty_c(\Omega)$, we have
\begin{equation*}
\lim_{t\rightarrow 0}\int_{\Omega}\Ss_1(t)\omega_0 \psi drdz = \int_{\Omega}\psi(r,z)  d(\omega_0(r,z)),
\end{equation*}
\begin{equation*}
\lim_{t\rightarrow 0}\int_{\Omega}\Ss_1(t)\mu\psi drdz = \int_{\Omega}  \psi(r,z)   d(\mu(r,z)),
\end{equation*}
\begin{equation*}
\lim_{t\rightarrow 0}\int_{\RR^3}\Ss_2(t)\rho _0 \varphi dx = \int_{\RR^3}\varphi(x) d( \rho_0 (x)).
\end{equation*} 
Thus, in order to prove the convergence to the initial data in \eqref{Duhamel-2}, we need to show that the terms containing the operators $\mathcal{F}_{1}$, $\mathcal{F}_{2}$ and $\mathcal{G}$ tend weakly to zero as $t$ goes to $0$. Let us begin by proving that
\begin{equation}\label{claim-F_2 limit}
\lim_{t\rightarrow 0}\int_{\RR^3}\mathcal{F}_2(\frac{\widetilde{\rho}}{r})(t) \varphi(x) dx = 0.
\end{equation}

 Remark that the operators $\Div$ and $\Ss_2(t)$ commute, whereas an integration by parts followed by the Proposition \ref{Prop-W-E-1} and  Biot--Savart law yield, in view of the notation $\widetilde{\nabla \varphi}  = (\nabla \varphi)\circ\mathfrak{F}$
\begin{align*}
\bigg|\int_{\RR^3}\mathcal{F}_2(\frac{\widetilde{\rho}}{r}) \varphi(x) dx   \bigg|& \lesssim \int_{0}^t \int_\Omega\big| r\Ss_2(t-\tau) (v\frac{\widetilde{\rho}}{r}) \cdot \widetilde{\nabla \varphi}(r,z)  \big|drdz d\tau\\
&\lesssim \int_{0}^t   \| \omega_\theta(\tau) \|_{L^\frac{4}{3}(\Omega)} \|\widetilde{\rho}(\tau)\|_{L^\frac{4}{3}(\Omega)}d\tau \| \widetilde{\nabla \varphi}\|_{L^\infty(\Omega)}\\
&\lesssim \int_0^t\frac{d\tau}{\tau^\frac{1}{2}} \|\omega_\theta\|_{X_T}\| \widetilde{\rho}\|_{X_T}\|\widetilde{\nabla \varphi}\|_{L^\infty(\Omega)}\\
&\lesssim t^\frac{1}{2} \|\omega_\theta\|_{X_T}\| \widetilde{\rho}\|_{X_T}\|\widetilde{\nabla \varphi}\|_{L^\infty(\Omega)},
\end{align*} 
This is enough to guarantee \eqref{claim-F_2 limit}.\\
For the rest of the limits, we will restrict our selves to the ones appearing in the equation of $\widetilde{\rho}$ due to the similarity of the equations of $\widetilde{\rho}$ and $\omega_\theta.$ Let us point out first that the operators $\partial_r$ and $\Ss_1(t)$ do not commute. To overcome this issue, let us rewrite the equation of $\widetilde{\rho}$ in terms of $\Ss_2$.   To do so, owing to the fact that
 $$ \Div_{\star}(v\widetilde{\rho})  = \Div(v\widetilde{\rho}) - \frac{v^r}{r} \widetilde{\rho},$$
 then the equation of   $\widetilde{\rho}$, given by
\begin{equation*}
\partial_t\widetilde{\rho} - \Delta \widetilde{\rho} + \Div_{\star}(v\widetilde{\rho}) + \frac{\widetilde{\rho}}{r^2}= -2\partial_r\rho,
\end{equation*}
can be written in the integral form   as
\begin{align}\label{Duhamel-2-rrho}
\widetilde{\rho}(t)= \Ss_2(t)\mu - \int_0^t \Ss_2(t-\tau)\Div( v\widetilde{\rho}) d\tau &+ \int_0^t \Ss_2(t-\tau) \big( \frac{v^r}{r} \widetilde{\rho}\big) d\tau - \int_0^t \Ss_2(t-\tau)   \frac{\widetilde{\rho}}{r^2}d\tau \nonumber\\&-2 \int_0^t \Ss_2(t-\tau)\partial_r \rho  d\tau .
\end{align} 
Except of the first term on the r.h.s above, all the rest of terms should go to $0$ (in distributional sense) in order to reach our claim. 
Indeed, by using the fact that the operator $\Div$ commutes with $\Ss_2(t)$, Proposition \ref{Prop-W-E-1} and the Biot-Savart law, we obtain
\begin{align*}
\bigg| \int_0^t \int_\Omega\Ss_2(t-\tau)\Div( v\widetilde{\rho}) \psi(r,z) drdzd\tau   \bigg|&\lesssim  \int_{0}^t \int_\Omega\big|\Ss_2(t-\tau) (v \widetilde{\rho} ) \cdot \nabla \psi(r,z)  \big|drdz d\tau\\
&\lesssim \int_{0}^t   \| \omega_\theta(\tau) \|_{L^\frac{4}{3}(\Omega)} \|\widetilde{\rho}(\tau)\|_{L^\frac{4}{3}(\Omega)}d\tau \|\nabla \psi\|_{L^\infty(\Omega)}.
\end{align*}
We continue as in the proof of \eqref{claim-F_2 limit} to obtain 
$$\big| \int_0^t \int_\Omega\Ss_2(t-\tau)\Div( v\widetilde{\rho}) \psi(r,z) drdzd\tau   \big| \lesssim  t^\frac{1}{2} \|\omega_\theta\|_{X_T}\| \widetilde{\rho}\|_{X_T}\|\nabla\psi\|_{L^\infty(\Omega)},$$
which tends to $0$ as $t$ goes to $0$. For the $3^{rd}$ term on the r.h.s of \eqref{Duhamel-2-rrho}, we proceed by using again Proposition \ref{Prop-W-E-1}, and the Biot--Savart law to infer that
\begin{align*}
\bigg| \int_0^t \int_\Omega\Ss_2(t-\tau) \big( \frac{v^r}{r} \widetilde{\rho}\big) \psi(r,z) drdzd\tau   \bigg|& = \bigg| \int_0^t \int_\Omega r \Ss_2(t-\tau) \big( \frac{v^r}{r} \widetilde{\rho}\big) \frac{\psi(r,z)}{r} drdzd\tau   \bigg|\\
&\lesssim \int_{0}^t   \| \omega_\theta(\tau) \|_{L^\frac{4}{3}(\Omega)} \|\widetilde{\rho}(\tau)\|_{L^\frac{4}{3}(\Omega)}d\tau \|\frac{\psi }{r}\|_{L^\infty(\Omega)}.
\end{align*}
Again, we continue as in the proof of \eqref{claim-F_2 limit} to get 
\begin{equation*}
\bigg| \int_0^t \int_\Omega\Ss_2(t-\tau)\big( \frac{v^r}{r} \widetilde{\rho}\big)  \psi(r,z) drdzd\tau   \bigg| \lesssim  t^\frac{1}{2} \|\omega_\theta\|_{X_T}\| \widetilde{\rho}\|_{X_T}\|\frac{\psi}{r}\|_{L^\infty(\Omega)},
\end{equation*}  
which tends to $0$ as $t$ goes to $0$.
For the $4^{th}$ term on the r.h.s of \eqref{Duhamel-2-rrho}, we use again Proposition \ref{Prop-W-E-1} and the Biot--Savart law to find that
\begin{align*}
\bigg| \int_0^t \int_\Omega\Ss_2(t-\tau)\frac{\widetilde{\rho}}{r^2}\psi(r,z) drdzd\tau   \bigg|& = \bigg| \int_0^t \int_\Omega r^2 \Ss_2(t-\tau)\frac{\widetilde{\rho}}{r^2}  \frac{\psi(r,z)}{r^2} drdzd\tau   \bigg|\\
&\lesssim \int_{0}^t\|\widetilde{\rho}(\tau)\|_{L^\frac{4}{3}(\Omega)}d\tau \|\frac{\psi }{r^2}\|_{L^4(\Omega)}\\
&\lesssim \int_{0}^t   \frac{d\tau}{\tau^\frac{1}{4}} \|\widetilde{\rho} \|_{X_T} \|\frac{\psi }{r^2}\|_{L^4(\Omega)}\\
&\lesssim t^\frac{3}{4} \|\widetilde{\rho} \|_{X_T} \|\frac{\psi }{r^2}\|_{L^4(\Omega)},
\end{align*}
which tends to $0$ as $t$ goes to $0$. Finally, for the last term in \eqref{Duhamel-2-rrho}, implementing again Proposition \ref{Prop-W-E-1} and Biot--Savart law yield
\begin{align*}
\bigg| \int_0^t \int_\Omega\Ss_2(t-\tau)\partial_r \rho\psi(r,z) drdzd\tau\bigg|& = \bigg| \int_0^t \int_\Omega r ^\frac{3}{4} \Ss_2(t-\tau)\partial_r \rho \frac{\psi(r,z)}{r^\frac{3}{4}} drdzd\tau   \bigg|\\
&\lesssim \int_{0}^t   \frac{1}{\tau^\frac{1}{2}}  \|r^\frac{3}{4}\rho (\tau)\|_{L^\frac{4}{3}(\Omega)}d\tau \|\frac{\psi }{r^\frac{3}{4} }\|_{L^4(\Omega)}\\
&\lesssim \int_{0}^t   \frac{1}{\tau^\frac{1}{2}}  \| \rho (\tau)\|_{L^\frac{4}{3}(\RR^3)}d\tau \|\frac{\psi }{r^\frac{3}{4} }\|_{L^4(\Omega)}\\
&\lesssim \int_{0}^t   \frac{d\tau}{\tau^\frac{7}{8}} \| \rho \|_{Z_T} \|\frac{\psi }{r^\frac{3}{4}}\|_{L^4(\Omega)}\\
&\lesssim t^\frac{1}{8} \| \rho  \|_{Z_T} \|\frac{\psi }{r^\frac{3}{4} }\|_{L^4(\Omega)},
\end{align*}
which tends to $0$ as $t$ goes to $0$. All in all, we deduce  that $\widetilde{\rho}(t)$ tends to $\mu$ (in distributional sense) as $t$ goes to $0$. Similar arguments can be used to prove that $ \omega_\theta(t)$ tends to $\omega_0$ when $t$ goes to $0$. The details are left to the reader. 
\begin{Rema}\label{remark:test:function}
We should point out that the computations of this step hold true whenever the test function $\psi$ is in $C^1(\Omega)$ such that
 \begin{equation}\label{test:func:condition}
\|\nabla \psi \|_{L^\infty(\Omega)}+ \|\frac{\psi}{r}\|_{L^\infty(\Omega)}+\|\frac{\psi}{r}\|_{L^4(\Omega)}+\|\frac{\psi}{r^2}\|_{L^4(\Omega)} <\infty.
 \end{equation}
Such a condition is automatically satisfied if we take $\psi = \varphi \circ \mathfrak{F}$, for any $\varphi \in C^\infty_{c, Axi}(\RR^3)$. Indeed, the boundary conditions on the test function belonging to $C^\infty_{c, Axi}(\RR^3)$, together with Taylor expansion near $r=0$ would clearly imply \eqref{test:func:condition}.
\end{Rema}

$\bullet$ \textbf{Proof of (iii): Local well-posedness of the Boussinesq system ($\mu=\widetilde{\rho}_0$).}\\
Now, we assume that $\mu $ and $\rho_0$ are connected by the formula \eqref{rho_0:2:def}. That is, we set $\mu=\widetilde{\rho}_0$, where 
\begin{equation}\label{rho_0:2:def:2222}
\left\{ \begin{array}{c}
\left\langle \widetilde{\rho}_0, \psi\right\rangle \triangleq  \frac{1}{2\pi} \displaystyle\int_{\RR^3} \phi_\psi d\rho_0,\quad \forall \psi\in C^0(\Omega),\\~~\\
\phi_\psi(x,y,z) \triangleq \psi(\sqrt{x^2+y^2},z).
\end{array}\right.
\end{equation}   
Proposition \ref{prop:mu-axi} and the remark thereafter yield, for all $\varphi\in C^\infty_{c,Axi}(\RR^3)$
\begin{equation}\label{EQ1:000}
\frac{1}{2\pi} \left\langle \rho_0 | \varphi\right\rangle_{\RR^3} = \left\langle \widetilde{\rho}_0 | \varphi\circ \mathfrak{F} \right\rangle_\Omega=\left\langle \widetilde{\rho}_0 | \psi \right\rangle_\Omega,
\end{equation} 
and 
$$\norm {\widetilde{\rho}_{0,pp}}_{\mathscr{M}(\Omega)} =\frac{1}{2\pi} \norm { \rho_{0,pp} }_{\mathscr{M}(\RR^3)}. $$
This estimate on the size of $\norm {\widetilde{\rho}_{0,pp}}_{\mathscr{M}(\Omega)}$ can be used then in \eqref{smalness:kappa} to obtain, for some $ \widetilde{C}_0>0$
$$ \lim_{T\rightarrow 0 }A_{0,T} \leq  \widetilde{C}_0 \big( \norm{\omega_{0,pp} }_{\mathscr{M}(\Omega)} +\norm{\rho_{0,pp} }_{\mathscr{M}(\RR^3)} \big). $$
 Hence, it is obvious that, up to a modification in $\varepsilon$ given by \eqref{smalness:epsilon}, then \eqref{smalness:epsilon} can be replaced by 
  \begin{equation}\label{EQ3:000}
 \norm{\omega_{0,pp} }_{\mathscr{M}(\Omega)} + \norm{\rho_{0,pp} }_{\mathscr{M}(\RR^3)} \leq \widetilde{\varepsilon}. 
  \end{equation} 
 The local well-posedness of \eqref{int-equation-B} is then guaranteed as long as \eqref{EQ3:000} is satisfied. This ends the proof of the first part of (iii).  Now, since $\rho(t)$ is axisymmetric, belonging to $L^1(\RR^3)$ for all $t>0$, then $r\rho(t)$ belongs to $L^1(\Omega)$ and a change of variables gives 
\begin{equation}\label{EQ2:000}
\frac{1}{2\pi} \left\langle \rho(t) | \varphi \right\rangle_{\RR^3} =\left\langle r\rho(t)| \varphi\circ \mathfrak{F}\right\rangle_{\Omega}. 
\end{equation}

 Hence, the weak limits (as $t$ tends to $0$) proved in the previous step, together with \eqref{EQ1:000} and \eqref{EQ2:000}  yield
 \begin{equation}\label{inspiration:EQ}
  \lim_{t\rightarrow 0} \left\langle r\rho(t)| \varphi\circ \mathfrak{F}\right\rangle_{\Omega}  = \frac{1}{2\pi}\lim_{t\rightarrow 0} \left\langle \rho(t) | \varphi \right\rangle_{\RR^3} = \frac{1}{2\pi}\left\langle \rho_0 | \varphi \right\rangle_{\RR^3} = \left\langle \widetilde{\rho}_0 | \varphi\circ \mathfrak{F} \right\rangle_\Omega.
\end{equation}  
 
Consequently, for all $\psi\in C^\infty_{c }(\Omega)$, we obtain 
\begin{equation}\label{weak:lim:r rho}
 \lim_{t\rightarrow 0} \left\langle r\rho(t)| \psi\right\rangle_{\Omega} =   \left\langle \widetilde{\rho}_0 | \psi\right\rangle_\Omega= \lim_{t \rightarrow 0}\left\langle \widetilde{\rho}(t) | \psi \right\rangle_\Omega. 
\end{equation} 
Moreover, remark that for all $t>0$, the quantity $\sigma(t) \triangleq r\rho(t)$ satisfies the equation 
$$ \partial_t \sigma - \left( \Delta - \frac{1}{r^2}\right)\sigma +  \Div_{\star}(v \widetilde{\rho} ) = - 2 \partial_r \rho.$$
This, together with \eqref{weak:lim:r rho} yield to the following system for $\sigma $
\begin{equation}
\left\{\begin{array}{ll}
\partial_t \sigma - \left( \Delta - \frac{1}{r^2}\right)\sigma +  \Div_{\star}(v \widetilde{\rho} ) = - 2 \partial_r \rho, & (t,r,z)\in \RR^+_*\times \Omega,\\
\sigma_{|_{t=0}} = \widetilde{\rho}_0,
\end{array}\right.
\end{equation}
where the initial condition is to be understood in the weak sense given by \eqref{weak:lim:r rho}.
We recall, on the other hand, that $\widetilde{\rho}$ satisfies the system 
\begin{equation}
\left\{\begin{array}{ll}
\partial_t \widetilde{\rho} - \left( \Delta - \frac{1}{r^2}\right)\widetilde{\rho} +  \Div_{\star}(v \widetilde{\rho} ) = - 2 \partial_r \rho, & (t,r,z)\in \RR^+_*\times \Omega,\\
\widetilde{\rho}_{|_{t=0}} = \widetilde{\rho}_0
\end{array}\right.
\end{equation}
One finds then that the quantity $ \widetilde{\rho}- r\rho $ satisfies a heat equation with zero inputs. It is easy then to deduce that $r\rho(t)= \widetilde{\rho}(t),$ for all $t>0.$\\
We emphasize that this characterization of $\widetilde{\rho}$ would imply that $(\omega_\theta,\rho)$ solves the Boussinesq system. Moreover, all the estimates proved for $\widetilde{\rho}$ hold  for for the quantity $ r\rho.$ 
Theorem \ref{Th2:general version} is then proved.
\end{proof}
\subsection{Global well-posedness }
The results proved in Theorem \ref{Th2:general version} provide information only on the local well-posedness  whenever the initial data $(\omega_0,\rho_0)$ is suitable and lies in $\mathscr{M}(\Omega)\times \mathscr{M} (\RR^3)$. However, one can in fact extend the local solution to be defined for all $t>0$.   Indeed, from the proof of Theorem \ref{Th2:general version}, we   deduce that there exists $t_0\in (0,T)$ such $ (\omega(t_0), \rho(t_0))\in  L^1(\Omega) \times L^1(\RR^3)$. Hence, Theorem \ref{First-Th.} insures the existence of a unique solution of the Boussinesq system with initial data $(\omega(t_0), \rho(t_0))$, denoted for now by $(\bar{\omega},\bar{\rho})$. This solution is defined on $[t_0,\infty)$ and satisfies in particular, for all $p\in [1,\infty]$ 
$$\sup_{t\geqslant t_0} (t-t_0)^{1-\frac{1}{p}} \|(\bar{\omega}(t), r\bar{\rho}(t) )\|_{L^p(\Omega)\times L^p(\Omega)}+\sup_{t\geqslant t_0} (t-t_0)^{\frac{3}{2}(1-\frac{1}{p})} \| \bar{\rho}(t) \|_{L^p(\RR^3)} < \infty. $$
On the other hand, the local solution $(\omega,\rho)$ constructed in Theorem \ref{Th2:general version} satisfies, for all $p\in [1,\infty]$
$$\sup_{t\in [t_0,T]} (t-t_0)^{1-\frac{1}{p}} \|( \omega (t), r \rho (t) )\|_{L^p(\Omega)\times L^p(\Omega)}+\sup_{t\in [t_0,T]} (t-t_0)^{\frac{3}{2}(1-\frac{1}{p})} \|  \rho (t) \|_{L^p(\RR^3)} < \infty. $$
To conclude, we only need to use the above estimates   and repeat the arguments leading to the uniqueness in the proof of  Theorem \ref{Th2:general version} to end up with $(\omega,\rho) \equiv (\bar{\omega},\bar{\rho})$ on $[t_0,T].$ This proves that the local solution is uniquely extendable to a global one.


\end{document}